\newtheorem{defi}{Definition}
\newtheorem{ass}{Assumption}
\newtheorem{thm}{Theorem}
\newtheorem{lem}{Lemma}
\newtheorem{rem}{Remark}
\newtheorem{example}{Example}
\newcommand{\Vf}{V_{\text{f}}}
\DeclareMathOperator*{\argmin}{argmin} 
\begin{document}
\title{Decentralized Real-Time Iterations for\\Distributed NMPC}
\author{Gösta Stomberg, \IEEEmembership{Member, IEEE}, Alexander Engelmann, \IEEEmembership{Member, IEEE}, Moritz Diehl, and \\Timm Faulwasser, \IEEEmembership{Senior Member, IEEE}
\thanks{This work was supported by the German Federal Ministry for Economic Affairs and Climate Action (BMWK) under agreement no. 03EI4043A (Redispatch3.0) and by the Deutsche Forschungsgemeinschaft (DFG, German Research Foundation) - project number 527447339.}
\thanks{GS, AE, and TF were with the Institute of Energy Systems, Energy Efficiency and Energy Economics, TU Dortmund University, 44227 Dortmund, Germany.}
\thanks{GS and TF are now with the Institute of Control Systems, Hamburg University of Technology, 21079 Hamburg, Germany (e-mail: goesta.stomberg@tu-dortmund.de, timm.faulwasser@ieee.org).}
\thanks{AE is now with logarithmo GmbH \& Co. KG, 44227 Dortmund, Germany (e-mail: alexander.engelmann@ieee.org).}
\thanks{MD is with the Department of Microsystems Engineering (IMTEK) and with the Department of Mathematics, University of Freiburg, 79110 Freiburg, Germany (e-mail: moritz.diehl@imtek.uni-freiburg.de).}
\thanks{© 2025 IEEE.  Personal use of this material is permitted.  Permission from IEEE must be obtained for all other uses, in any current or future media, including reprinting/republishing this material for advertising or promotional purposes, creating new collective works, for resale or redistribution to servers or lists, or reuse of any copyrighted component of this work in other works.}
\thanks{This article has been published in the IEEE Transactions on Automatic Control (Early Access). DOI: 10.1109/TAC.2025.3622000.}
}

\maketitle

\begin{abstract}
This article presents a Real-Time Iteration (RTI) scheme for distributed Nonlinear Model Predictive Control (NMPC).
The scheme transfers the well-known RTI approach, a key enabler for many industrial real-time NMPC implementations, to the setting of cooperative distributed control.
At each sampling instant, one outer iteration of a bi-level decentralized Sequential Quadratic Programming~(dSQP) method is applied to a centralized optimal control problem.
This ensures that real-time requirements are met and it facilitates cooperation between subsystems.
Combining novel dSQP convergence results with RTI stability guarantees, we prove local exponential stability under standard assumptions on the MPC design with and without terminal constraints.
The proposed scheme only requires neighbor-to-neighbor communication and avoids a central coordinator.
A numerical example with coupled inverted pendulums demonstrates the efficacy of the approach.
\end{abstract}

\begin{IEEEkeywords}
Nonlinear model predictive control, real-time iterations, decentralized optimization, distributed control, alternating direction method of multipliers
\end{IEEEkeywords}

\allowdisplaybreaks
\section{Introduction}

Distributed control concerns the operation and control of cyber-physical systems, e.g., energy systems~\cite{Venkat2008} or robot formations~\cite{vanParys2017}.
Coupling in systems of systems can occur in the dynamics, via constraints, or through a common objective.
A key challenge for the design of optimization-based schemes for interconnected and cyber-physical systems is to reconcile cooperation with the computational burden, i.e., real-time feasibility is a must in applications.
On the far end of the spectrum, decentralized control schemes do not allow for cooperation but also do not require communication between subsystems~\cite{Siljak1991}.
The opposite is centralized MPC, where the network of subsystems is considered as one, large-scale, dynamical system which is controlled by a single controller.

Model Predictive Control (MPC), also known as receding-horizon optimal control, has seen tremendous industrial success catalyzed by the development of numerical schemes tailored to the dynamics and to the problem formulation.
Of particular importance for the implementation of centralized Nonlinear MPC (NMPC) are Real-Time Iteration (RTI) schemes~\cite{Diehl2002,Zavala2009,Wolf2016,Zanelli2021,Kapernick2014,Darup2019}.
Instead of solving the Optimal Control Problem (OCP) to full accuracy in each control step, an RTI scheme applies only one or a few iterations of an optimization method.

On the other hand, Distributed MPC (DMPC) decomposes the numerical optimization among the subsystems~\cite{Scattolini2009, Muller2017}.
DMPC design proceeds mainly along two dimensions: (i) the OCP formulation and (ii) the implementation of an optimization algorithm with the desired degree of decomposition or decentralization.

With respect to (i), one may design either an individual OCP for each subsystem~\cite{Dunbar2007,Muller2012,Varutti2012}, or a centralized OCP for the whole cyber-physical system~\cite{Conte2016,Bestler2019}.
Schemes with individual OCPs for the subsystems require few communication rounds per control step, enjoy small communication footprints, and allow for fast sampling rates~\cite{Dunbar2007,Muller2012,Varutti2012}. However, they only allow for limited cooperation.
Distributed approaches built upon centralized OCPs generally require multiple communication rounds per control step, but also allow for cooperation as the control tasks of all subsystems are encoded in the centralized OCP.
We therefore refer to the latter approach as cooperative DMPC in this article.

With respect to (ii), numerous optimization algorithms have been proposed to solve centralized OCPs online~\cite{Stewart2011,Summers2012,Hours2016,Stomberg2022}.
A distinction can be made between distributed and decentralized optimization methods.
Distributed optimization splits most computations between the subsystems and there exists a central entity which coordinates the subsystems~\cite{Bertsekas1989}. Decentralized optimization only requires neighbor-to-neighbor communication without a coordinator~\cite{Nedic2018}.

Ultimately, cooperative DMPC aims to combine the high performance of centralized MPC with the favorable communication structure of decentralized optimization.
Real-time requirements dictate that only a finite number of optimizer iterations can be executed in each control step, which limits performance and must be addressed in the stability analysis.
While RTI schemes have been pivotal in bringing NMPC to industrial applications, distributed counterparts for similar results are not available.
For linear systems, DMPC-specific OCP designs are presented in~\cite{Conte2016,Darivianakis2019} and stability under inexact optimization is analyzed in~\cite{Kohler2019,Giselsson2014}.
For nonlinear systems, cooperative DMPC schemes with stability guarantees are presented in~\cite{Stewart2011, Bestler2019}.
In both articles, a centralized OCP with a terminal penalty is formulated such that the OCP value function serves as a candidate Lyapunov function for the closed-loop system.
Stability is ensured in two different ways: Either a feasible-side convergent optimization method is employed and stability follows from standard arguments~\cite{Stewart2011}.
The drawback of this approach is that the optimization method requires a feasible initialization, which is difficult to implement in practice, and that each subsystem needs access to the dynamics of all subsystems in the network.
The second option for guaranteeing stability is to solve the OCP with the Alternating Direction Method of Multipliers (ADMM) until a tailored ADMM stopping criterion is met~\cite{Bestler2019}.
This approach presumes that ADMM converges linearly, i.e., sufficiently fast, to the OCP minimizer. 
However, such ADMM convergence guarantees for problems with non-convex constraint sets are, to the best of the authors' knowledge, yet unavailable.
Moreover, we note that the existing ADMM convergence guarantees for non-convex constraint sets~\cite{Themelis2020,Wang2019} so far do not allow for decentralized implementations.

Consequently, the existing stability guarantees of cooperative DMPC for coupled nonlinear systems either require feasible initialization and global model knowledge in \textit{all} subsystems or they rely on rather strong assumptions on the achieved optimizer convergence.
To overcome these limitations, we propose a novel decentralized RTI scheme for distributed NMPC.
Our scheme builds on a bi-level decentralized Sequential Quadratic Programming (dSQP) scheme~\cite{Stomberg2022a}.
On the outer level, the method uses an inequality-constrained SQP scheme, which leads to partially separable convex Quadratic Programs (QPs) to be solved in each SQP step.
On the inner level, these QPs are solved with ADMM which is guaranteed to converge and can be implemented in decentralized fashion, i.e., it does not require a central coordinator.
Our previous conference paper~\cite{Stomberg2022a} presents an earlier dSQP version including a stopping criterion for ADMM, but it does not consider real-time control applications and stability analysis.
The idea of executing only few iterations of a tailored optimization method to enable distributed NMPC in real-time is also used in~\cite{Hours2016}.
Therein, an augmented Lagrangian-based decomposition scheme is proposed for non-convex OCPs and suboptimality bounds for the optimizer solutions are derived, if changes in the system state between subsequent NMPC steps are small. 
While stability of the dynamical system is not formally discussed in~\cite{Hours2016}, the approach shares commonalities to the scheme developed in this article and we later give a more detailed comparison in Remark~\ref{rem:Hours} in Section~\ref{sec:drti}.

In the present paper, we explore the theoretical foundation of decentralized real-time iterations for DMPC via dSQP.
The real-time feasibility of our approach has successfully been validated in experiments with mobile robots and on embedded hardware~\cite{Stomberg2023,Stomberg2025a} and the computational scalability for large-scale systems is investigated in~\cite{Stomberg2025b}.
Specifically, this article presents two contributions: First, we derive novel dSQP convergence guarantees when the number of inner iterations is fixed instead of relying on an inexact Newton type stopping criterion as in~\cite{Stomberg2022a}.
Second, we combine the linear convergence of dSQP with the RTI stability guarantees from~\cite{Zanelli2021} to derive the local exponential stability of the system-optimizer dynamics in closed loop.
To the best of our knowledge, we are the first to study the system-optimizer convergence in DMPC \textit{and} provide the respective ADMM convergence guarantees.

The article is structured as follows: Section~\ref{sec:problem} states the control objective and presents the OCP.
Section~\ref{sec:rti} recalls RTI stability for centralized NMPC.
Section~\ref{sec:dsqp} explains the bi-level dSQP scheme and derives new q-linear convergence guarantees when the number of inner iterations per outer iteration is fixed.
Section~\ref{sec:drti} presents the stability of the distributed RTI scheme.
Section~\ref{sec:numer} analyzes numericals results for coupled inverted pendulums.

\textit{Notation:} 
Given a matrix $A$ and an integer $j$, $[A]_j$ denotes the $j$th row of $A$. For an index set $\mathcal{A}$, $[A]_\mathcal{A}$ denotes the matrix consisting of rows $ [A]_j$ for all $j \in \mathcal{A}$. Likewise, $[a]_j$ is the $j$th component of vector $a$ and $a_\mathcal{A}$ is the vector of components $[a]_j$ for all $j \in \mathcal{A}$. The concatenation of vectors $x$ and $y$ into a column vector is $(x,y)$.
Given scalars $a_1,\dots,a_n$, $A = \mathrm{diag}(a_1,\dots,a_n) \in \mathbb{R}^{n \times n}$ is the diagonal matrix where $[A]_{ii} = a_i$.
Likewise, given matrices $A_1,\dots,A_S$, $C = \mathrm{diag}(A_1,\dots,A_S)$ is a block diagonal matrix with block $[C]_{ii} = A_i$.
The Euclidean norm of a vector $a \in \mathbb{R}^{n}$ is denoted by $\|a \| \doteq \sqrt{a^\top a}$.
The spectral norm of a matrix $A \in \mathbb{R}^{n \times m}$ is denoted by $\| A \| \doteq \sigma_{\max}(A)$, the largest singular value of $A$.
The closed $\varepsilon$ neighborhood around a point $x^\star \in \mathbb{R}^n$ is denoted as $\mathcal{B}(x^\star,\varepsilon)$, i.e., $\mathcal{B}(x^\star,\varepsilon) \doteq \{x \in \mathbb{R}^{n} | \| x - x^\star \| \leq \varepsilon\}$.
We denote the natural numbers by $\mathbb{N}$, the natural numbers extended by zero by $\mathbb{N}_0 \doteq \{0\} \cup \mathbb{N}$, the set of integers by $\mathbb{I}$, the set of integers in the range from $0$ to $N$ by $\mathbb{I}_{[0,N]}$, and the Minkowski sum of sets $A$ and $B$ as $A \oplus B$.
Given $a \in \mathbb{R}$, $b = \lceil a \rceil$ is the nearest integer $b \geq a$.

\section{Problem Statement}\label{sec:problem}

Consider a network $\mathcal{S} = \{1, \dots, S\}$ of dynamical systems connected by a graph $\mathcal{G} = (\mathcal{S}, \mathcal{E})$, where the edges $\mathcal{E} \subseteq \mathcal{S} \times \mathcal{S}$ couple neighboring subsystems.
We define the set of subsystems which directly influence subsystem $i$ as in-neighbors $\mathcal{N}_i^\text{in} \doteq \{ j \in \mathcal{S} \setminus \{i\} \; | \; (j,i) \in \mathcal{E} \}$.
Similarly, we collect the subsystems which are influenced by subsystem $i$ in $\mathcal{N}_i^\text{out} \doteq \{ j \in \mathcal{S} \setminus \{i\} \; | \; (i,j) \in \mathcal{E}\}$.
We assume each subsystem $i \in \mathcal{S}$ can communicate with its neighbors $\mathcal{N}_i^\text{in}$ and $\mathcal{N}_i^\text{out}$.

We discuss distributed NMPC schemes for setpoint stabilization, where the subsystems cooperatively solve the OCP
\begin{subequations}\label{docp}
	\begin{align}
	\hspace*{-3mm}& \min_{ \bar{\boldsymbol{x}}, \bar{\boldsymbol{u}}}  \sum_{i \in \mathcal{S}} J_i(\bar{\boldsymbol{x}}_i,\bar{\boldsymbol{u}}_i,\bar{\boldsymbol{x}}_{\mathcal{N}_i^\text{in}})\label{docp:obj}\\
	\nonumber \hspace*{-3mm} \text{subject} & \text{ to } \text{for all  } i \in \mathcal S\\
	\hspace*{-3mm} \bar{x}_i(\tau \hspace*{-0.5mm}+\hspace*{-0.5mm}1) &= f_i^\delta(\bar{x}_i(\tau), \bar{u}_i(\tau), \bar{x}_{\mathcal{N}_i^\text{in}}(\tau)), \hspace*{5.5mm} \forall \tau \in \mathbb{I}_{[0,N-1]},\label{docp:sys}\\
	\hspace*{-3mm}\bar{x}_i(0) &= x_i(t),  \\
	\hspace*{-3mm}\bar{x}_i(\tau) &\in \mathbb{X}_i,  \; \forall \tau \in \mathbb{I}_{[0,N]}, \;\, \bar{u}_i(\tau) \in \mathbb{U}_i,  \; \forall \tau \in \mathbb{I}_{[0,N-1]},\\
	\hspace*{-3mm}(\bar{x}_i(\tau),&\bar{x}_j(\tau)) \in \mathbb{X}_{ij}, \hspace*{11.8mm} \forall j \in \mathcal{N}_i^\text{in},  \; \forall \tau \in \mathbb{I}_{[0,N]},\label{docp:xixj}
	\end{align}
\end{subequations}
with objective functions
\begin{equation*}
J_i (\cdot)  \doteq  \sum_{\tau = 0}^{N-1} \ell_i ( \bar{x}_i(\tau), \bar{u}_i(\tau), \bar{x}_{\mathcal{N}_i^\text{in}}(\tau) ) +  \beta V_{\text{f},i}(\bar{x}_i(N)).
\end{equation*}
The state and input of subsystem $i$ are denoted by $x_i \in \mathbb{R}^{n_{x_i}}$ and $u_i \in \mathbb{R}^{n_{u_i}}$, respectively. 
To distinguish closed-loop and open-loop trajectories, we denote predicted states and inputs with a superscript $\bar{\cdot}$.
The decision variables of OCP~\eqref{docp} are the predicted state trajectories $\bar{\boldsymbol{x}}_i$ and input trajectories $\bar{\boldsymbol{u}}_i$ over the horizon $N$.
Define $n_i^\text{in} \doteq \sum_{j \in \mathcal{N}_i^{\text{in}}} n_{x_j}$.
We stack the states of in-neighbors of subsystem $i$ in alphabetical order in the vector ${x}_{\mathcal{N}_i^\text{in}} \in \mathbb{R}^{n_i^\text{in}}$. 
The objective~\eqref{docp:obj} consists of individual stage costs $\ell_i: \mathbb{R}^{n_{x_i}} \times \mathbb{R}^{n_{u_i}} \times \mathbb R^{n_i^\text{in}} \rightarrow \mathbb{R}$, terminal penalties $V_{\text{f},i} : \mathbb{R}^{n_{x_i}} \rightarrow \mathbb{R}$, and a scaling factor $\beta \geq 1$.	 
For all $i \in \mathcal{S}$, $f_i^\delta : \mathbb{R}^{n_{x_i}} \times \mathbb{R}^{n_{u_i}} \times \mathbb{R}^{n_i^{\text{in}}} \rightarrow \mathbb{R}^{n_{x_i}}$ denotes the discrete-time dynamics with control sampling interval $\delta > 0$.
The states and inputs are constrained to the closed sets $\mathbb{X}_i \subseteq \mathbb{R}^{n_{x_i}}$, $\mathbb{U}_i \subseteq \mathbb{R}^{n_{u_i}}$.
Each closed set $\mathbb{X}_{ij} \subseteq \mathbb{R}^{n_{x_i}} \times \mathbb{R}^{n_{x_j}}$ couples two neighbors $i,j \in \mathcal{S}$.
We do not enforce additional terminal constraints to reduce the computational burden~\cite{Limon2006}.

In our stability analysis, we view OCP~\eqref{docp} for the network $\mathcal{S}$ as the following centralized OCP
\begin{subequations}\label{ocp}
	\begin{align}
	V&(x(t)) \doteq \min_{ \bar{\boldsymbol{x}}, \bar{\boldsymbol{u}}} \sum_{\tau = 0}^{N-1} \ell ( \bar{x}(\tau), \bar{u}(\tau) ) +  \beta V_{\text{f}}(\bar{x}(N)) \\
	\nonumber & \text{subject to} \\
	\bar{x}(\tau+1) &= f^\delta(\bar{x}(\tau), \bar{u}(\tau)), \hspace*{17.4mm} \forall \tau \in \mathbb{I}_{[0,N-1]},\label{ocp:sys}\\
	\bar{x}(0) &= x(t),\label{ocp:init} \\
	\bar{x}(\tau) &\in \mathbb{X}, \; \forall \tau \in \mathbb{I}_{[0,N]}, \;\; \bar{u}(\tau) \in \mathbb{U}, \; \forall \tau \in \mathbb{I}_{[0,N-1]}.
	\end{align}
\end{subequations} 
The centralized system state and input are $x = (x_1,\dots,x_S)\in \mathbb{R}^{n_x}$ and $u = (u_1,\dots,u_S) \in \mathbb{R}^{n_u}$, respectively.
The centralized discrete-time dynamics $f^\delta : \mathbb{R}^{n_{x}} \times \mathbb{R}^{n_{u}} \rightarrow \mathbb{R}^{n_{x}}$ are obtained by sampling the corresponding continuous-time dynamics $f^\mathrm{c}: \mathbb{R}^{n_{x}} \times \mathbb{R}^{n_{u}} \rightarrow \mathbb{R}^{n_{x}}$ with piecewise constant input signals at $\delta > 0$.
The partitioning of the centralized system into subsystems affects the performance of DMPC and is studied in~\cite{Chanfreut2021}.

The elements of OCP~\eqref{ocp} are comprised of the components of~\eqref{docp}.
Considering the entire network as a single system of high state dimension serves as a conceptual means in the stability analysis and allows us to draw upon existing RTI stability guarantees.
Notice that the numerical scheme to be proposed subsequently is decentralized, because we solve OCP~\eqref{docp} via dSQP~\cite{Stomberg2022a}.

\section{Centralized Real-Time Iterations}\label{sec:rti}

RTI schemes are designed to ensure the nominal closed-loop properties if only few optimizer iterations are taken in each control step to compute a control input for system~\eqref{ocp:sys}.
This section recalls stability guarantees that also hold when inequality constraints are present in the OCP~\cite{Zanelli2021}.

Let $\mathbb{X}_0 \subseteq \mathbb{R}^{n_x}$ be a closed set with nonempty interior and let OCP~\eqref{ocp} be feasible for all initial states $x(t) \in \mathbb{X}_0$. We define the NMPC control law $\kappa_\mathrm{c} : \mathbb{X}_0 \rightarrow \mathbb{R}^{n_u}$ as the map from the initial state $x(t)$ in~\eqref{ocp:init} to the first part $\bar u^{\star}(0)$ of a globally optimal input trajectory.
We then make the following assumption on the value function $V: \mathbb{X}_0 \rightarrow \mathbb{R}$, which can be met by appropriate OCP design, cf.~\cite{Limon2006}.
OCP designs specific to DMPC are discussed in~\cite{Dunbar2007,Conte2016,Darivianakis2019}.

\begin{ass}[Value function requirements~\cite{Zanelli2021}]\label{ass:Lyapunov} 
	~\\The value function $V: \mathbb{X}_0 \rightarrow \mathbb{R}$ of OCP~\eqref{ocp} is continuous and there exist positive constants $a_1$, $a_2$, $a_3$, and~$\bar{V}$ such that, for all $x \in \mathbb{X}_{\bar{V}} \doteq \{x \in \mathbb{R}^{n_x} \,|\, V(x) \leq \bar{V}\}$,
	\begin{subequations}\label{eq:lyapReq}
		\begin{align}
			a_1 \|x\|^2 \leq V(x) &\leq a_2 \| x \|^2\\
			V(f^\delta(x,\kappa_\mathrm{c}(x))) - V(x) &\leq - \delta \cdot a_3 \| x \|^2.
		\end{align}
	\end{subequations}
	Furthermore, there exists a constant $L_{V,x} > 0$ such that $| \sqrt{V(x)} - \sqrt{V(x')} | \leq L_{V,x}\| x - x' \|$ for all $x,x' \in \mathbb{X}_{\bar{V}}$. Moreover, there exists a constant $\hat{r}_x > 0$ such that $\mathbb{X}_{\bar{V}} \oplus \mathcal{B}(0,\hat{r}_x) \subseteq \mathbb{X}_0$. \hfill $\square$
\end{ass}
\begin{defi}\label{def:pbar}
	Let $\bar{p}: \mathbb{X}_0 \rightarrow \mathbb{R}^{n_p}$ be the map from the current state $x(t)$ in~\eqref{ocp:init} to a globally optimal primal-dual variable $p^\star$ of OCP~\eqref{ocp}. \hfill $\square$
\end{defi}
Note that the control $\bar u^\star(0)$ may be selected from the primal-dual variables $p^\star$ via a suitable matrix $M_{u,p} \in \mathbb{R}^{n_u} \times \mathbb{R}^{n_p}$ with $\|M_{u,p}\| = 1$, i.e., $\bar u^\star(0) = M_{u,p} \bar{p}(x)$.
\begin{ass}[Lipschitz controller~\cite{Zanelli2021}]\label{ass:Lipschitz_z}
	There exists a positive constant
	$L_{p,x}$ such that $\| \bar{p}(x') - \bar{p}(x) \| \leq L_{p,x} \| x' - x\|$ for all $x \in \mathbb{X}_{\bar{V}}$ and for all $x' \in \mathbb{X}_{\bar{V}} \cup \mathcal{B}(x,\hat{r}_x)$. Moreover, $\bar{p}(0) = 0$. \hfill $\square$
\end{ass}
Note that $\bar{p}(x')$ in Assumption~\ref{ass:Lipschitz_z} is well defined for all $x' \in \mathbb{X}_{\bar{V}} \cup \mathcal{B}(x,\hat{r}_x)$, because Assumption~\ref{ass:Lyapunov} ensures that OCP~\eqref{ocp} with initial state $x'$ is feasible. Further
note that $\bar{p}(0) = 0$ if the origin is an equilibrium of the system dynamics which minimizes $\ell$ and $V_\mathrm{f}$, a common design choice in stabilizing NMPC.
Put differently, this assumption does not hold for so-called economic NMPC schemes without further modification.

We continue by introducing the considered RTI scheme and the resulting system-optimizer dynmics.
Let the superscript $\cdot^k$ denote the iteration index of the optimization method which is used to solve OCP~\eqref{ocp}.
At time $t$, the state $x(t)$ is sampled and the optimization method is initialized with the solution from the previous control step, i.e., $p^0(t) = p^{k_\text{max}}(t-1)$, where $k_\text{max} \in \mathbb{N}$ is the number of optimizer iterations per control step.
Then, $k_\text{max}$ optimizer iterations are applied to OCP~\eqref{ocp}, a primal-dual iterate $p^{k_\text{max}}(t)$ is obtained, and the control input is selected, i.e. $u(t) = M_{u,p} p^{k_\mathrm{max}}(t)$.
The control input is applied to the system and the primal-dual iterates are stored for the next time step.
The optimization method and the system together form the system-optimizer dynamics~\cite{Zanelli2021}
\begin{equation}\label{eq:sysoptdyn}
\begin{bmatrix} x(t+1) \\ p^{k_\mathrm{max}}(t+1) \end{bmatrix} = \begin{bmatrix} f^\delta(x(t),M_{u,p}p^{k_{\mathrm{max}}}(t))  \\ \Phi(x(t),p^{k_\text{max}}(t)) \end{bmatrix},
\end{equation} where $\Phi: \mathbb{R}^{n_x} \times \mathbb{R}^{n_p} \rightarrow \mathbb{R}^{n_p}$ maps the OCP initial state $x(t)$ and approximate solution $p^{k_\text{max}}(t)$ to the approximate solution $p^{k_\text{max}}(t+1)$ at the next time step.
We tailor~\cite[Ass. 10]{Zanelli2021} by assuming optimizer convergence to the current OCP solution~$\bar{p}(x(t))$ as follows.
\begin{ass}[Q-linear optimizer convergence]\label{ass:qlin}
	There exist positive constants $\hat{r}_p>0$ and $a_p < 1$ such that, for all $x(t) \in \mathbb{X}_{\bar{V}}$ and all $p^0(t) \in \mathcal{B}(\bar{p}(x(t)),\hat{r}_p)$, the sequence $\{p^k(t)\}$ of optimizer iterates satisfies
	\begin{align*}
	\left\| p^{k+1}(t) - \bar{p}(x(t)) \right\| \leq a_p \left\| p^k(t) - \bar{p}(x(t)) \right\| \; \forall k \in \mathbb{N}_0.
	\end{align*}
\end{ass}
\vspace*{-0.4cm}
\hfill $\square$

\begin{ass}[Lipschitz system dynamics~\cite{Zanelli2021}]\label{ass:Lipschitz_sys}
	The centralized dynamics satisfy $f^\mathrm{c}(0,0) = 0$ and
	there exist positive finite constants $r_p'$, $\varrho$, $L_{f,x}^\mathrm{c}$, and $L_{f,u}^\mathrm{c}$ such that
	$$
	\| f^\mathrm{c}(x',u') - f^\mathrm{c}(x,u) \| \leq L_{f,x}^\mathrm{c} \| x' - x \| + L_{f,u}^\mathrm{c} \| u' - u \|
	$$ for all $x',x \in \mathbb{X}_{\bar{V}} \oplus \mathcal{B}(0,\varrho)$ and for all $u' = M_{u,p} p'$, $u = M_{u,p} p$ with $p',p \in \mathcal{B}(\bar{p}(x),r_p')$. \hfill $\square$	
\end{ass}

\begin{lem}[Centralized RTI stability]\label{lem:rti}
	Suppose that As-sumptions~\ref{ass:Lyapunov}--\ref{ass:Lipschitz_sys} hold and consider the sufficient sampling interval $\bar{\delta}$ and optimizer initialization radius $\tilde{r}_p$ defined in~\eqref{eq:RTIsuff} in Appendix~\ref{sec:app-rti}.	
	If $\delta \leq \bar \delta$, 
	then the origin is a locally exponentially stable equilibrium with region of attraction
	\begin{equation*}
	\Sigma \doteq \left\{ (x,p^{k_\mathrm{max}}) \in \mathbb{R}^{n_x + n_p} \left| \, x \in \mathbb{X}_{\bar{V}}, \left\| p^{k_\mathrm{max}} - \bar{p}(x) \right\| \leq \tilde r_p \right. \right\}
	\end{equation*}	
	for the closed-loop system-optimizer dynamics~\eqref{eq:sysoptdyn}. \hfill $\square$
\end{lem}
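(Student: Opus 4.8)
The plan is to construct a single Lyapunov function for the augmented state $(x,p^{k_\mathrm{max}})$ that couples the value-function decrease of the \emph{ideal} (exactly optimized) closed loop with the q-linear contraction of the optimizer error, and then to close the argument with a small-gain (spectral-radius) estimate on a two-dimensional comparison system. Since the origin corresponds to $x=0$ and $p=\bar{p}(0)=0$ by Assumption~\ref{ass:Lipschitz_z}, it suffices to track the two scalar quantities $\sqrt{V(x(t))}$ and the optimizer error $e(t)\doteq\|p^{k_\mathrm{max}}(t)-\bar{p}(x(t))\|$. The set $\Sigma$ is precisely the sublevel region on which $x\in\mathbb{X}_{\bar V}$ and $e\le\tilde r_p$, so the goal is to show that both quantities decay geometrically while $\Sigma$ remains forward invariant.

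First I would propagate the optimizer error. Warm-starting with $p^0(t)=p^{k_\mathrm{max}}(t-1)$ and applying Assumption~\ref{ass:qlin} $k_\mathrm{max}$ times gives $e(t)\le a_p^{k_\mathrm{max}}\,\|p^{k_\mathrm{max}}(t-1)-\bar{p}(x(t))\|$. Splitting the warm-start error by the triangle inequality and the Lipschitz controller bound of Assumption~\ref{ass:Lipschitz_z} yields
\begin{equation*}
e(t)\le a_p^{k_\mathrm{max}}\bigl(e(t-1)+L_{p,x}\,\|x(t)-x(t-1)\|\bigr).
\end{equation*}
The remaining ingredient is a bound on the state increment: using $f^\mathrm{c}(0,0)=0$ together with the Lipschitz estimate of Assumption~\ref{ass:Lipschitz_sys}, the sampling relation between $f^\mathrm{c}$ and $f^\delta$, and $\|u\|\le L_{p,x}\|x\|+e$ with $\|M_{u,p}\|=1$, one obtains $\|x(t)-x(t-1)\|\le \delta\,c_1\bigl(\sqrt{V(x(t-1))}+e(t-1)\bigr)$ after absorbing $L_{f,x}^\mathrm{c}$, $L_{f,u}^\mathrm{c}$, and $1/\sqrt{a_1}$ into $c_1$.

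Next I would establish a perturbed descent inequality for $\sqrt{V}$. Since the applied control $u(t)=M_{u,p}p^{k_\mathrm{max}}(t)$ differs from the exact law $\kappa_\mathrm{c}(x(t))$ by at most $\|M_{u,p}\|\,e(t)=e(t)$, the Lipschitz dynamics bound $\|f^\delta(x,u)-f^\delta(x,\kappa_\mathrm{c}(x))\|$ by $\delta\,c_2'\,e(t)$. Combining the nominal decrease $V(f^\delta(x,\kappa_\mathrm{c}(x)))-V(x)\le-\delta a_3\|x\|^2$ from Assumption~\ref{ass:Lyapunov} with the quadratic sandwich $a_1\|x\|^2\le V\le a_2\|x\|^2$ and the $\sqrt{V}$-Lipschitz property (constant $L_{V,x}$) gives
\begin{equation*}
\sqrt{V(x(t+1))}\le \sqrt{1-\tfrac{\delta a_3}{a_2}}\,\sqrt{V(x(t))}+\delta\,c_2\,e(t).
\end{equation*}
Together with the optimizer recursion, this furnishes a linear comparison system for $(\sqrt{V},e)$ whose transition matrix carries a diagonal contraction $\rho_V=\sqrt{1-\delta a_3/a_2}<1$ in the state row, a factor $a_p^{k_\mathrm{max}}<1$ in the optimizer row, and coupling entries that both scale with $\delta$.

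The final and most delicate step is the spectral-radius condition, which is where I expect the real difficulty. Because $\rho_V\to1$ as $\delta\to0$, one cannot simply shrink $\delta$ to buy contraction in the state row; the stabilizing mechanism is instead that both off-diagonal couplings vanish like $\delta$, so for $\delta$ below a threshold $\bar\delta$ the product of the coupling terms is dominated by $(1-\rho_V)(1-a_p^{k_\mathrm{max}})$ and the spectral radius drops below one. I would extract $\bar\delta$ from exactly this trace/determinant inequality. In parallel I must verify forward invariance of $\Sigma$: choosing $\tilde r_p$ small enough keeps every iterate inside the neighborhoods $\mathcal{B}(\bar p(x),\hat r_p)$ and $\mathbb{X}_{\bar V}\oplus\mathcal{B}(0,\hat r_x)$ on which Assumptions~\ref{ass:Lyapunov}--\ref{ass:Lipschitz_sys} are posed, so the comparison inequalities stay valid along the whole trajectory. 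Geometric decay of $(\sqrt{V},e)$ then yields local exponential stability of the origin on $\Sigma$ via $a_1\|x\|^2\le V\le a_2\|x\|^2$. As this mirrors the system–optimizer analysis of~\cite{Zanelli2021}, I anticipate the cleanest route is to verify that Assumptions~\ref{ass:Lyapunov}--\ref{ass:Lipschitz_sys} imply the hypotheses used there and to invoke that result for the explicit expressions of $\bar\delta$ and $\tilde r_p$.
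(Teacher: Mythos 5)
Your proposal is correct and follows essentially the same route as the paper: your optimizer-error recursion $e(t)\le a_p^{k_\mathrm{max}}\bigl(e(t-1)+L_{p,x}\|x(t)-x(t-1)\|\bigr)$ is exactly the contraction inequality~\eqref{eq:optimizer-contraction} that the paper derives in Lemma~\ref{lem:optimizer-contraction} (using $a_p^{k_\mathrm{max}}\le a_p$, Lemma~\ref{lem:lipschitzd}, and the forward-invariance Lemma~\ref{lem:fwdx} you correctly flag as needed), and the remaining coupled $(\sqrt{V},e)$ comparison-system and spectral-radius analysis, with both couplings and the state-row defect scaling like $\delta$, is precisely the part the paper delegates to~\cite[Thm.~25]{Zanelli2021}, which you also end up invoking for $\bar{\delta}$ and $\tilde{r}_p$.
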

\begin{proof}
	The proof proceeds similarly to the analysis in~\cite{Zanelli2021}.
	Since our q-linear optimizer convergence Assumption~\ref{ass:qlin} differs from~\cite[Ass. 10]{Zanelli2021}, we replace~\cite[Lem. 11]{Zanelli2021} by Lemma~\ref{lem:optimizer-contraction} from the Appendix.
	Specifically, the optimizer contraction inequality~\eqref{eq:optimizer-contraction} derived in Lemma~\ref{lem:optimizer-contraction} is equivalent to~\cite[Ineq. 26]{Zanelli2021} in the proof of~\cite[Prop. 16]{Zanelli2021}.
	The remaining analysis in~\cite{Zanelli2021} stays applicable such that local exponential stability follows from~\cite[Thm. 25]{Zanelli2021}.
\end{proof}

\begin{rem}[Relation to~\cite{Zanelli2021}]
	Lemma~\ref{lem:rti} is a specialized variant of~\cite[Thm. 25]{Zanelli2021} in four aspects: 
	First, we only consider quadratic bounds on the Lyapunov function.
	Second, we define the map $\bar{p}$ from the current state to the primal-dual variables, because of the dSQP convergence presented in the next section.
	Third, we allow for $k_\mathrm{max} \geq 1$ optimizer iterations.
	In contrast,~\cite[Thm. 25]{Zanelli2021} considers more general Lyapunov functions, allows for any Lipschitz continuous map $\bar{p}: \mathbb{R}^{n_x} \rightarrow \mathbb{R}^{n_p}$ such that $\bar{u}^\star(0) = M_{u,p}\bar{p}(x)$, and considers $k_\mathrm{max} = 1$. Note that the extension to $k_\mathrm{max} > 1$ is straight forward because of the q-linear optimizer convergence in Assumption~\ref{ass:qlin}.
	Moreover, the rationale of this section is to first select a candidate sampling interval $\delta$ such that Assumptions~\ref{ass:Lyapunov}--\ref{ass:Lipschitz_sys} hold and to then infer stability if $\delta \leq \bar{\delta}$. 
	The approach of~\cite[Thm. 25]{Zanelli2021} instead lets an assumption similar to Assumption~\ref{ass:Lyapunov} hold for a range of sampling intervals and then guarantees stability \textit{for all} sampling intervals below $\bar{\delta}$. 
	\hfill $\square$
\end{rem}

\section{Decentralized Sequential Quadratic Programming}\label{sec:dsqp}

The crucial requirement on the optimization method to guarantee closed-loop stability via Lemma~\ref{lem:rti} is local q-linear convergence.
Hence we now recall dSQP from~\cite{Stomberg2022a} and derive the required convergence property.

By introducing state trajectory copies for neighboring subsystems, OCP~\eqref{docp} can be written as a partially separable Nonlinear Program (NLP)~\cite{Summers2012,Bestler2019}
\begin{subequations} \label{eq:sepForm}
	\begin{align} 
	\min_{z} \; \sum_{i \in \mathcal{S}} & \; f_i(z_i)\label{eq:sepProbFi} \\
	\text{subject to}\hspace{0.5mm} \quad  g_i(z_i)&=0 \; | \; \nu_i \quad \forall i \in \mathcal{S}, \label{eq:sepProbGi} \\
	h_i(z_i) &\leq 0 \; | \; \mu_i \hspace*{0.3cm} \forall i \in \mathcal{S},\label{eq:sepProbHi} \\
	\sum_{i \in \mathcal{S}} E_iz_i  &= c \hspace*{1.2mm} | \; \lambda.\label{eq:consConstr}
	\end{align}
\end{subequations} 
The decision variables $z_i \hspace*{-0.1cm}\in \hspace*{-0.1cm}\mathbb{R}^{n_i}$ of subsystem $i$ include the predicted trajectories $\bar{\boldsymbol{x}}_i$ and $\bar{\boldsymbol{u}}_i$ over the horizon as well as copies of the predicted state trajectories of neighboring subsystems.
Example~\ref{ex:coupling} in Appendix~\ref{app:example} illustrates the reformulation of OCP~\eqref{docp} as a partially separable NLP like~\eqref{eq:sepForm}.
The functions $f_i: \mathbb{R}^{n_i} \rightarrow \mathbb{R}$, $g_i:\mathbb{R}^{n_i} \rightarrow \mathbb{R}^{n_{g_i}}$, and ${h_i:\mathbb{R}^{n_i} \rightarrow  \mathbb{R}^{n_{h_i}}}$ are composed of the objective functions, equality constraints, and inequality constraints in OCP~\eqref{docp}, respectively.
\begin{ass}[Differentiability of the NLP functions]\label{ass:c3}
The functions $f_i$, $g_i$, and $h_i$ are three times continuously differentiable for all $i \in \mathcal{S}$. \hfill $\square$
\end{ass}
The sparse matrices $E_i \in \mathbb{R}^{{n_c} \times {n_i} }$ couple the subsystems by matching original and copied variables of state trajectories.
Thus, the sparsity pattern of the matrices $E_i$ arises from the coupling graph $\mathcal{G}$.
Specifically, $(i,j)$ or $(j,i) \in \mathcal{E}$ if and only if there exists an index $o \in \{1,\dots,n_c\}$ such that $[E_i]_o \neq 0$ and $[E_j]_o \neq 0$.

The notation in NLP~\eqref{eq:sepForm} highlights that $\nu_i \in \mathbb{R}^{n_{g_i}}$, $\mu_i \in \mathbb{R}^{n_{h_i}}$, and $\lambda \in \mathbb{R}^{n_c}$ are Lagrange multipliers associated with the respective constraints.
The centralized variables are $z\doteq (z_1,\dots,z_S) \in \mathbb{R}^n$, $\nu\doteq (\nu_1,\dots,\nu_S) \in \mathbb{R}^{n_g}$, and $\mu\doteq (\mu_1,\dots,\mu_S) \in \mathbb{R}^{n_h}$.
Likewise, we denote the centralized constraints as $g(z)\doteq (g_1(z_1),\dots,g_S(z_S))$ and $h(z)\doteq (h_1(z_1),\dots,h_S(z_S))$.
Throughout this section, we denote the globally optimal Karush-Kuhn-Tucker~(KKT) point of NLP~\eqref{eq:sepForm} with initial condition $x \in \mathbb{X}_{\bar{V}}$ as $p^\star = (z^\star,\nu^\star,\mu^\star,\lambda^\star) \doteq \bar{p}(x)$, cf.~Definition~\ref{def:pbar}.
That is, the notation $p^\star$ drops the explicit dependence on $x$ for simplicity and we keep in mind that the KKT point depends on the initial state.

We define the Lagrangian to NLP~\eqref{eq:sepForm} as
\begin{align*}
L(z,\nu,\mu,\lambda) &= \sum_{i \in \mathcal{S}} L_i(z_i,\nu_i,\mu_i,\lambda) - \lambda^\top c,
\end{align*}
where $L_i \doteq \sum_{i \in \mathcal{S}} (f_i(z_i) + \nu_i^\top g_i(z_i) + \mu_i^\top h_i(z_i) + \lambda^\top E_i z_i)$.

The bi-level dSQP method from~\cite{Stomberg2022a} combines an SQP scheme on the outer level with ADMM on the inner level. We index outer iterations by $\cdot^k$ and inner iterations by $\cdot^l$.
Starting from a primal-dual point $p^k = (z^k,\nu^k,\mu^k,\lambda^k)$, the method proceeds as follows.
In each SQP iteration, we first construct a quadratic approximation of NLP~\eqref{eq:sepForm}
\begin{subequations}\label{eq:QP}
	\begin{align}
	\min_{z} \sum_{i \in \mathcal{S}} f_i^{\mathrm{QP},k}(&z_i)  \label{eq:SQPobj}\\
	\textrm{subject to} \quad g_i^k + \nabla g_i^{k\top} (z_i-z_i^k) &= 0 \;| \; \nu_i \hspace{3.5mm} \forall i \in \mathcal S,\label{eq:QPeq}\\
	h_i^k + \nabla h_i^{k\top} (z_i-z_i^k) &\leq 0 \; | \; \mu_i \hspace{3mm} \forall i \in \mathcal S,\\
	\sum_{i \in \mathcal{S}} E_i z_i &= c \hspace*{0.1mm}\; | \; \lambda, 
	\end{align}
\end{subequations} 
where, $f_i^{\mathrm{QP},k} \doteq (z_i-z_i^k)^{\top} H_i^k (z_i-z_i^k)/2 + \nabla f_i^{k\top}(z_i-z_i^k) $, 
and where, for all $i \in \mathcal S$, $H_i^k \approx \nabla_{z_iz_i}^2 L_i(z_i^k,\nu_i^k,\mu_i^k)$ is positive definite on the space spanned by~\eqref{eq:QPeq}.
The symbols $g_i^k$ and $\nabla g_i^k$ in QP~\eqref{eq:QP} are shorthands for $g_i(z_i^k)$ and $\nabla g_i(z_i^k)$, respectively. The same holds for functions $f_i$ and $h_i$.
We denote the unique centralized primal dual-solution to QP~\eqref{eq:QP} as~$p^{k,\star}_{\mathrm{QP}} = (z^{k,\star}_{\mathrm{QP}},\nu^{k,\star}_{\mathrm{QP}},\mu^{k,\star}_{\mathrm{QP}},\lambda^{k,\star}_{\mathrm{QP}})$ and the centralized Hessian as $H^k \doteq \mathrm{diag}(H_1^k,\dots,H_S^k)$.

Then, we apply a fixed number $l_\text{max}$ of ADMM iterations to QP~\eqref{eq:QP}. To this end, we introduce the decision variable $y \in \mathbb{R}^{n}$ and reformulate QP~\eqref{eq:QP} in two-block form as
\begin{subequations}\label{eq:QPadmm} 
	\begin{align}
	\min_{\substack{y_1 \in \mathbb{Z}_1^k, \dots, y_S \in \mathbb{Z}_S^k\\ z\in \mathbb{E}} }  & \quad \sum_{i \in \mathcal{S}} f_i^{\mathrm{QP},k}(y_i)\label{eq:QPadmmObj}\\
	\textrm{subject to } \quad  y_i - z_i &= 0 \; | \; \gamma_i \quad \quad \forall i \in \mathcal{S}\label{eq:QPadmmCons}.	
	\end{align}
\end{subequations} 
The subsystem constraint sets are defined as
\begin{align*}
\mathbb{Z}_i^k \doteq \left\{ y_i \in \mathbb{R}^{n_i} \left| \begin{aligned} g_i^k + \nabla g_i^{k\top} (y_i-z_i^k)&= 0\\
h_i^k + \nabla h_i^{k\top} (y_i-z_i^k) &\leq 0 \end{aligned} \right. \right\},
\end{align*} the consensus constraint set is
\begin{align*}
\mathbb{E} \doteq \left\{ z \in \mathbb{R}^n \left| \sum_{i \in \mathcal{S}} E_i z_i = c \right. \right\},
\end{align*} and $\gamma_i \in \mathbb{R}^{n_i}$ is the Lagrange multiplier to constraint~\eqref{eq:QPadmmCons}.
We denote the centralized multiplier as $\gamma \doteq (\gamma_1,\dots,\gamma_S)$ and define the augmented Lagrangian for QP~\eqref{eq:QPadmm} as
\begin{align*}
L^k_\rho(y,z,\gamma) &= \sum_{i \in \mathcal{S}} L_{\rho,i}^k(y_i,z_i,\gamma_i),
\end{align*} where $L_{\rho,i}^k \doteq  f_i^{\mathrm{QP},k}(y_i) + \gamma_i^\top (y_i-z_i) + \rho \| y_i - z_i \|_2^2/2$ and where $\rho > 0$ is a penalty parameter.
The ADMM iterations in the centralized variables read~\cite{Boyd2011}
\begin{subequations}
	\begin{align}
	(y^{l+1},\nu^{l+1},\mu^{l+1}) &\leftarrow \min_{y \in \mathbb{Z}^k} L_{\rho}^k(y,z^l,\gamma^l)\label{eq:admm1}\\
	({z}^{l+1}, \lambda^{l+1} ) &\leftarrow \min_{z \in \mathbb{E}} L_{\rho}^k(y^{l+1},z,\gamma^l)\label{eq:admm2}\\
	\gamma^{l+1} &= \gamma^{l} + \rho  (y^{l+1} - {z}^{l+1})\label{eq:admm3}
	\end{align}
\end{subequations}
where the notation in \eqref{eq:admm1}--\eqref{eq:admm2} indicates that we update the primal and dual iterates with the primal-dual solution obtained in the respective step and where $\mathbb{Z}^k \doteq \mathbb{Z}_1^k \times \dots \times \mathbb{Z}_S^k$.
ADMM is given in Algorithm~\ref{alg:admm}.

The solution returned by ADMM is then used in the next SQP iteration to construct a new QP. The resulting dSQP method is summarized in Algorithm~\ref{alg:d-SQP}.

\begin{algorithm}[t]
	\caption{ADMM for solving QP~\eqref{eq:QP}~\cite{Boyd2011}}
	\begin{algorithmic}[1]
		\State Initialization: $z_i^0,\gamma_i^0$, $l_\text{max}$ for all $i \in \mathcal{S}$, \label{admm-stp:1}
		\For{ $l = 0, 1, \dots, l_\text{max} - 1$}
		\State \hspace*{-3mm}$\displaystyle \left(y_i^{l+1},\nu_i^{l+1},\mu_i^{l+1}\right) \leftarrow \min_{y_i \in \mathbb{Z}_i^k}  L_{\rho,i}^k\left(y_i,z_i^l,\gamma_i^l\right)$ for all $i \in \mathcal{S}$\label{admm-step:1}   
		\State \hspace*{-3mm}$\displaystyle z^{l+1} = \argmin_{z \in \mathbb{E}} L_{\rho}^k(y^{l+1},{z},\gamma^{l})$ \label{admm-step2}
		\State \hspace*{-3mm}$\gamma_i^{l+1} = \gamma_i^l + \rho \left(y_i^{l+1}-{z}_i^{l+1}\right)$ for all $i \in \mathcal{S}$\label{admm-step3}
		\EndFor
		\State \textbf{return} ${z}_i^{l_\text{max}},\nu_i^{l_\text{max}},\mu_i^{l_\text{max}},\gamma_i^{l_\text{max}}$ for all $i \in \mathcal{S}$\label{admm:return}	
	\end{algorithmic} \label{alg:admm}	
\end{algorithm}

\begin{algorithm}[t]
	\caption{dSQP for solving NLP~\eqref{eq:sepForm}~\cite{Stomberg2022a}}
	\begin{algorithmic}[1]
		\State Initialization: $z_i^0,\nu_i^0,\mu_i^0,\gamma_i^0 = E_i^\top \lambda^0 \; \forall \; i \in \mathcal{S}$, $k_\text{max},l_\text{max}$ \label{dsqp-stp:1}
		\For{$k = 0,1,\dots, k_\text{max}-1$ } \label{dsqp-step:2}
		\State \hspace*{-3mm}evaluate $\nabla f_i^k, g_i^k, \nabla g_i^k, h_i^k, \nabla h_i^k, H_i^k$ for all $i \in \mathcal{S}$
		
		\hspace*{-4.8mm}and build QP~\eqref{eq:QP} \label{stp:3}
		\State \hspace*{-3mm}initialize Algorithm~\ref{alg:admm} with $z_i^k,\gamma_i^k,l_\text{max}$ and denote the

		\hspace*{-4.8mm}output by $z_i^{k+1}, \nu_i^{k+1}, \mu_i^{k+1}, \gamma_i^{k+1}$ for all $i \in \mathcal{S}$
		\EndFor\label{euclidendwhile}
		\State \textbf{return} $z_i^{k_\text{max}}, \nu_i^{k_\text{max}}, \mu_i^{k_\text{max}}, \gamma_i^{k_\text{max}}$ for all $i \in \mathcal{S}$		
	\end{algorithmic} \label{alg:d-SQP}	
\end{algorithm}

\begin{rem}[Decentralized ADMM]\label{rem:averaging}
	Steps~\ref{admm-step:1} and~\ref{admm-step3} of Algorithm~\ref{alg:admm} can be carried out by each subsystem individually.
	For the considered OCPs, each constraint in~\eqref{eq:consConstr} couples exactly two subsystems to match original and copied state variables.
	Moreover, $c = 0$. 
	Note that every NLP can be transformed to have these so-called 2-assigned constraints by adding further decision variables~\cite{Engelmann2020c}.
	Thus, NLP~\eqref{eq:sepForm} is a consensus optimization problem and ADMM can be decentralized~\cite[Ch. 7]{Boyd2011}.
	Appendix~\ref{app:example} illustrates the decentralization of~\eqref{eq:admm2} on a small-scale example.
	Following~\cite{Nedic2018}, we therefore refer to the employed ADMM variant and in consequence to dSQP as \textit{decentralized} optimization methods. \hfill $\square$
\end{rem}
\begin{rem}[Communication requirements]
The only step of the proposed dRTI scheme which requires communication is Step~\ref{admm-step2} of ADMM.
As discussed in Remark~\ref{rem:averaging}, this step can be implemented as an efficient averaging step which only requires neighbor-to-neighbor communication.
A common procedure considered in DMPC is to implement the averaging step via two communication rounds: First, copied state trajectories are sent to in-neighbors. Then, averaged state trajectories are sent to out-neighbors, cf.~\cite{Bestler2019,Stomberg2023,Stomberg2025a}. \hfill $\square$
\end{rem}

In the following, we combine convergence results from inexact SQP schemes for the outer level and from ADMM for the inner level to pave the road towards decentralized RTI schemes with stability guarantees via novel q-linear convergence guarantees for~Algorithm~\ref{alg:d-SQP}.

\subsection{Outer Convergence}

We first consider a basic SQP method, where QP~\eqref{eq:QP} is solved to high accuracy in each SQP iteration~\cite{Boggs1995}.
Then, we move on to inexact SQP schemes which use approximate solutions of QP~\eqref{eq:QP}.

We define convergence to $p^{\star} = (z^{\star},\nu^{\star},\mu^{\star},\lambda^{\star})$ as follows.

\begin{defi}[Convergence rates]
	The sequence $\{p^k\} \subset \mathbb{R}^{n_p}$ is said to converge to $p^\star \in \mathbb{R}^{n_p}$
	\begin{enumerate}
		\item[(i)] q-linearly, if $\| p^{k+1} - p^\star \| \leq c \| p^k - p^\star \|$ for all $k \geq k_0$, for some $0<c<1$, and for some $k_0 \in \mathbb{N}_0$.
		\item[(ii)] q-quadratically, if ${p^k \rightarrow p^\star}$ and if there exists a $C > 0$ such that $\|p^{k+1} - p^\star \| \leq C \|p^k - p^\star \|^2$ for all $k \in \mathbb{N}_0$. \phantom{a}\hfill $\square$
	\end{enumerate} 
\end{defi}
For all $i \in \mathcal{S}$, denote the set of active inequality constraints at $z_i^\star$ as $\mathcal{A}_i \doteq \{ j \in \{1,\dots,n_{h_i}\} \; | \; [h_i(z_i^\star)]_j = 0\}$.
Likewise, we denote the set of inactive inequality constraints as $\mathcal{I}_i \doteq \{j \in \{1,\dots,n_{h_i}\} \; | \; [h_i(z_i^\star)]_j < 0\}$.
Recall that we assume the functions in NLP~\eqref{eq:sepForm} to be three times continuously differentiable.
An exact SQP scheme is locally convergent under the following assumption~\cite{Boggs1995}.

\begin{ass}[Regularity of $p^\star$]\label{ass:kkt}
	The point $p^\star$ is a KKT point of NLP~\eqref{eq:sepForm} which, for all $i \in \mathcal{S}$, satisfies
	\begin{enumerate}
		\item[(i)] $h_i(z_i^\star) + \mu_i^\star \neq 0$ (strict complementarity), 
		\item[(ii)] $z_i^\top \nabla_{z_iz_i}^2 L_i(z_i^\star,\nu_i^\star,\mu_i^\star) z_i > 0$ for all $z_i \neq 0$ with $\nabla g_i(z_i^\star)^\top z_i = 0$.
	\end{enumerate}
	Furthermore, 
	the matrix
	\begin{align*}
	\begin{bmatrix} \nabla_{z_1} g_1(z_1^\star)^\top & & \\
	& \ddots & \\
	& & \nabla_{z_S} g_S(z_S^\star)^\top\\
	[\nabla_{z_1} h_1(z_1^\star)^\top]_{\mathcal{A}_1} & & \\
	& \ddots & \\
	& &[\nabla_{z_S} h_S(z_S^\star)^\top]_{\mathcal{A}_S}\\
	E_1 & \dots & E_S \end{bmatrix}
	\end{align*} has full row rank, i.e., 
	$z^\star$ satisfies the Linear Independence Constraint Qualification (LICQ).\hfill $\square$ 
\end{ass}

\begin{lem}[{Exact SQP convergence \cite{Boggs1995,Nocedal2006}}]\label{lem:sqpConv}
	~\\Suppose Assumption~\ref{ass:c3} holds and let $p^\star$ denote a KKT point which satisfies Assumption~\ref{ass:kkt}.
	Consider an exact-Hessian SQP scheme with iteration $p^{k+1} = p^{k,\star}_\mathrm{QP}$, where QP~\eqref{eq:QP} is formed with $H^k = \nabla_{zz}^2 L(z^k,\nu^k,\mu^k)$.
	Then, there exists an $\varepsilon_1 > 0$ such that, for all $p^0 \in \mathcal{B}( p^\star,\varepsilon_1)$, the sequence $\{p^k\}$ generated by the exact SQP scheme converges q-quadratically to $p^\star$. \hfill $\square$
\end{lem}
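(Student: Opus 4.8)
The plan is to reduce the exact-Hessian SQP iteration to Newton's method applied to a square system of nonlinear equations and then invoke the classical local q-quadratic convergence of Newton's method, following~\cite{Boggs1995,Nocedal2006}. I would proceed in three stages: (i)~establish nonsingularity of the KKT Jacobian at $p^\star$; (ii)~show that for $p^k$ close to $p^\star$ the QP~\eqref{eq:QP} identifies the correct active set; and (iii)~identify the resulting SQP step with a Newton step and conclude.

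First I would write out the KKT conditions of NLP~\eqref{eq:sepForm}: stationarity $\nabla f_i(z_i) + \nabla g_i(z_i)\nu_i + \nabla h_i(z_i)\mu_i + E_i^\top \lambda = 0$ for all $i \in \mathcal{S}$, primal feasibility $g_i(z_i)=0$, $h_i(z_i)\leq 0$, $\sum_{i \in \mathcal{S}} E_i z_i = c$, together with $\mu_i \geq 0$ and complementarity. Freezing the active inequalities $\mathcal{A}_i$ as equalities and fixing the inactive multipliers to zero yields a square nonlinear system $F(p) = 0$ with root $p^\star$. Its Jacobian $\nabla F(p^\star)$ is the standard KKT matrix assembled from $\nabla_{zz}^2 L(z^\star,\nu^\star,\mu^\star)$, the equality Jacobian $\nabla g(z^\star)^\top$, the active-inequality Jacobian $[\nabla h(z^\star)^\top]_{\mathcal{A}}$, and the coupling matrix $E = [E_1\ \cdots\ E_S]$. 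The full-row-rank (LICQ) condition in Assumption~\ref{ass:kkt} ensures the constraint Jacobian block has full row rank, while the curvature condition in Assumption~\ref{ass:kkt}(ii), because $\nabla_{zz}^2 L$ is block diagonal, gives positive definiteness of the Hessian on $\{z : \nabla g(z^\star)^\top z = 0\}$ and hence on the smaller null space of all active constraint gradients. These two properties are exactly the second-order sufficient condition under which $\nabla F(p^\star)$ is known to be nonsingular.

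The key step, and the one I expect to be the main obstacle, is the active-set identification: I must exhibit a radius $\varepsilon_1 > 0$ such that whenever $p^k \in \mathcal{B}(p^\star,\varepsilon_1)$, QP~\eqref{eq:QP} has a unique primal-dual solution $p^{k,\star}_{\mathrm{QP}}$ whose active inequalities coincide with $\mathcal{A}$. Here strict complementarity (Assumption~\ref{ass:kkt}(i)) is essential: by continuity of the NLP data the QP data stays near the linearization at $p^\star$, so the constraints in $\mathcal{I}_i$ remain strictly inactive with zero multipliers while those in $\mathcal{A}_i$ keep strictly positive multipliers. Once the active set is frozen, the optimality conditions of QP~\eqref{eq:QP} with $H^k = \nabla_{zz}^2 L$ are precisely the linearization of $F$ at $p^k$, so the update $p^{k+1} = p^{k,\star}_{\mathrm{QP}}$ equals one Newton step $p^{k+1} = p^k - \nabla F(p^k)^{-1} F(p^k)$.

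Finally, Assumption~\ref{ass:c3} guarantees that $f_i,g_i,h_i$ are three times continuously differentiable, so $F$ is $C^2$ and in particular $\nabla F$ is Lipschitz continuous near $p^\star$. With $F(p^\star)=0$, $\nabla F(p^\star)$ nonsingular, and $\nabla F$ locally Lipschitz, the standard Newton convergence theorem yields, after possibly shrinking $\varepsilon_1$, that the iterates remain in $\mathcal{B}(p^\star,\varepsilon_1)$ and converge q-quadratically to $p^\star$. Taking $\varepsilon_1$ to be the minimum of the Newton radius and the active-set-identification radius closes the argument.
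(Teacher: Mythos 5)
Your proposal is correct and takes essentially the same route as the paper: the paper settles this lemma in two sentences by citing \cite[Sec.~5]{Boggs1995} for active-set identification near $p^\star$ and \cite[Thm.~18.4]{Nocedal2006} for the convergence result, and your argument is exactly the standard one behind those citations --- nonsingularity of the KKT Jacobian from LICQ together with the (here strengthened) curvature condition, active-set identification via strict complementarity, and then identification of the SQP step with a Newton step on the fixed-active-set KKT system, whose q-quadratic convergence follows since Assumption~\ref{ass:c3} makes the Jacobian locally Lipschitz. No gap worth flagging; if anything, your write-up is more explicit than the paper's proof-by-citation.
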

Under Assumptions~\ref{ass:c3} and \ref{ass:kkt}, the SQP scheme considered in Lemma~\ref{lem:sqpConv} identifies the correct active set if $p^k \approx p^\star$~\cite[Sec. 5]{Boggs1995}. The above convergence result thus follows from~\cite[Thm. 18.4]{Nocedal2006}.
Note that Assumption~\ref{ass:kkt}~(ii) is slightly stronger than the standard Second-Order Sufficient Condition~(SOSC)~\cite[A4]{Boggs1995}, because  we exclude the conditions $[{\nabla h_i(z_i^\star)^\top]_{\mathcal{A}_i} z_i  = 0}$ and $E_iz_i=0$ on $z_i$ when demanding positive definiteness of the Hessian.
This facilitates the ADMM convergence analysis presented below and ensures that QP~\eqref{eq:QP} has a unique KKT point if $p^k \approx p^\star$.
Note that the latter does not necessarily hold under the standard SOSC condition, which can be addressed in the convergence analysis of SQP schemes by selecting $p^{k+1}$ as the KKT point of QP~\eqref{eq:QP} which is closest to $p^k$, cf.~\cite{Robinson1974}.

The exact SQP scheme considered in Lemma~\ref{lem:sqpConv} serves as the prototype for dSQP.
However, the real-time requirements in control only allow for a small number $l_\text{max}$ of ADMM iterations per SQP iteration in Algoritm~\ref{alg:d-SQP}.
Hence, we now consider an inexact SQP scheme where $p^{k+1}$ only approximates the primal-dual solution $p^{k,\star}_{\mathrm{QP}}$ of QP~\eqref{eq:QP}.

\begin{lem}[{Inexact SQP convergence}]\label{lem:truncSQP}
	Suppose Assumption~\ref{ass:c3} holds and let $p^\star$ denote a KKT point which satisfies Assumption~\ref{ass:kkt}.
	Form QP~\eqref{eq:QP} at a primal-dual point $p^k = (z^k,\nu^k,\mu^k,\lambda^k)$ using the exact Hessian $H^k = \nabla_{zz}^2 L(z^k,\nu^k,\mu^k)$. 
	Consider an inexact SQP scheme, whose iterates $\{p^{k}\}$, for all $k \in \mathbb{N}_0$ and for some $a < 1$, satisfy 
	\begin{equation}\label{eq:A1}
	\| p^{k+1} - p^{k,\star}_{\mathrm{QP}} \| \leq a \|p^k - p^{k,\star}_{\mathrm{QP}} \|.
	\end{equation}
	Furthermore, let $\bar{a}_p \in (a,1)$.		
	Then, there exists $\varepsilon_2 > 0$ such that the following holds.
	If $p^0 \in \mathcal{B} ( p^\star, \varepsilon_2)$, then the sequence $\{p^k\}$ generated by the inexact SQP scheme converges q-linearly to $p^\star$, $\|p^{k+1} - p^\star \| \leq \bar{a}_p \|p^k - p^\star \|$ for all $k \in \mathbb{N}_0.$~\hfill $\square$
\end{lem}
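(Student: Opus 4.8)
The plan is to treat the inexact SQP iterate $p^{k+1}$ as a perturbation of the exact SQP iterate $p^{k,\star}_{\mathrm{QP}}$ and to exploit the q-quadratic contraction of the exact scheme established in Lemma~\ref{lem:sqpConv}. First I would invoke that lemma: under Assumptions~\ref{ass:c3} and~\ref{ass:kkt} there exist $\varepsilon_1 > 0$ and $C > 0$ such that, for every $p^k \in \mathcal{B}(p^\star,\varepsilon_1)$, the unique exact SQP step obeys the one-step quadratic bound
\begin{equation*}
\| p^{k,\star}_{\mathrm{QP}} - p^\star \| \leq C \| p^k - p^\star \|^2 .
\end{equation*}
This per-step bound is exactly what underlies the q-quadratic convergence statement, and Assumption~\ref{ass:kkt}~(ii) guarantees that $p^{k,\star}_{\mathrm{QP}}$ is well defined and unique whenever $p^k$ lies close to $p^\star$.

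Second, I would decompose the inexact error via the triangle inequality and insert the inexactness bound~\eqref{eq:A1} together with the quadratic bound above. Writing $e_k \doteq \| p^k - p^\star \|$, one obtains
\begin{align*}
e_{k+1} &\leq \| p^{k+1} - p^{k,\star}_{\mathrm{QP}} \| + \| p^{k,\star}_{\mathrm{QP}} - p^\star \|\\
&\leq a \| p^k - p^{k,\star}_{\mathrm{QP}} \| + C e_k^2\\
&\leq a \left( e_k + C e_k^2 \right) + C e_k^2 = a\, e_k + (a+1) C e_k^2,
\end{align*}
where the last line again uses the quadratic bound to estimate $\| p^k - p^{k,\star}_{\mathrm{QP}} \| \leq e_k + \| p^{k,\star}_{\mathrm{QP}} - p^\star \|$. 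Factoring yields the scalar recursion $e_{k+1} \leq \bigl( a + (a+1)C e_k \bigr)\, e_k$.

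Third, I would close the argument by an induction that keeps the iterates inside a sufficiently small ball. Since $\bar{a}_p > a$, set
\begin{equation*}
\varepsilon_2 \doteq \min\left\{ \varepsilon_1,\; \frac{\bar{a}_p - a}{(a+1)C} \right\} > 0 .
\end{equation*}
Then $e_k \leq \varepsilon_2$ implies $a + (a+1)C e_k \leq \bar{a}_p < 1$, hence $e_{k+1} \leq \bar{a}_p e_k \leq \bar{a}_p \varepsilon_2 < \varepsilon_2$. Starting from $p^0 \in \mathcal{B}(p^\star,\varepsilon_2)$, the iterates therefore remain in $\mathcal{B}(p^\star,\varepsilon_2) \subseteq \mathcal{B}(p^\star,\varepsilon_1)$ for all $k$, the per-step quadratic bound stays valid throughout, and the contraction $\| p^{k+1} - p^\star \| \leq \bar{a}_p \| p^k - p^\star \|$ holds for every $k \in \mathbb{N}_0$, i.e., q-linear convergence with rate $\bar{a}_p$.

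The main obstacle is the first step: justifying the one-step quadratic bound on the exact SQP map \emph{uniformly} over $\mathcal{B}(p^\star,\varepsilon_1)$, together with the well-posedness (unique KKT point) of QP~\eqref{eq:QP}. This hinges on correct active-set identification near $p^\star$, which is guaranteed by strict complementarity in Assumption~\ref{ass:kkt}~(i); once the active set is fixed, the exact SQP iteration locally coincides with a Newton step on the equality-constrained KKT system, whose smoothness and vanishing linearization error at $p^\star$ produce the constant $C$. With this ingredient borrowed from Lemma~\ref{lem:sqpConv}, the remaining perturbation estimate and the induction are routine.
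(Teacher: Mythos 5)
Your proposal is correct and follows essentially the same route as the paper's proof: invoke the one-step quadratic bound $\| p^{k,\star}_{\mathrm{QP}} - p^\star \| \leq C \| p^k - p^\star \|^2$ from Lemma~\ref{lem:sqpConv}, chain it with~\eqref{eq:A1} via the triangle inequality to get $e_{k+1} \leq a\,e_k + (1+a)C e_k^2$, and choose $\varepsilon_2 \leq \min\{\varepsilon_1, (\bar{a}_p - a)/((1+a)C)\}$. Your explicit induction showing the iterates remain in $\mathcal{B}(p^\star,\varepsilon_2)$ is a point the paper leaves implicit, but it is a refinement of the same argument, not a different one.
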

\begin{proof}
	By Lemma~\ref{lem:sqpConv}, the convergence radius of the exact SQP scheme is thus given by $\varepsilon_1 > 0$.
	Let $\varepsilon_2 \leq \varepsilon_1$. From Lemma~\ref{lem:sqpConv} we have for all $k \in \mathbb{N}_0$ that QP~\eqref{eq:QP} is feasible and
	\begin{equation}\label{eq:quadr}
	\| p^{k,\star}_{\mathrm{QP}} - p^\star \| \leq C \| p^k - p^\star\|^2.
	\end{equation}
	Combining \eqref{eq:A1} and \eqref{eq:quadr} yields
	\begin{align*}
	\| p^{k+1} - p^\star \| & \leq \| p^{k+1} - p_{\mathrm{QP}}^{k,\star} \| + \| p^{k,\star}_{\mathrm{QP}} - p^\star \|\\
	& \leq a \|p^k - p^{k,\star}_{\mathrm{QP}} \| + \| p^{k,\star}_{\mathrm{QP}} - p^\star \|\\
	& \leq a \|p^k - p^{\star}\| + (1+a) \| p^{k,\star}_{\mathrm{QP}} - p^\star \|\\
	& \leq a \|p^k - p^{\star}\| + (1+a) C \|p^k - p^{\star}\|^2.
	\end{align*}
	Choosing $\|p^0 - p^\star\| < (\bar{a}_p-a)/((1+a)C)$ with $a < \bar{a}_p < 1$, we obtain for all $k \in \mathbb{N}_0$
	\begin{equation*}
	\| p^{k+1} - p^\star \|  \leq \bar{a}_p \| p^{k} - p^\star \|. 
	\end{equation*}
	Hence, if $p^0 \in \mathcal{B}(p^\star,\varepsilon_2)$ and if $\varepsilon_2 \leq \text{min}(\varepsilon_1, (\bar{a}_p-a)/((1+a)C))$, then the sequence $\{p^k\}$ converges q-linearly to $p^\star$.	 
\end{proof}

Lemma~\ref{lem:truncSQP} guarantees local SQP convergence, if the inexact QP solutions satisfy inequality~\eqref{eq:A1} as in~\cite[Lem. 3.1.10]{Zanelli2021b}.
As we show next, the solutions produced by ADMM meet this requirement. 

\subsection{Inner Convergence}

We next derive a sufficiently large number of ADMM iterations $l_\mathrm{max}$ that guarantees inexact SQP convergence via~\eqref{eq:A1}.
To this end, we first express $l_\mathrm{max}$ based on constants associated with the ADMM convergence for convex QPs.
Subsequently, we quantify the constants to compute $l_\mathrm{max}$ inside a local convergence region where the active set stays constant.
Define $c_1 \doteq  \max\{1,\|E^\top\|/\rho\}$, the ADMM averaging matrix $M_\mathrm{avg} \doteq I - E^\top (E E^\top)^{-1} E$, 
$$
D_1 \doteq \begin{bmatrix} M_\mathrm{avg} \\ (E E^\top)^{-1} E \rho \end{bmatrix} \begin{bmatrix} I & I \end{bmatrix},
$$
and $d_1 \doteq \|D_1\|$.
Furthermore, given a constant $d_2$, define $c_2 \doteq d_1 + d_1 d_2 + d_2$.

\begin{lem}[ADMM convergence]\label{lem:admmConv}
	Suppose Assumption~\ref{ass:c3} holds and let $p^\star$ denote a KKT point which satisfies Assumption~\ref{ass:kkt}.
	Form QP~\eqref{eq:QP} at a primal-dual point $p^k = (z^k,\nu^k,\mu^k,\lambda^k)$ using the exact Hessian $H^k = \nabla_{zz}^2 L(z^k,\nu^k,\mu^k)$.
	Initialize Algorithm~\ref{alg:admm} with $z_i^k$ and $E_i^\top \lambda^k$ for all $i \in \mathcal S$. 
	Then, there exist an ADMM contraction factor $0 < a_w < 1$ and constants $d_2,\varepsilon > 0$ such that the following holds for all $0 < a < 1$.
	If $p^k \in \mathcal{B}(p^\star,\varepsilon)$ and if 
	\begin{equation}\label{eq:lmax}
	l_\mathrm{max} \geq 1 + \max \left\{0, \left\lceil \log_{a_w}\left(\frac{a}{c_1 c_2}\right) \right\rceil \right\},
	\end{equation} 
	then the iterates $p^{l_\text{max}} = (z^{l_\text{max}},\nu^{l_\text{max}},\mu^{l_\text{max}},\lambda^{l_\text{max}})$ returned by Algorithm~\ref{alg:admm} satisfy 
	\begin{equation*}
	\|p^{l_\text{max}} - p^{k,\star}_{\mathrm{QP}} \| \leq a \| p^k - p^{k,\star}_{\mathrm{QP}} \|.
	\end{equation*} 
	Furthermore, the active set stays constant, i.e., for all $i \in \mathcal{S}$
	\begin{align*}
	[h_i^k + \nabla h_i^{k\top}(y_i^l - z_i^k)]_{\mathcal{A}_i} &= 0 \quad \forall l \in \mathbb{N},\\
	[\mu_i^l]_{\mathcal{I}_i} &= 0 \quad \forall l \in \mathbb{N}.
	\end{align*}
\end{lem}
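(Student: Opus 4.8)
The plan is to exploit the strong convexity of QP~\eqref{eq:QP} near $p^\star$ together with classical linear-convergence results for ADMM on equality-constrained convex quadratic programs, and then to convert the resulting contraction of the ADMM state variable into the claimed bound on the full primal-dual error. The central simplification is that, sufficiently close to $p^\star$, the inequality-constrained QP behaves like an equality-constrained one, since strict complementarity fixes the active set.

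First I would fix $\varepsilon > 0$ so small that Assumption~\ref{ass:kkt}(i) and continuity force the unique KKT point $p^{k,\star}_{\mathrm{QP}}$ of QP~\eqref{eq:QP} to inherit the active set $\mathcal{A}_i$ of $z^\star$, with strictly positive associated multipliers and strictly negative inactive constraint values. The $y$-subproblem in Step~\ref{admm-step:1} of Algorithm~\ref{alg:admm} is itself a strongly convex QP whose solution depends continuously on $(z^l,\gamma^l)$; hence, shrinking $\varepsilon$ if necessary and using the initialization $z_i^k,\, E_i^\top\lambda^k$, every $y$-update keeps the constraints in $\mathcal{A}_i$ active and those in $\mathcal{I}_i$ inactive. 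This yields the two active-set identities asserted in the lemma and, more importantly, lets me replace the active inequalities by equalities for the rest of the argument. The delicate point is a mild circularity---a frozen active set is needed to establish the contraction, while the contraction is needed to keep every iterate inside the region where the active set cannot change---which I would resolve by an induction on $l$, using the contraction below to certify that each iterate remains in $\mathcal{B}(p^\star,\varepsilon)$.

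With the active set frozen, QP~\eqref{eq:QPadmm} reduces to a consensus-structured \emph{equality}-constrained QP whose objective is strongly convex by Assumption~\ref{ass:kkt}(ii). The $y$-update then amounts to solving a blockwise linear KKT system, so the map from the ADMM state $(z^l,\gamma^l)$ to $(y^{l+1},\nu^{l+1},\mu^{l+1})$ is affine with norm bounded by the constant $d_2$, while the $z$- and $\lambda$-updates~\eqref{eq:admm2} are the projection onto $\mathbb{E}$ realized by $D_1$ with $d_1 = \|D_1\|$. On this strongly convex equality-constrained QP, ADMM is a linearly convergent fixed-point iteration: the state map $(z^l,\gamma^l)\mapsto(z^{l+1},\gamma^{l+1})$ contracts toward its fixed point $(z^{k,\star}_{\mathrm{QP}},\gamma^{k,\star}_{\mathrm{QP}})$ with some factor $0<a_w<1$, which I would extract from the spectral analysis of ADMM for convex QPs (as in~\cite{Boyd2011} and our earlier analysis~\cite{Stomberg2022a}), strong convexity being what rules out $a_w = 1$.

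Finally I would chain the three maps. The constant $c_1 = \max\{1,\|E^\top\|/\rho\}$ bounds the initial ADMM-state error by $\|p^k - p^{k,\star}_{\mathrm{QP}}\|$, recalling $\gamma^0 = E^\top\lambda^k$; the contraction shrinks this by $a_w^{\,l_\mathrm{max}-1}$ for the iterate that feeds the final $y$- and $z$-updates; and $c_2 = d_1 + d_1 d_2 + d_2$ converts the state error back into the full primal-dual error $\|p^{l_\mathrm{max}} - p^{k,\star}_{\mathrm{QP}}\|$. This yields
\begin{equation*}
\|p^{l_\mathrm{max}} - p^{k,\star}_{\mathrm{QP}}\| \leq c_1 c_2\, a_w^{\,l_\mathrm{max}-1}\,\|p^k - p^{k,\star}_{\mathrm{QP}}\|,
\end{equation*}
and the bound~\eqref{eq:lmax} on $l_\mathrm{max}$ is precisely what forces $c_1 c_2\, a_w^{\,l_\mathrm{max}-1} \leq a$. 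I expect the main obstacle to be the first step: certifying active-set invariance for \emph{all} iterates simultaneously with the contraction, rather than only at the QP optimum.
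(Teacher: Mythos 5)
Your skeleton matches the paper's proof almost step for step: regularity of $p^{k,\star}_{\mathrm{QP}}$ near $p^\star$ via sensitivity arguments, active-set constancy of the $y$-subproblem~\eqref{eq:admm1}, the affine subproblem map with Lipschitz constant $d_2$, the averaging map $D_1$ with $d_1 = \|D_1\|$, the role of $\gamma^0 = E^\top\lambda^k$ behind $c_1$, and the chained bound $c_1 c_2\, a_w^{\,l_\mathrm{max}-1} \leq a$ solved for $l_\mathrm{max}$, including the correct exponent $l_\mathrm{max}-1$. The one genuine divergence is where the contraction factor $a_w$ comes from, and that is also where your argument has a gap. The paper does not derive the contraction from the frozen active set; it invokes a \emph{global} q-linear convergence theorem for ADMM applied to convex QPs~\cite[Thm. 14]{Yang2016}, which yields $\|w^{l+1}-w^{k,\star}_{\mathrm{QP}}\| \leq a_w \|w^{l}-w^{k,\star}_{\mathrm{QP}}\|$ for \emph{all} $l$, irrespective of which inequality constraints are active along the iterates; the prerequisite is only that QP~\eqref{eq:QP} is convex with closed polyhedral sets $\mathbb{Z}^k$ and $\mathbb{E}$ and strictly convex objective over $\mathbb{Z}^k$, which the paper establishes by a null-space decomposition. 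The circularity you flag therefore does not arise in the paper's route: the contraction holds first, and active-set constancy is a \emph{consequence} (shrink $\varepsilon$ so that every $w^l$ stays within the sensitivity radius of $w^{k,\star}_{\mathrm{QP}}$), needed only to obtain the Lipschitz bound~\eqref{eq:contStep1} with constant $d_2$.

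Your alternative---derive the contraction from the frozen-active-set linear iteration and close the circle by induction on $l$---can be made rigorous, but only if you certify that the linear ADMM map contracts in the \emph{Euclidean operator norm}, i.e., $\|A^k\| < 1$, not merely that its spectral radius is below one. Spectral analysis of ADMM iteration matrices typically delivers a spectral-radius statement, and that alone permits transient norm growth: an iterate could temporarily leave the ball on which the active set is frozen, which breaks your induction hypothesis, and the per-iteration bound $\|w^{l+1}-w^{k,\star}_{\mathrm{QP}}\| \leq a_w \|w^{l}-w^{k,\star}_{\mathrm{QP}}\|$ used in your chained estimate would fail. Appealing to ``strong convexity'' does not rescue this on its own: $f^{\mathrm{QP},k}$ is only strictly convex \emph{on} $\mathbb{Z}^k$ (Assumption~\ref{ass:kkt}(ii) demands positive definiteness only on the null space of the equality-constraint Jacobian), and the second block of~\eqref{eq:QPadmm} is the indicator of the affine set $\mathbb{E}$, so standard strong-convexity-plus-smoothness linear-rate results for ADMM do not apply directly. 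It is telling that even the paper, when it later quantifies $a_w = \max_{p^k} \|A^k\|$ in Lemma~\ref{lem:a1}, proves $\|A^k\| < 1$ by appealing back to the global q-linear bound~\eqref{eq:ADMMqlin} rather than by an eigenvalue argument. So: either import the global convex-QP convergence result as the paper does (cleanest, and it dissolves your circularity entirely), or supply an explicit proof that the frozen-active-set iteration matrix is a norm contraction---without one, your induction is unsupported at exactly the point you yourself identified as delicate.
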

\hfill $\square$

\begin{proof}
	The proof proceeds in five steps.
	First, (a), we show that $p^{k,\star}_{\mathrm{QP}}$ is regular, if $p^k \approx p^\star$.
	Then, (b), ADMM converges q-linearly in the vector $w \doteq (z,\gamma/\rho)$, because QP~\eqref{eq:QP} is strictly convex over the constraint set.
	Then, (c), we apply the BST to the subsystem QPs and obtain that the active set is constant.
	Then, (d), we bound the error of the ADMM averaging step.
	Finally, (e), we derive the iteration bound~\eqref{eq:lmax}.
	
	(a) Let $\varepsilon \leq \varepsilon_1$.
	By the regularity of the KKT point $p^\star$, (Assumption~\ref{ass:kkt}), the solution to QP~\eqref{eq:QP} formed at $p^\star$ is also regular, i.e., if $p^k = p^\star$, then the QP solution $p^{k,\star}_{\mathrm{QP}}$ satisfies strict complementarity, LICQ, and the stronger SOSC
	\begin{equation}\label{eq:sSOSC}
	y^\top H^k y > 0 \text{ for all } y \neq 0 \text{ with } \nabla g(z^k)^\top y = 0.
	\end{equation}
	We can regard QP~\eqref{eq:QP} formed at $p^k \approx p^\star$ as a perturbed version of the QP formed at $p^\star$.
	Because of Assumption~\ref{ass:c3}, we can apply the BST~\cite[Thm. 3.2.2]{Fiacco1983} to QP~\eqref{eq:QP} and thus there exists $\varepsilon > 0$ such that the solution $p^{k,\star}_{\mathrm{QP}}$ to the perturbed QP~\eqref{eq:QP} is regular for all $p^k \in \mathcal{B}(p^\star,\varepsilon)$.
	The stronger SOSC condition~\eqref{eq:sSOSC} also holds if $p^k \in \mathcal{B}(p^\star,\varepsilon)$, which follows by adjusting the proof of the BST to the fact that Assumption~\ref{ass:kkt} (ii) is slightly stronger than SOSC~\cite[Lem. 3.2.1]{Fiacco1983}.

	(b) 
	From (a) we have that LICQ holds at $p^{k,\star}_{\mathrm{QP}}$ for all $p^k \in \mathcal{B}(p^\star,\varepsilon)$.
	Thus, the Jacobian $\nabla g_i^{k\top} \in \mathbb{R}^{n_{g_i} \times n_i}$ has full row rank for all $i \in \mathcal{S}$.
	Recall that for any $A \in \mathbb{R}^{m \times n}$, by the fundamental theorem of linear algebra, the null space of $A$ and the range space of $A^\top$ together form $\mathbb{R}^n$~\cite[p. 603]{Nocedal2006}.
	For all $i \in \mathcal{S}$, we can thus decompose any point $y_i \in \mathbb{R}^{n_i}$ via the null space method~\cite[Ch. 16]{Nocedal2006} as
	$$
	y_i = Z_i^kv_i + \nabla g_i^k w_i
	$$ with vectors $v_i \in \mathbb{R}^{n_i - n_{g_i}}$ and $w_i \in \mathbb{R}^{n_{g_i}}$, and a null space basis $Z_i^k \in \mathbb{R}^{n_i \times (n_i - n_{g_i})}$ of $\nabla g_i^{k\top}$.
	That is, $\nabla g_i^{k\top} Z_i^k = 0$.
	If $y_i \in \mathbb{Z}_i^k$, then $w_i$ is uniquely determined by the equality constraints~\eqref{eq:QPeq}: $w_i = - \left( \nabla g_i^{k\top}  \nabla g_i^k \right)^{-1} g_i^k$, which follows from the full row rank of $\nabla g_i^{k\top}$.
	Moreover, the stronger SOSC condition~\eqref{eq:sSOSC} implies that the reduced Hessian $\bar{H}_i^k \doteq Z_i^{k\top} H_i^k Z_i^k$ is positive definite for all $i \in \mathcal{S}$.
	Because $w_i$ is fixed for $y_i \in \mathbb{Z}_i^k$ and because $\bar{H}_i^k$ is positive definite, the objective $f_i^{\mathrm{QP},k}$ is strictly convex over the set $\mathbb{Z}_i^k$ for all $i \in \mathcal{S}$.
	Thus, $p^{k,\star}_{\mathrm{QP}}$ is the only KKT point of QP~\eqref{eq:QP}.
	Furthermore, the sets $\mathbb{Z}^k$ and $\mathbb{E}$ are closed and convex polyhedra, and they are feasible because $\varepsilon \leq \varepsilon_1$.
	Therefore, and because $f_i^{\mathrm{QP},k}$ are strictly convex over $\mathbb{Z}_i^k$ for all $i \in \mathcal{S}$, ADMM converges q-linearly in $w$ and, for all $p^k \in \mathcal{B}(p^\star,\varepsilon)$,
	\begin{equation}\label{eq:ADMMqlin}
	\| w^{l+1} - w^{k,\star}_{\mathrm{QP}} \| \leq a_w  \| w^{l} - w^{k,\star}_{\mathrm{QP}} \|
	\end{equation} for some $a_w < 1$ and for all $l \in \mathbb{N}_0$~\cite[Thm. 14]{Yang2016}.
	
	(c)
	The first ADMM step~\eqref{eq:admm1} solves a parametric QP, whose objective affinely depends on $(\rho{z}^l,\gamma^l) = \rho w^l$.
	The error $w^l - w_\mathrm{QP}^{k,\star}$ thus perturbs the QP in \eqref{eq:admm1}.	
	If~\eqref{eq:admm1} is parameterized with $w_\mathrm{QP}^{k,\star}$, then the step \eqref{eq:admm1} returns $y_\mathrm{QP}^{k,\star}$, $\nu_\mathrm{QP}^{k,\star}$, and $\mu_\mathrm{QP}^{k,\star}$.
	Thus, by the same arguments as in (a), the QP in step \eqref{eq:admm1} formed at $w_\mathrm{QP}^{k,\star}$ satisfies strict complementarity, LICQ, and the stronger SOSC condition~\eqref{eq:sSOSC} for all $p^k \in \mathcal{B}(p^\star,\varepsilon)$.
	We can therefore apply the BST to the perturbed subsystem QP in step \eqref{eq:admm1}.
	Thus, there exists $\varepsilon_3 > 0$ such that the map from $w^l - w^{k,\star}_{\mathrm{QP}}$ to $(y^{l+1},\nu^{l+1},\mu^{l+1})$ is continuously differentiable and the active set is constant for all $w^l \in \mathcal{B}(w^{k,\star}_{\mathrm{QP}},\varepsilon_3)$.
	That is, if $w^l \in \mathcal{B}(w^{k,\star}_{\mathrm{QP}},\varepsilon_3)$, then
	\begin{align*}
	[h_i^k + \nabla h_i^{k\top}(y_i^{l+1} - z_i^k)]_{\mathcal{A}_i} &= 0 \quad \forall i \in \mathcal{S} \\
	[\mu_i^{l+1}]_{\mathcal{I}_i} &= 0  \quad \forall i \in \mathcal{S}.
	\end{align*}
	Since local continuous differentiability implies local Lipschitz continuity, there exists a constant $d_2 < \infty$ such that, for all $w^l \in \mathcal{B}(w^{k,\star}_{\mathrm{QP}},\varepsilon_3)$ and for all $p^k \in \mathcal{B}(p^\star,\varepsilon)$,
	\begin{equation}\label{eq:contStep1}
	\left\| \begin{bmatrix} y^{l+1} \\ \nu^{l+1} \\ \mu^{l+1} \end{bmatrix}  - \begin{bmatrix} y^{k,\star}_{\mathrm{QP}} \\ \nu^{k,\star}_{\mathrm{QP}} \\ \mu^{k,\star}_{\mathrm{QP}} \end{bmatrix} \right\| \leq d_2 \| w^l - w^{k,\star}_{\mathrm{QP}} \|.
	\end{equation}
	Because of the q-linear ADMM convergence~\eqref{eq:ADMMqlin}, there exists $\varepsilon > 0$ such that $w^l \in \mathcal{B}(w^{k,\star}_{\mathrm{QP}},\varepsilon_3)$ for all $l \in \mathbb{N}_0$, if $p^k \in \mathcal{B}(p^\star,\varepsilon)$.

	(d)
	Define $\Delta y^{l} \doteq y^{l} - {y}^{k,\star}_{\mathrm{QP}}$, $\Delta z^{l} \doteq z^{l} - {z}^{k,\star}_{\mathrm{QP}}$, $\Delta \nu^{l} \doteq \nu^{l} - {\nu}^{k,\star}_{\mathrm{QP}}$, $\Delta \mu^{l} \doteq \mu^{l} - {\mu}^{k,\star}_{\mathrm{QP}}$, $\Delta \lambda^{l} \doteq \lambda^{l} - {\lambda}^{k,\star}_{\mathrm{QP}}$, and $\Delta \gamma^{l} \doteq \gamma^{l} - {\gamma}^{k,\star}_{\mathrm{QP}}$.
	The KKT conditions of the coordination QP in~\eqref{eq:admm2} yield
	\begin{align*}
	z^{l+1} &= M_{\mathrm{avg}} \left( y^{l+1} + \gamma^l/\rho\right) + E^\top \left( E E^\top \right)^{-1} c,\\
	\lambda^{l+1} &= \left( E E^\top  \right)^{-1} E \rho (y^{l+1} + \gamma^l / \rho) - \left( E E^\top \right)^{-1} \rho c
	\end{align*} for all $l \in \mathbb{N}_0$.
	Likewise,
	\begin{align*}
	z_{\mathrm{QP}}^{k,\star} &= M_{\mathrm{avg}} \left( y_{\mathrm{QP}}^{k,\star} + \gamma_{\mathrm{QP}}^{k,\star}/\rho\right) + E^\top \left( E E^\top \right)^{-1} c,\\
	\lambda_{\mathrm{QP}}^{k,\star} &= \left( E E^\top  \right)^{-1} E \rho (y_{\mathrm{QP}}^{k,\star} + \gamma_{\mathrm{QP}}^{k,\star} / \rho) - \left( E E^\top \right)^{-1} \rho c
	\end{align*}
	and hence
	\begin{equation}\label{eq:admmAvgError}
	\begin{bmatrix} \Delta z^{l+1} \\ \Delta \lambda^{l+1} \end{bmatrix} = \underbrace{\begin{bmatrix} M_\mathrm{avg} \\ (E E^\top)^{-1} E \rho \end{bmatrix} \begin{bmatrix} I & I \end{bmatrix}}_{D_1} \begin{bmatrix} \Delta y^{l+1} \\ \Delta \gamma^l / \rho \end{bmatrix}.
	\end{equation}
	Since $d_1 = \|D_1\|$, we obtain, for all $l \in \mathbb{N}_0$,
	\begin{equation}\label{eq:contStep2}
	\left\| \begin{bmatrix} \Delta z^{l+1} \\ \Delta \lambda^{l+1} \end{bmatrix} \right\| \leq  d_1 \left\| \begin{bmatrix} \Delta y^{l+1} \\ \Delta \gamma^l / \rho \end{bmatrix} \right\|.
	\end{equation}
	
	(e) 
	Combining the continuity properties~\eqref{eq:contStep1} and~\eqref{eq:contStep2} yields, for all $p^k \in \mathcal{B}(p^\star,\varepsilon)$ and for all $l \in \mathbb{N}_0$,
	\begin{align}
	\left\| p^{l+1} - p^{k,\star}_{\mathrm{QP}} \right\| &=\nonumber  \left\| \begin{bmatrix} \Delta{z}^{l+1} \\ \Delta \nu^{l+1} \\ \Delta \mu^{l+1} \\ \Delta \lambda^{l+1} \end{bmatrix} \right\| \\
	&\nonumber \leq \left\| \begin{bmatrix} \Delta \nu^{l+1} \\ \Delta \mu^{l+1} \end{bmatrix} \right \| + \left\| \begin{bmatrix} \Delta z^{l+1} \\ \Delta \lambda^{l+1} \end{bmatrix} \right\| \\
	&\nonumber \leq d_2 \| w^l - w^{k,\star}_{\mathrm{QP}} \| + d_1 \left\| \begin{bmatrix} \Delta y^{l+1} \\ \Delta \gamma^{l} / \rho \end{bmatrix} \right\| \\
	&\nonumber \leq d_2 \| w^l - w^{k,\star}_{\mathrm{QP}} \| + d_1  \| \Delta y^{l+1}\| + d_1 \| \Delta \gamma^{l} / \rho \|\\
	&\label{eq:Lipschitz_p} \leq \underbrace{\left( d_1 + d_1d_2 + d_2  \right)}_{= c_2} \| w^l - w^{k,\star}_{\mathrm{QP}} \|.
	\end{align}
	
	The KKT system of QP~\eqref{eq:QPadmm} implies $\gamma^{k,\star}_{\mathrm{QP}} = E^\top \lambda^{k,\star}_{\mathrm{QP}}$ and the KKT system of the coupling QP in Step~\eqref{eq:admm2} yields $\rho (z^{l+1}-y^{l+1}) - \gamma^l + E^\top \lambda^{l+1} = 0$.
	Inserting the last equation into the ADMM dual update~\eqref{eq:admm3} gives $\gamma^{l+1} = E^\top \lambda^{l+1}$ for all $l \in \mathbb{N}_0$.
	Combining this with the definitions of $w$ and $p$~and with the initialization $\gamma_i^0 = E_i^\top \lambda^0$ for $k = 0$ yields, for all $l \in \mathbb{N}_0$,
	\begin{equation*}
	w^l - w^{k,\star}_{\mathrm{QP}} = \begin{bmatrix} I & 0 & 0 & 0 \\ 0 & 0 & 0 & E^\top/\rho \end{bmatrix} \left( p^l - p^{k,\star}_{\mathrm{QP}} \right)
	\end{equation*} 	
	and hence
	\begin{equation}\label{eq:Lipschitz_w}
	\left\| w^l - w^{k,\star}_{\mathrm{QP}} \right\| \leq \underbrace{\left\| \begin{bmatrix} I & 0 & 0 & 0 \\ 0 & 0 & 0 & E^\top/\rho \end{bmatrix} \right\|}_{c_1} \left\| p^l - p^{k,\star}_{\mathrm{QP}} \right\|.
	\end{equation}
	Combining the q-linear convergence~\eqref{eq:ADMMqlin} with the Lipschitz bounds~\eqref{eq:Lipschitz_p}--\eqref{eq:Lipschitz_w} yields, for all $p^k \in \mathcal{B}(p^\star,\varepsilon)$,
	\begin{align*}
	\| p^{k+1} - p^{k,\star}_{\mathrm{QP}} \| & = \| p^{l_\text{max}} - p^{k,\star}_{\mathrm{QP}} \| \\
	& \leq c_2 \|  w^{l_\text{max}-1} - w^{k,\star}_{\mathrm{QP}} \|\\
	& \leq c_2 (a_w)^{l_\text{max}-1} \| w^k - w^{k,\star}_{\mathrm{QP}} \|\\
	& \leq \underbrace{c_1 c_2 (a_w)^{l_\text{max}-1}}_{\stackrel{!}{=}a} \| p^k - p^{k,\star}_{\mathrm{QP}} \|.
	\end{align*}
	Rearranging 
	$ a = c_1 c_2 (a_w)^{l_\mathrm{max}-1}$ yields
	$$
	l_\mathrm{max} = 1 + \log_{a_w}\left( \frac{a}{c_1c_2} \right).
	$$
	The algorithm is only well-defined if $l_\mathrm{max} \geq 1$ and thus we obtain the inner iteration bound~\eqref{eq:lmax}.
\end{proof}

The bound~\eqref{eq:lmax} depends on the ADMM contraction factor $a_w$ in~\eqref{eq:ADMMqlin}, the constant $d_2$ in the subsystem QP error~\eqref{eq:contStep1}, and $a$.
The constants $a_w$, $c_1$, and $c_2$ depend on the penalty parameter $\rho$ and can thus be tuned for a given problem.
The design parameter $a \in (0,1)$ controls the accuracy to which ADMM solves QP~\eqref{eq:QP}: choosing $a$ small increases $l_\mathrm{max}$ in~\eqref{eq:lmax}, but also allows for a larger sampling interval $\bar{\delta}$ in the stability guarantee.

The q-linear convergence~\eqref{eq:ADMMqlin} of ADMM is global, i.e., it holds for any $w^l \in \mathbb{R}^{2n}$.
To the best of our knowledge, there does not yet exist a quantification of the contraction factor $a_w$ for the global ADMM convergence that would be applicable here.
However, we can quantify $a_w$ and $d_2$ for $w^l \approx w_\mathrm{QP}^{k,\star}$ inside the area of constant active set as follows.

Denote the centralized active and inactive sets as $\mathcal{A} \doteq \{ j \in \{ 1,\dots,n_h \}\; | \; [h(z^\star)]_j = 0\}$ and $\mathcal{I} \doteq \{j \in \{1,\dots,n_h \} \; | \; [h(z^\star)]_j <0 \}$, resepctively.
Furthermore, define the shorthands $h_\mathcal{A}^k \doteq [h(z^k)]_\mathcal{A}$, $\nabla h^{k\top}_\mathcal{A} \doteq [\nabla h(z^k)^{\top}]_{\mathcal{A}}$, $\mu_\mathcal{A} \doteq [\mu]_\mathcal{A}$, and $\mu_\mathcal{I} \doteq [\mu]_\mathcal{I}$.
The centralized KKT matrix of the subsystem QPs in Step~\eqref{eq:admm1} is regular for all $p^k \in \mathcal{B}(p^\star,\varepsilon)$ with $\varepsilon$ from Lemma~\ref{lem:admmConv} and reads
\begin{equation*}
K^k \doteq \begin{bmatrix}
	H^k + \rho I & \nabla g^k & \nabla h_{\mathcal{A}}^k \\
	\nabla g^{k\top} & 0 & 0\\
	\nabla h_\mathcal{A}^{k\top} & 0 & 0 
	\end{bmatrix}.
\end{equation*} 
For all $p^k \in \mathcal{B}(p^\star,\varepsilon)$, we define the matrix
\begin{equation}\label{eq:D2}
D^k \doteq \rho \cdot (K^k)^{-1} \begin{bmatrix} I & -I \\ 0 & \phantom{-}0 \\ 0 & \phantom{-}0 \end{bmatrix}.
\end{equation}

\begin{lem}[Lipschitz constant of the subsystem QP]\label{lem:d2}
	Suppose Assumption~\ref{ass:c3} holds and let $p^\star$ denote a KKT point which satisfies Assumption~\ref{ass:kkt}.
	Form QP~\eqref{eq:QP} at a primal-dual point $p^k$ using the exact Hessian $H^k = \nabla_{zz}^2 L(z^k,\nu^k,\mu^k)$.
	Initialize Algorithm~\ref{alg:admm} with $z_i^k$ and $E_i^\top \lambda^k$ for all $i \in \mathcal{S}$.
	Let $p^k \in \mathcal{B}(p^\star,\varepsilon)$ with $\varepsilon$ from Lemma~\ref{lem:admmConv}.
	Then, the Lipschitz constant $d_2$ in~\eqref{eq:contStep1} is given by
	\begin{equation}\label{eq:d2}
	d_2 = \max_{p^k \in \mathcal{B}(p^\star,\varepsilon)} \left\|D^k\right\|. 
	\end{equation}
\end{lem}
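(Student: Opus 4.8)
The plan is to exploit the structure established in step~(c) of the proof of Lemma~\ref{lem:admmConv}: on the region of constant active set, the inner QP~\eqref{eq:admm1} reduces to an equality-constrained problem whose primal-dual solution is characterized by a \emph{linear} KKT system built from the regular matrix $K^k$. Since~\eqref{eq:admm1} is a strictly convex parametric QP whose augmented-Lagrangian objective depends affinely on $(\rho z^l,\gamma^l)$, its primal-dual solution map is affine in these parameters wherever the active set is fixed. Consequently the Lipschitz constant $d_2$ in~\eqref{eq:contStep1} is not merely a bound but equals the norm of the exact sensitivity matrix, and the claim reduces to identifying that matrix with $D^k$ from~\eqref{eq:D2} and arguing uniformity over $\mathcal{B}(p^\star,\varepsilon)$.

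First, I would write the stationarity and active-constraint equations for~\eqref{eq:admm1}. Using $\nabla f^{\mathrm{QP},k}(y)=H^k(y-z^k)+\nabla f^k$ and discarding the inactive multipliers (which vanish since the active set is constant, $[\mu_i^{l+1}]_{\mathcal{I}_i}=0$), the optimality conditions read
\begin{equation*}
K^k \begin{bmatrix} y^{l+1} \\ \nu^{l+1} \\ \mu_{\mathcal{A}}^{l+1} \end{bmatrix} = \begin{bmatrix} \rho z^l - \gamma^l + H^k z^k - \nabla f^k \\ \nabla g^{k\top} z^k - g^k \\ \nabla h_{\mathcal{A}}^{k\top} z^k - h_{\mathcal{A}}^k \end{bmatrix}.
\end{equation*}
The decisive observation is that the parameters $(z^l,\gamma^l)$ enter only the first block, and only through the combination $\rho z^l - \gamma^l = \rho\,[\,I \;\; -I\,]\,(z^l,\gamma^l/\rho) = \rho\,[\,I \;\; -I\,]\,w^l$, while the remaining right-hand-side terms are constant in the parameters.

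Second, I would write the identical system at the reference point $w^{k,\star}_{\mathrm{QP}}$, subtract, and invert $K^k$, which is regular for all $p^k \in \mathcal{B}(p^\star,\varepsilon)$ by Lemma~\ref{lem:admmConv}. The constant blocks cancel, leaving
\begin{equation*}
\begin{bmatrix} \Delta y^{l+1} \\ \Delta \nu^{l+1} \\ \Delta \mu_{\mathcal{A}}^{l+1} \end{bmatrix} = \rho (K^k)^{-1} \begin{bmatrix} I & -I \\ 0 & 0 \\ 0 & 0 \end{bmatrix} \left(w^l - w^{k,\star}_{\mathrm{QP}}\right) = D^k \left(w^l - w^{k,\star}_{\mathrm{QP}}\right).
\end{equation*}
Because $\Delta \mu_{\mathcal{I}}^{l+1} = 0$, appending the inactive components leaves the left-hand norm unchanged, so $\|(\Delta y^{l+1},\Delta\nu^{l+1},\Delta\mu^{l+1})\| = \|D^k(w^l - w^{k,\star}_{\mathrm{QP}})\| \leq \|D^k\|\,\|w^l - w^{k,\star}_{\mathrm{QP}}\|$, which is exactly~\eqref{eq:contStep1} with $d_2 = \|D^k\|$ for fixed $p^k$. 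To obtain one constant valid uniformly on $\mathcal{B}(p^\star,\varepsilon)$, I would then take the supremum of $\|D^k\|$ and argue it is attained: by Assumption~\ref{ass:c3} the blocks $H^k,\nabla g^k,\nabla h_{\mathcal{A}}^k$ depend continuously on $p^k$, $(K^k)^{-1}$ is well defined and continuous by regularity, the spectral norm is continuous, and $\mathcal{B}(p^\star,\varepsilon)$ is compact, yielding the $\max$ in~\eqref{eq:d2}. The main subtlety here is bookkeeping rather than analysis: one must verify that the parameter dependence collapses precisely to the single block $\rho\,[\,I\;\;-I\,]$ and that the inactive multipliers contribute nothing, so that the affine sensitivity matrix coincides \emph{exactly} with $D^k$ and not merely up to a bound, which is what makes the equality in~\eqref{eq:d2} sharp.
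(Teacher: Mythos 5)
Your proposal is correct and follows essentially the same route as the paper's proof: write the centralized KKT system of the subsystem QP step~\eqref{eq:admm1} on the constant active set, write the same system parameterized by $w^{k,\star}_{\mathrm{QP}}$, subtract, and invert the regular matrix $K^k$ to obtain $(\Delta y^{l+1},\Delta\nu^{l+1},\Delta\mu_{\mathcal{A}}^{l+1}) = D^k \Delta w^l$ with $\mu_{\mathcal{I}}^{l+1}=\mu_{\mathcal{I},\mathrm{QP}}^{k,\star}=0$. Your bookkeeping is in fact slightly more careful than the paper's (you retain the parameter-independent term $H^k z^k$ on the right-hand side, which the paper omits but which cancels in the subtraction anyway, and you make explicit the compactness-and-continuity argument for why the supremum in~\eqref{eq:d2} is attained).
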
~\hfill $\square$

\begin{proof}
As discussed in the proof of Lemma~\ref{lem:admmConv}, the active sets of NLP~\eqref{eq:sepForm}, QP~\eqref{eq:QP}, and the centralized subsystem QP in Step~\eqref{eq:admm1} are equivalent for $p^k \in \mathcal{B}(p^\star,\varepsilon)$.
Moreover, strict complementarity, LICQ, and the stronger SOSC version~\eqref{eq:sSOSC} hold at the subsystem QP solution $(y^{l+1},\mu^{l+1})$ for all ${l \in \mathbb{N}_0}$.
In centralized form, the KKT system of the subsystem QPs in Step~\eqref{eq:admm1} reads
\begin{align*}
\underbrace{\begin{bmatrix}
	H^k + \rho I & \nabla g^k & \nabla h_{\mathcal{A}}^k \\
	\nabla g^{k\top} & 0 & 0\\
	\nabla h^{k\top}_{\mathcal{A}} & 0 & 0 
	\end{bmatrix}}_{= K^k} \begin{bmatrix} y^{l+1} \\ \nu^{l+1} \\ \mu_{\mathcal{A}}^{l+1} \end{bmatrix} = \begin{bmatrix} -\nabla f^k - \gamma^l + \rho z^l \\ -g^k + \nabla g^{k\top} z^k \\ -h_\mathcal{A}^k + \nabla h_\mathcal{A}^{k\top} z^k \end{bmatrix}.
\end{align*}
Furthermore, $\mu_{\mathcal{I}}^{l+1} = 0$.
The KKT matrix $K^k$ is invertible for all $p^k \in \mathcal{B}(p^\star,\varepsilon)$, because of LICQ and the stronger SOSC condition~\eqref{eq:sSOSC}, cf.~\cite[Lem. 16.1]{Nocedal2006}.
Rearranging thus yields
\begin{align*}
\begin{bmatrix} y^{l+1} \\ \nu^{l+1} \\ \mu_{\mathcal{A}}^{l+1} \end{bmatrix} = \left(K^k\right)^{-1} \begin{bmatrix} -\nabla f^k -\gamma^l + \rho z^l \\ -g^k + \nabla g^{k\top} z^k \\ -h_\mathcal{A}^k + \nabla h_\mathcal{A}^{k\top} z^k \end{bmatrix}.
\end{align*} Likewise,
\begin{align*}
\begin{bmatrix} y_{\mathrm{QP}}^{k,\star} \\ \nu_{\mathrm{QP}}^{k,\star} \\ \mu_{\mathcal{A},\mathrm{QP}}^{k,\star} \end{bmatrix} = \left(K^k\right)^{-1} \begin{bmatrix} -\nabla f^k -\gamma_{\mathrm{QP}}^{k,\star} + \rho z_{\mathrm{QP}}^{k,\star} \\ -g^k + \nabla g^{k\top} z^k \\ -h^k_\mathcal{A} + \nabla h_\mathcal{A}^{k\top} z^k \end{bmatrix}
\end{align*} and hence
\begin{align}\label{eq:subsyserror}
\begin{bmatrix} \Delta y^{l+1} \\ \Delta \nu^{l+1} \\ \Delta \mu_{\mathcal{A}}^{l+1} \end{bmatrix} = \underbrace{ \rho \cdot \left(K^k\right)^{{-1}} \begin{bmatrix} I & -I \\ 0 & \phantom{-}0 \\ 0 & \phantom{-}0 \end{bmatrix} }_{= D^k} \Delta w^l.
\end{align}
Since $\mu_\mathcal{I}^{l+1} = \mu_{\mathcal{I},\mathrm{QP}}^{k,\star} = 0$, we obtain~\eqref{eq:contStep1} with $d_2$ given in~\eqref{eq:d2}.
\end{proof}

Lemma~\ref{lem:d2} shows how to compute the constant $d_2$ which is required to evaluate $c_2$ in the sufficient iteration bound~\eqref{eq:lmax}.
We next compute the ADMM contraction factor $a_w$ by showing that, inside $\mathcal{B}(p^\star,\varepsilon)$, ADMM behaves like a stable Linear Time-Invariant~(LTI) system.
To this end, we first partition the top block rows of $D^k$ defined in~\eqref{eq:D2} as
\begin{equation*}\label{eq:defT}
\begin{bmatrix} I & 0 & 0 \end{bmatrix} D^k \doteq \begin{bmatrix} T^k & -T^k \end{bmatrix}
\end{equation*} to obtain the block matrix $T^k \in \mathbb{R}^{n \times n}$.
Then, we define the ADMM system matrix
\begin{equation*}
A^k \doteq \begin{bmatrix} M_\mathrm{avg}T^k & M_\mathrm{avg} (I - T^k) \\
(I-M_\mathrm{avg}) T^k & (I-M_\mathrm{avg}) (I - T^k)  \end{bmatrix}.
\end{equation*}

\begin{lem}[ADMM contraction factor]\label{lem:a1}
Suppose Assumption~\ref{ass:c3} holds and let $p^\star$ denote a KKT point which satisfies Assumption~\ref{ass:kkt}.
Form QP~\eqref{eq:QP} at a primal-dual point $p^k$ using the exact Hessian $H^k = \nabla_{zz}^2 L(z^k,\nu^k,\mu^k)$.
Initialize Algorithm~\ref{alg:admm} with $z_i^k$ and $E_i^\top \lambda^k$ for all $i \in \mathcal{S}$.
Let $p^k \in \mathcal{B}(p^\star,\varepsilon)$ with $\varepsilon$ from Lemma~\ref{lem:admmConv}.
Then, the ADMM iterations read
\begin{equation}\label{eq:admmlti}
w^{l+1} - w^{k,\star}_{\mathrm{QP}} = A^k \left(w^l - w^{k,\star}_{\mathrm{QP}}\right).
\end{equation} Furthermore, the ADMM contraction factor $a_w < 1$ in~\eqref{eq:ADMMqlin} is given by
\begin{equation}\label{eq:a1}
a_w = \max_{p^k \in \mathcal{B}(p^\star,\varepsilon)} \left\|A^k\right\|.
\end{equation}
~\hfill $\square$
\end{lem}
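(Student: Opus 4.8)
The plan is to prove the two assertions in turn: first establish the exact linear error recursion~\eqref{eq:admmlti}, then read off the contraction factor~\eqref{eq:a1} from it and argue the strict bound $a_w<1$.

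For the recursion, I would assemble the three ADMM error updates that are already in hand from the proof of Lemma~\ref{lem:admmConv}. Writing $\Delta w^l = (\Delta z^l, \Delta\gamma^l/\rho)$, the subsystem sensitivity~\eqref{eq:subsyserror} together with the partition $\begin{bmatrix} I & 0 & 0\end{bmatrix} D^k = \begin{bmatrix} T^k & -T^k\end{bmatrix}$ gives $\Delta y^{l+1} = T^k\Delta z^l - T^k\Delta\gamma^l/\rho$. The coordination-step KKT conditions stated in Lemma~\ref{lem:admmConv} yield $\Delta z^{l+1} = M_\mathrm{avg}(\Delta y^{l+1} + \Delta\gamma^l/\rho)$, where the constant term $E^\top(EE^\top)^{-1}c$ cancels in the error. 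The dual update~\eqref{eq:admm3} then gives $\Delta\gamma^{l+1}/\rho = \Delta\gamma^l/\rho + \Delta y^{l+1} - \Delta z^{l+1} = (I - M_\mathrm{avg})(\Delta y^{l+1} + \Delta\gamma^l/\rho)$. Substituting the expression for $\Delta y^{l+1}$ into the last two identities and collecting the $\Delta z^l$ and $\Delta\gamma^l/\rho$ terms reproduces exactly the block matrix $A^k$, establishing~\eqref{eq:admmlti}.

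For the contraction factor, \eqref{eq:admmlti} immediately yields $\|w^{l+1} - w^{k,\star}_{\mathrm{QP}}\| \leq \|A^k\|\,\|w^l - w^{k,\star}_{\mathrm{QP}}\|$ at every $p^k$, so any uniform factor in~\eqref{eq:ADMMqlin} must dominate $\|A^k\|$ across the ball. Since $H^k$, $\nabla g^k$, $\nabla h_{\mathcal{A}}^k$, and hence $K^k$, $D^k$, $T^k$, and $A^k$ depend continuously on $p^k$ (Assumption~\ref{ass:c3}) and $K^k$ is invertible throughout the compact set $\mathcal{B}(p^\star,\varepsilon)$, the map $p^k \mapsto \|A^k\|$ is continuous and attains its maximum, which I take as $a_w$ in~\eqref{eq:a1}.

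The main obstacle is the strict inequality $a_w<1$: because $A^k$ is generally non-normal, its spectral norm need not equal its spectral radius, so a one-step Euclidean contraction is not automatic from mere stability of the linear recursion. I would resolve this by invoking the q-linear convergence already secured in part~(b) of the proof of Lemma~\ref{lem:admmConv}, where \cite[Thm. 14]{Yang2016} supplies some factor $a_w'<1$ satisfying~\eqref{eq:ADMMqlin} on the whole ball. Because the error dynamics are \emph{exactly} linear inside the region of constant active set, evaluating~\eqref{eq:ADMMqlin} along $w^l = w^{k,\star}_{\mathrm{QP}} + t v$ for an arbitrary direction $v$ and small $t>0$ and using homogeneity forces $\|A^k v\| \leq a_w'\|v\|$, hence $\|A^k\| \leq a_w' < 1$ pointwise. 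Continuity and compactness then upgrade this to $\max_{p^k \in \mathcal{B}(p^\star,\varepsilon)}\|A^k\| < 1$, which completes the argument.
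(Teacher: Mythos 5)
Your proof is correct and follows essentially the same route as the paper's: you derive the exact linear error recursion by chaining the subsystem sensitivity \eqref{eq:subsyserror}, the averaging-step error \eqref{eq:admmAvgError2}, and the dual update, and you then bound $\left\|A^k\right\| < 1$ via the q-linear convergence \eqref{eq:ADMMqlin} from \cite[Thm. 14]{Yang2016}. Your explicit homogeneity argument (evaluating the one-step bound at $w^{k,\star}_{\mathrm{QP}} + tv$ for small $t>0$, which keeps the iterate inside the constant-active-set region where the recursion is exactly linear) and the continuity-plus-compactness step ensuring the maximum in \eqref{eq:a1} is attained and strictly below one merely make rigorous what the paper leaves implicit in its ``$\max_{\Delta w^l \neq 0}$'' step.
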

\begin{proof}
Lemma~\ref{lem:admmConv} guarantees that the active set of the centralized subsystem QP stays constant for all $l \in \mathbb{N}_0$, if $p^k \in \mathcal{B}(p^\star,\varepsilon)$.
Thus, the subsystem QP error is given by~\eqref{eq:subsyserror}.
Rewriting the top block rows of~\eqref{eq:subsyserror} yields
\begin{equation}\label{eq:subsyserror2}
\Delta y^{l+1} = \begin{bmatrix} T^k & -T^k\end{bmatrix} \Delta w^l.
\end{equation}
Recall the error in the ADMM averaging step from~\eqref{eq:admmAvgError},
\begin{equation}\label{eq:admmAvgError2}
\Delta z^{l+1} = M_\mathrm{avg} \left( \Delta y^{l+1} + \Delta \gamma^l / \rho \right).
\end{equation} Inserting~\eqref{eq:subsyserror2} into~\eqref{eq:admmAvgError2} yields
\begin{equation}\label{eq:admmAvgError3}
\Delta z^{l+1} = \begin{bmatrix} M_\mathrm{avg} T^k & M_\mathrm{avg} (I-T^k) \end{bmatrix} \Delta w^l.
\end{equation} By rearranging the dual update~\eqref{eq:admm3}, we obtain
\begin{equation*}
\Delta \gamma^{l+1} / \rho = \Delta \gamma^l / \rho + \left( \Delta y^{l+1} - \Delta z^{l+1} \right)
\end{equation*} and hence
\begin{equation}\label{eq:gammaerror}
\frac{\Delta \gamma^{l+1}}{\rho} = \begin{bmatrix} (I - M_\mathrm{avg})T^k & (I - M_\mathrm{avg}) (I - T^k) \end{bmatrix} \Delta w^l.
\end{equation} By combining~\eqref{eq:admmAvgError3} and~\eqref{eq:gammaerror}, we can write ADMM as the LTI system
\begin{equation*}
\Delta w^{l+1} = \underbrace{\begin{bmatrix} M_\mathrm{avg}T^k & M_\mathrm{avg} (I - T^k) \\
(I-M_\mathrm{avg}) T^k & (I-M_\mathrm{avg}) (I - T^k)  \end{bmatrix}}_{A^k} \Delta w^l
\end{equation*} to obtain~\eqref{eq:admmlti}.
Moreover, 
\begin{equation*}
\left\| \Delta w^{l+1} \right\| \leq \left\| A^k \right\| \left\| \Delta w^l \right\|.
\end{equation*} The q-linear ADMM convergence~\eqref{eq:ADMMqlin} yields
\begin{equation*}
\left\| A^k \right\| = \max_{\Delta w^l \neq 0} \frac{\left\| A^k \Delta w^{l} \right\|}{\left\| \Delta w^l \right\| } = \max_{\Delta w^l \neq 0} \frac{\left\| \Delta w^{l+1} \right\|}{\left\| \Delta w^l \right\| } < 1
\end{equation*} for all $p^k \in \mathcal{B}(p^\star,\varepsilon)$.
Finally, we obtain the worst-case ADMM contraction factor $a_w$ inside $\mathcal{B}(p^\star,\varepsilon)$ via~\eqref{eq:a1}.
\end{proof}

\begin{rem}[Multiple ADMM iterations per QP]
	Convergence of the inexact SQP scheme is guaranteed if the approximate QP solutions satisfy~\eqref{eq:A1}. 
	This condition requires a sufficiently accurate guess for the primal-dual variables~$p$.
	While ADMM is q-convergent in $w$, it only is r-convergent in $p$.
	Hence in applications multiple ADMM iterations per SQP step are required to guarantee~\eqref{eq:A1}. \hfill $\square$  
\end{rem}

\subsection{Optimizer Convergence with Limited Inner Iterations}

We now combine the outer convergence from Lemma~\ref{lem:truncSQP} with the inner convergence from Lemma~\ref{lem:admmConv} to prove the q-linear convergence of Algorithm~\ref{alg:d-SQP} for fixed $l_\text{max}$.

\begin{thm}[dSQP convergence]\label{thm:dsqp}
	Suppose Assumption~\ref{ass:c3} holds and let $p^\star$ denote a KKT point which satisfies Assumption~\ref{ass:kkt}.
	Form QP~\eqref{eq:QP} with the exact Hessian $H^k = \nabla_{zz}^2 L(z^k,\nu^k,\mu^k)$.
	Let $0 < a < \bar{a}_p < 1$.
	Then there exists a constant $\varepsilon > 0$ such that the following holds with $d_2$ and $a_w$ computed via~\eqref{eq:d2} and~\eqref{eq:a1}.
	 
	If $p^0 \in \mathcal{B}(p^\star,\varepsilon)$ and if the number $l_\mathrm{max}$ of ADMM iterations per SQP iterations satisfies~\eqref{eq:lmax}, then the sequence $\{p^k\}$ generated by Algorithm~\ref{alg:d-SQP} converges to $p^\star$ and, for all $k \in \mathbb{N}_0$, satisfies
	$\| p^{k+1} - p^\star \| \leq \bar{a}_p \| p^k - p^\star \|.$ 
	\hfill $\square$
	\end{thm}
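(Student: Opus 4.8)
The plan is to treat dSQP (Algorithm~\ref{alg:d-SQP}) as a concrete instance of the inexact SQP scheme of Lemma~\ref{lem:truncSQP}, in which the inexact QP solve at each outer iteration is realized by the fixed number $l_\mathrm{max}$ of inner ADMM iterations of Algorithm~\ref{alg:admm}. The two results I would chain together are Lemma~\ref{lem:admmConv}, which certifies that ADMM produces an approximate QP solution meeting the inexactness bound~\eqref{eq:A1} whenever the current outer iterate lies in $\mathcal{B}(p^\star,\varepsilon)$, and Lemma~\ref{lem:truncSQP}, which converts~\eqref{eq:A1} into q-linear outer convergence at rate $\bar{a}_p$. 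The constants $d_2$ and $a_w$ entering the iteration bound~\eqref{eq:lmax} are supplied by Lemmas~\ref{lem:d2} and~\ref{lem:a1}, so once $l_\mathrm{max}$ is chosen according to~\eqref{eq:lmax} for the given $a\in(0,\bar{a}_p)$, the per-step ADMM accuracy $a$ is guaranteed.

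First I would fix the radius. Let $\varepsilon_1$ be the exact-SQP convergence radius from Lemma~\ref{lem:sqpConv}, let $\varepsilon_{\mathrm{ADMM}}$ denote the radius from Lemma~\ref{lem:admmConv}, and recall from the proof of Lemma~\ref{lem:truncSQP} the contraction radius $(\bar{a}_p-a)/((1+a)C)$, where $C$ is the q-quadratic constant of Lemma~\ref{lem:sqpConv}. I would then set $\varepsilon \doteq \min\{\varepsilon_1,\,\varepsilon_{\mathrm{ADMM}},\,(\bar{a}_p-a)/((1+a)C)\}$, so that inside $\mathcal{B}(p^\star,\varepsilon)$ all three ingredients are simultaneously valid: the exact-SQP quadratic bound~\eqref{eq:quadr}, the ADMM inexactness bound, and the one-step contraction estimate.

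The core of the argument is an induction showing that the outer iterates never leave $\mathcal{B}(p^\star,\varepsilon)$. Assuming $p^k \in \mathcal{B}(p^\star,\varepsilon)$, Lemma~\ref{lem:admmConv} applies to the QP built at $p^k$ (its initialization $z_i^k,\,E_i^\top\lambda^k$ matches Algorithm~\ref{alg:d-SQP}) and yields $\|p^{k+1}-p^{k,\star}_{\mathrm{QP}}\| \le a\,\|p^k-p^{k,\star}_{\mathrm{QP}}\|$, i.e.~hypothesis~\eqref{eq:A1} holds at step $k$. Since also $p^k \in \mathcal{B}(p^\star,\varepsilon_1)$, the quadratic bound~\eqref{eq:quadr} is available, and the chain of inequalities in the proof of Lemma~\ref{lem:truncSQP} gives $\|p^{k+1}-p^\star\| \le \bar{a}_p\,\|p^k-p^\star\|$. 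Because $\bar{a}_p<1$, this simultaneously establishes the claimed contraction and the inclusion $p^{k+1}\in\mathcal{B}(p^\star,\bar{a}_p\varepsilon)\subseteq\mathcal{B}(p^\star,\varepsilon)$, closing the induction. Iterating from $p^0\in\mathcal{B}(p^\star,\varepsilon)$ then yields $\|p^{k+1}-p^\star\| \le \bar{a}_p\,\|p^k-p^\star\|$ for all $k\in\mathbb{N}_0$ and hence convergence to $p^\star$.

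The main obstacle I anticipate is precisely this bootstrapping: Lemma~\ref{lem:admmConv} only certifies~\eqref{eq:A1} while $p^k$ stays in $\mathcal{B}(p^\star,\varepsilon)$, whereas Lemma~\ref{lem:truncSQP} presupposes~\eqref{eq:A1} for \emph{all} $k$. The two are not independent, and the resolution hinges on choosing $\varepsilon$ small enough that the guaranteed per-step contraction at rate $\bar{a}_p<1$ is self-sustaining, so the sequence remains within the region where ADMM contracts and the active set stays constant. A minor point to verify is that the iterate returned by Algorithm~\ref{alg:admm} — with $\lambda^{l_\mathrm{max}}$ recovered from the relation $\gamma=E^\top\lambda$ established in the proof of Lemma~\ref{lem:admmConv} — is exactly the $p^{k+1}$ entering the next outer iteration, so that the norms in~\eqref{eq:A1} and in the final contraction refer to the same primal-dual vector.
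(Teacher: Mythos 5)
Your proposal is correct and follows essentially the same route as the paper: the paper's proof of Theorem~\ref{thm:dsqp} likewise chains Lemma~\ref{lem:admmConv} (ADMM supplies the inexactness bound~\eqref{eq:A1} inside $\mathcal{B}(p^\star,\varepsilon)$, with $d_2$ and $a_w$ from Lemmas~\ref{lem:d2} and~\ref{lem:a1} feeding the iteration bound~\eqref{eq:lmax}) into Lemma~\ref{lem:truncSQP}, after shrinking $\varepsilon$ so both lemmas apply simultaneously. Your explicit induction showing the iterates remain in $\mathcal{B}(p^\star,\varepsilon)$ — resolving the apparent circularity that Lemma~\ref{lem:truncSQP} presupposes~\eqref{eq:A1} for all $k$ while Lemma~\ref{lem:admmConv} certifies it only inside the ball — is left implicit in the paper's terse argument, so your write-up is, if anything, slightly more careful on that point.
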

\begin{proof}
	By the outer convergence result (Lemma~\ref{lem:truncSQP}), there exists $\varepsilon_2 > 0$ such that the following holds. 
	If the ADMM solutions to QP~\eqref{eq:QP} satisfy the sufficient accuracy requirement~\eqref{eq:A1} for the inexact SQP scheme and if $p^0 \in \mathcal{B}(p^\star,\varepsilon_2)$, then the sequence $\{p^k\}$ generated by Algorithm~\ref{alg:d-SQP} converges q-linearly to $p^\star$.
	Let $\varepsilon \leq \varepsilon_2$.
	From the inner convergence (Lemma~\ref{lem:admmConv}), there exists a radius $\varepsilon > 0$ such that the ADMM solutions satisfy~\eqref{eq:A1} for all $p^k \in \mathcal{B}(p^\star,\varepsilon)$.
	Thus, we obtain q-linear convergence in the outer iterations.
\end{proof}

\begin{rem}[Relation to~\cite{Stomberg2022a}]
	An earlier version of dSQP is presented in~\cite{Stomberg2022a}.
	There, an inexact Newton-type stopping criterion terminates ADMM dynamically in each SQP step and control applications were not considered.
	Theorem~\ref{thm:dsqp} guarantees convergence when the number of ADMM iterations per SQP iteration is fixed to $l_\text{max}$.
	This avoids the online communication of convergence flags and allows us to use dSQP in the proposed decentralized RTI scheme.
	Consequently, the convergence proof derived in this article differs from~\cite{Stomberg2022a} and, inspired by a centralized inexact SQP scheme from~\cite{Zanelli2021b}, is centered around inequality~\eqref{eq:A1} instead of inexact Newton methods. \hfill $\square$
\end{rem}

\section{Decentralized Real-Time Iterations}\label{sec:drti}

We now combine the q-linear convergence of dSQP and the RTI stability result from Lemma~\ref{lem:rti}.

Consider the setpoint stabilization for the network $\mathcal S$ of dynamical systems with dynamics~\eqref{docp:sys}.
In each control sampling interval $\delta$, the state $x(t)$ is assumed to be measured and a constant input signal is applied to the system.
We define the distributed NMPC control law $\kappa_\mathrm{d} : \mathbb{X}_0 \times \mathbb{R}^{n_p} \rightarrow \mathbb{R}^{n_u}$ as the map from the centralized state $x(t)$ and the dSQP initialization~$p^0(t)$ to the first part $\bar{u}^{k_\text{max}}(0)=M_{u,p}p^{k_\mathrm{max}}(t)$ of the centralized input trajectory which is returned by Algorithm~\ref{alg:d-SQP} with settings $k_\text{max} \in \mathbb{N}$ and $H_i = \nabla_{z_iz_i}^2 L_i$.
That is, in each NMPC step we apply dSQP with one or more outer iterations and with $l_\text{max}$ inner iterations per outer iteration to OCP~\eqref{docp}, or respectively its NLP reformulation~\eqref{eq:sepForm}.
Furthermore, we initialize dSQP with the OCP solution obtained in the previous NMPC step, i.e. $p^0(t) = p^{k_\mathrm{max}}(t-1)$.
Thus, the centralized system~\eqref{ocp:sys} and dSQP form the closed-loop system optimizer dynamics~\eqref{eq:sysoptdyn}, where $\Phi$ maps the current state $x(t)$ and dSQP output $p^{k_\mathrm{max}}(t)$ to the dSQP solution at the next time step.
We are now ready to state our main result.

\begin{thm}[Decentralized RTI stability]\label{thm:drti}
	Suppose that Assumptions~\ref{ass:Lyapunov} and \ref{ass:Lipschitz_sys} hold and that $\bar{p}(0) = 0$.
	Further assume, for all $x \in \mathbb{X}_{\bar V}$, that Assumption~\ref{ass:c3} holds and that $\bar{p}(x)$ satisfies Assumption~\ref{ass:kkt}.
	Consider a constant $a \in (0,1)$ for Theorem~\ref{thm:dsqp}. 
	For all $x \in \mathbb{X}_{\bar V}$, denote the dSQP convergence radius and contraction factor from Theorem~\ref{thm:dsqp} by $\varepsilon(x)$ and $\bar{a}_p(x)$, compute the constants $d_2(x)$ and $a_w(x)$ according to Lemmas~\ref{lem:d2} and~\ref{lem:a1}, and assume the number of inner iterations $l_\mathrm{max}$ satisfies~\eqref{eq:lmax}.
	Let $\hat{r}_p = \min_{x \in \mathbb{X}_{\bar V}} \varepsilon(x)$, $a_p = \max_{x\in \mathbb{X}_{\bar{V}}} \bar{a}_p(x)$, and compute $\bar{\delta}$ and $\tilde{r}_p$ via~\eqref{eq:RTIsuff}.
	Then, the following holds.

	If the control sampling interval satisfies $\delta \leq \bar{\delta}$,
	then the origin is a locally exponentially stable equilibrium with region of attraction	
	\begin{equation*}
	\Sigma = \left\{ (x,p^{k_\mathrm{max}}) \in \mathbb{R}^{n_x + n_p} \left| \, x \in \mathbb{X}_{\bar{V}}, \left\| p^{k_\mathrm{max}} - \bar{p}(x) \right\| \leq \tilde r_p \right. \right\}
	\end{equation*}	
	of the closed-loop system-optimizer dynamics~\eqref{eq:sysoptdyn}. \hfill $\square$
\end{thm}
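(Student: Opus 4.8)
The plan is to invoke the centralized RTI stability result, Lemma~\ref{lem:rti}, whose hypotheses are Assumptions~\ref{ass:Lyapunov}--\ref{ass:Lipschitz_sys}. Among these, Assumptions~\ref{ass:Lyapunov} and~\ref{ass:Lipschitz_sys} are assumed directly, so the work reduces to verifying the Lipschitz controller Assumption~\ref{ass:Lipschitz_z} and the q-linear optimizer convergence Assumption~\ref{ass:qlin} for the optimizer at hand, namely dSQP (Algorithm~\ref{alg:d-SQP}). Once both hold, the closed loop is exactly the system-optimizer dynamics~\eqref{eq:sysoptdyn} with $\Phi$ generated by $k_\mathrm{max}$ outer dSQP iterations, and Lemma~\ref{lem:rti} delivers the conclusion verbatim.

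The key step is to establish Assumption~\ref{ass:qlin} from Theorem~\ref{thm:dsqp}. Fixing the design constant $a \in (0,1)$ and a state $x \in \mathbb{X}_{\bar{V}}$, the KKT point is $p^\star = \bar{p}(x)$, which satisfies Assumption~\ref{ass:kkt} by hypothesis. Since $l_\mathrm{max}$ satisfies~\eqref{eq:lmax} with the $x$-dependent constants $d_2(x)$ and $a_w(x)$ from Lemmas~\ref{lem:d2} and~\ref{lem:a1}, Theorem~\ref{thm:dsqp} yields a radius $\varepsilon(x) > 0$ and a rate $\bar{a}_p(x) \in (a,1)$ such that every dSQP run initialized in $\mathcal{B}(\bar{p}(x), \varepsilon(x))$ contracts q-linearly toward $\bar{p}(x)$ at rate $\bar{a}_p(x)$ in the outer iteration index. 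Identifying each outer dSQP iteration with one optimizer iteration $k$ in Assumption~\ref{ass:qlin} gives precisely the per-iteration contraction required there, but with $x$-dependent constants. To uniformize, I would first note that $\mathbb{X}_{\bar{V}}$ is compact: the lower bound $a_1 \|x\|^2 \leq V(x)$ in Assumption~\ref{ass:Lyapunov} renders the sublevel set bounded, and continuity of $V$ renders it closed. Because the problem data in NLP~\eqref{eq:sepForm} is $C^3$ (Assumption~\ref{ass:c3}) and, as argued below, $\bar{p}$ is continuous, the constants entering $\varepsilon(x)$ and $\bar{a}_p(x)$ depend continuously on $x$, so the extrema $\hat{r}_p = \min_{x \in \mathbb{X}_{\bar{V}}} \varepsilon(x)$ and $a_p = \max_{x \in \mathbb{X}_{\bar{V}}} \bar{a}_p(x)$ are attained, giving $\hat{r}_p > 0$ and $a_p < 1$. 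With these uniform constants, Assumption~\ref{ass:qlin} holds.

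For Assumption~\ref{ass:Lipschitz_z}, the identity $\bar{p}(0) = 0$ is assumed, so only Lipschitz continuity of $x \mapsto \bar{p}(x)$ remains. Here I would reuse the sensitivity machinery already invoked in the proof of Lemma~\ref{lem:admmConv}: at every $x \in \mathbb{X}_{\bar{V}}$ the KKT point $\bar{p}(x)$ satisfies strict complementarity, LICQ, and the strengthened SOSC of Assumption~\ref{ass:kkt}, while the state $x$ enters NLP~\eqref{eq:sepForm} only through the smooth initial-condition data. The Basic Sensitivity Theorem~\cite[Thm. 3.2.2]{Fiacco1983} then makes $\bar{p}$ continuously differentiable, hence locally Lipschitz, in a neighborhood of each $x$; a single constant $L_{p,x}$ valid on all of $\mathbb{X}_{\bar{V}}$ follows again from compactness. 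This also supplies the continuity of $\bar{p}$ used in the previous paragraph.

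With Assumptions~\ref{ass:Lyapunov}--\ref{ass:Lipschitz_sys} now in force and $\hat{r}_p$, $a_p$ fixed, I would feed them into~\eqref{eq:RTIsuff} to obtain $\bar{\delta}$ and $\tilde{r}_p$ and apply Lemma~\ref{lem:rti}: for $\delta \leq \bar{\delta}$ the origin of~\eqref{eq:sysoptdyn} is locally exponentially stable with region of attraction $\Sigma$ as stated. I expect the main obstacle to be the uniformization in the second paragraph, that is, proving $\hat{r}_p > 0$ and $a_p < 1$ rather than merely the pointwise $\varepsilon(x) > 0$ and $\bar{a}_p(x) < 1$. This hinges on showing that the BST-derived radii and contraction factors depend (semi)continuously on $x$ through $\bar{p}(x)$, the KKT matrix $K^k$, and the reduced Hessian; establishing this continuity, together with the continuity of $\bar{p}$, is the delicate part, whereas the final invocation of Lemma~\ref{lem:rti} is then immediate.
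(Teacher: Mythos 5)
Your proposal is correct and follows essentially the same route as the paper's proof: verify the q-linear optimizer convergence Assumption~\ref{ass:qlin} via the dSQP convergence Theorem~\ref{thm:dsqp}, verify the Lipschitz controller Assumption~\ref{ass:Lipschitz_z} via the Basic Sensitivity Theorem~\cite[Thm. 3.2.2]{Fiacco1983}, and then invoke Lemma~\ref{lem:rti}. The only difference is that you explicitly work out the compactness-based uniformization of $\varepsilon(x)$ and $\bar{a}_p(x)$ over $\mathbb{X}_{\bar{V}}$, a point the paper subsumes into the theorem's hypotheses through the definitions $\hat{r}_p = \min_{x \in \mathbb{X}_{\bar V}} \varepsilon(x)$ and $a_p = \max_{x \in \mathbb{X}_{\bar V}} \bar{a}_p(x)$ without further comment.
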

\begin{proof}
	Consider the reformulation of OCP~\eqref{docp} as a partially separable NLP~\eqref{eq:sepForm}.
	According to the dSQP convergence Theorem~\ref{thm:dsqp}, Algorithm~\ref{alg:d-SQP} converges q-linearly to the primal-dual solution $\bar p(x(t))$ for any initialization $p^0(t) \in \mathcal{B}(\bar p(x(t)),\hat{r}_p)$ and for all $x \in \mathbb{X}_{\bar{V}}$.
	Hence, dSQP satisfies the q-linear convergence Assumption~\ref{ass:qlin}.
	The Lipschitz continuity of the map $\bar p$ follows from Assumption~\ref{ass:kkt} via the BST~\cite[Thm. 3.2.2]{Fiacco1983}, because the functions in NLP~\eqref{eq:sepForm} are twice continuously differentiable and because $\nabla g$ is continously differentiable with respect to $x(t)$.
	That is, Assumption~\ref{ass:Lipschitz_z} holds.
	The exponential stability of the origin for the closed-loop system-optimizer dynamics therefore follows from Lemma~\ref{lem:rti}.
\end{proof}

\begin{rem}[Region of attraction]\label{rem:roa}
	If the initial state $x(0)$ and the iterate~$p^{k_\mathrm{max}}(0)$ returned by dSQP in the first NMPC step lie inside the region of attraction, i.e. if ${(x(0),p^{k_\mathrm{max}}(0)) \in \Sigma}$, then the system-optimizer dynamics converge to the origin.
	The set $\mathbb{X}_{\bar{V}} = \{x \in \mathbb{R}^{n_x} \; | \; V(x) \leq \bar{V}\}$ is the level set over which Assumptions~\ref{ass:Lyapunov},~\ref{ass:Lipschitz_sys},~\ref{ass:c3}, and~\ref{ass:kkt} hold and $\tilde{r}_p$ in the definition of $\Sigma$ can be computed via~\eqref{eq:RTIsuff}. \hfill $\square$
\end{rem}

\begin{rem}[Continuity of $V$, $\sqrt{V}$, and $\bar{p}$]\label{rem:Lipschitz}
	Assumption~\ref{ass:Lyapunov} requires that $V$ is continuous for all $x \in \mathbb{X}_{\bar V}$ and that $\sqrt{V}$ is Lipschitz continuous at the origin.
	These conditions hold if $V$~is Lipschitz continuous over $\mathbb{X}_{\bar{V}}$ and twice continuously differentiable at the origin~\cite{Zanelli2021}. This is the case if Assumption~\ref{ass:kkt} holds for all $x \in \mathbb{X}_{\bar V}$~\cite[Thm. 3.4.1]{Fiacco1983}.
	The Lipschitz continuity of $\bar{p}$ in Assumption~\ref{ass:Lipschitz_z} follows from the differentiability Assumption~\ref{ass:c3} and the regularity Assumption~\ref{ass:kkt} via~\cite[Thm. 3.2.2]{Fiacco1983}. \hfill $\square$	
\end{rem}

\begin{rem}[Stability close to global optima]
	The careful reader will have noticed that the stability result of Theorem~\ref{thm:drti} requires the optimizer initialization to be close to a global minimum of the OCP.
	This assumption carries over from the centralized result~\cite[Thm. 25]{Zanelli2021}, which we invoke to prove stability.
	In practice, one will only obtain local minima for non-convex NLPs.
	However, we observe stability in simulations despite the convergence to local minima.	
	A key challenge when guaranteeing stability of MPC under local minima is that the OCP value function $V$ is unknown. Suboptimal MPC addresses this challenge for OCPs with terminal constraints by exploiting the feasibility of local minima~\cite[Sec. 2.7]{Rawlings2019}. However, RTI schemes typically favor rapid computation at the cost of asymptotic feasibility. Thus, and to the best of our knowledge, centralized and decentralized RTI stability guarantees which only require optimizer initializations close to local minima are yet unavailable.
	\hfill $\square$
\end{rem}

\begin{rem}[Relation to~\cite{Hours2016}]\label{rem:Hours}
	Real-time distributed NMPC is also addressed in~\cite{Hours2016} and we comment on similarities and differences to our approach.
	Both schemes share three important properties:
	First, they do not require a coordinator.
	Second, they apply an a-priori fixed number of optimizer iterations in each NMPC step to find a suboptimal control input.
	Third, the optimizer contraction for subsequent NMPC steps is proven via two key ingredients: (a) q-linear optimizer convergence for sufficiently many inner iterations, i.e., $l_\text{max}$ ADMM iterations per SQP step for dSQP and $M$ primal iterations for Algorithm~1 in~\cite{Hours2016}, where $M$ is in the notation of~\cite{Hours2016}, and (b), sufficient proximity between subsequent state measurements, i.e., by choosing the sampling interval sufficiently small or by direct assumption~\cite{Hours2016}.
	
	On the other hand, the schemes differ in the employed optimization algorithm, in the order in which computations are executed on the subsystems, and in the obtained convergence result.
	The computationally expensive Step~\ref{admm-step:1} in ADMM can be executed by all subsystems in parallel. In contrast, the decomposition scheme in~\cite{Hours2016} assigns subsystems into groups and the groups execute computation steps in sequence.
	And whereas Theorem~\ref{thm:drti} proves the local exponential stability of system and optimizer, \cite[Thm. 5]{Hours2016} bounds the optimizer suboptimality in closed-loop without addressing the system asymptotics.
	However, we invoke the more recent stability results from~\cite{Zanelli2021} to prove stability. 
	Due to the q-linear optimizer convergence of Algorithm~1 of~\cite{Hours2016}, we conjecture that Theorem~\ref{thm:drti} also holds under suitable assumptions if dSQP is replaced  with the approach from~\cite{Hours2016}. \hfill $\square$
\end{rem}

\begin{rem}[Application to linear-quadratic DMPC]
	If NLP~\eqref{eq:sepForm} is a convex QP, dSQP reduces to ADMM.
	In this case, the q-linear convergence of ADMM yields closed-loop stability for $l_\mathrm{max} \geq 1$ and sampling intervals below~$\bar{\delta}$.
	This follows from Lemma~\ref{lem:rti} by replacing the convergence in $p$ in Assumption~\ref{ass:qlin} with the ADMM convergence~\eqref{eq:ADMMqlin}, cf.~\cite{Zanelli2021b}. \hfill $\square$
\end{rem}

\subsection{Implementation Aspects}

We next comment on the communication requirements of the proposed RTI scheme and on the choice of a suitable Hessian for constructing QP~\eqref{eq:QP}.

With respect to communication, the decentralized RTI scheme requires each subsystem to exchange messages with all neighbors in order to execute Step~\ref{admm-step2} of Algorithm~\ref{alg:admm}.
All other steps in ADMM and dSQP can be carried out without communication.
As mentioned in Remark~\ref{rem:averaging}, this decentralized ADMM implementation is due to the fact that Step~\ref{admm-step2} is equivalent to an averaging procedure~\cite[Ch. 7]{Boyd2011}.
Essentially, the step requires each subsystem to exchange predicted state trajectories with neighbors, compute an averaged state trajectory, and then to exchange the averaged trajectories with neighbors.
Details about the implementation of the averaging procedure in a DMPC context are provided in~\cite{Bestler2019,Stomberg2023}.
In this context, we comment on the relation between the dual variables $\lambda$ and $\gamma$.
Observe that $\lambda$ is implicitly updated by ADMM in~\eqref{eq:admm2}, but only appears in the initialization Step~\ref{dsqp-stp:1} of Algorithm~\ref{alg:d-SQP}. 
Apart from this initialization, $\lambda$ is only needed in the theoretical convergence analysis and not in the implementation of dSQP.
This facilitates the averaging step for computing $z^{l+1}$ as outlined in Appendix~\ref{app:example}, where the explicit computation of $\lambda^{l+1}$ is omitted. 
ADMM ensures $\gamma_i^{l+1} = E_i^\top \lambda^{l+1}$ for all $i \in \mathcal{S}$ and for all $k,l \in \mathbb{N}_0$.
This can be used in closed-loop control to warm start $\gamma_i^0$ in Step~\ref{dsqp-stp:1} of dSQP with $\gamma_i^{k_\mathrm{max}}$ from the previous NMPC step.
Thus, an initialization $\lambda^0$ is only required in the first control step.
Possible initializations for the first control step that often work well are $\lambda^0 = 0$ or $\lambda^0 = \lambda^\star$, if available.

With respect to the Hessian, the stability analysis holds for the exact Hessian $H_i = \nabla_{z_iz_i}^2 L_i$, because this guarantees local q-linear convergence of dSQP such that the controller meets Assumption~\ref{ass:qlin}.
However, the exact Hessian may not always be a favorable choice, because $\nabla_{z_iz_i}^2 L_i$ can be indefinite outside the dSQP convergence region $\mathcal{B}(p^\star,\varepsilon)$ and because $\nabla_{z_iz_i}^2 L_i$ must be evaluated in each dSQP iteration.
Instead, the Constrained Gauss-Newton (CGN) method, first introduced as the generalized Gauss-Newton method~\cite{Bock1983}, provides an alternative which is often effective in NMPC~\cite{Messerer2021}.
Consider a specialized version of NLP~\eqref{eq:sepForm} with least squares objectives
\begin{equation}\label{eq:GNobj}
f_i(z_i) = \frac{1}{2} \| M_i z_i - m_i \|_2^2,
\end{equation} where the matrices $M_i \in \mathbb{R}^{n_i \times n_i}$ are positive definite and the vectors $m_i \in \mathbb{R}^{n_i}$ are constants for all $i \in \mathcal{S}$. 
Such objectives commonly occur in NMPC for setpoint stabilization, see Section~\ref{sec:numer}.
The CGN method builds QP~\eqref{eq:QP} with the Gauss-Newton (GN) Hessian approximation $H_i = B_i \doteq M_i^\top M_i$ for all $i \in \mathcal{S}$.
Crucially, the matrices $B_i$ are constant, positive definite, and can be evaluated offline.

However, stability guarantees for our proposed RTI scheme when using the GN Hessian are yet unavailable.
While the CGN method is locally guaranteed to converge q-linearly in some norm~\cite{Messerer2021}, the convergence is not necessarily q-linear in the Euclidean norm $\|p^k - p^\star\|$.
This impedes a straight-forward stability proof via Lemma~\ref{lem:rti} as Assumption~\ref{ass:qlin} may not hold.

\section{Numerical Results}\label{sec:numer}

We consider the setpoint stabilization of coupled inverted pendulums. Each pendulum is attached to a cart and the carts are coupled via springs as shown in Figure~\ref{fig:pendulum}.
Let $q_i$ be the cart position and $\varphi_i$ be the angular deviation from the upright position for pendulum $i$.
The state of pendulum $i$ is $x_i = (q_i, \dot q_i, \varphi_i, \dot \varphi_i) \in \mathbb{R}^4$ and the input is the force $u_i = F_i \in [-100\,\text{N},100\,\text{N}]$ applied to the cart.
The carts are connected in a chain where each cart is coupled to its neighbors via a spring with stiffness $k = 0.1\,$N/m.
Let $M_{\mathrm{c}} = 2\,$kg and $m = 0.25\,$kg denote the masses of each cart and pendulum, respectively, let $l = 0.2\,$m denote the length of each pendulum, and denote the gravity of earth by $g = 9.81\,\text{m}/\text{s}^2$.
The continuous-time equations of motion of pendulum $i$ read
$\ddot{q}_i =  ( u_i + \frac{3}{4} m g \sin(\varphi_i) \cos(\varphi_i) - \frac{m l}{2} \dot \varphi_i^2 \sin(\varphi_i) + F_i^\mathrm{left} + F_i^\mathrm{right} ) / (M_{\mathrm{c}} + m - \frac{3}{4} m (\cos(\varphi_i))^2) $ and ${\ddot{\phi}_i = \frac{3g}{2l} \sin(\varphi_i) + \frac{3}{2l} \cos(\varphi_i) \ddot q_i}$.
If applicable, the coupling forces are $F_i^\mathrm{left} \doteq k (q_{i-1} - q_i)$ and $F_i^\mathrm{right} \doteq k (q_{i+1} - q_i)$.
We discretize the continuous-time dynamics using the explicit Runge-Kutta fourth-order method~(RK4) with a shooting interval ${h = 40}\,$ms and obtain the discrete-time model $f_i^h : \mathbb{R}^{4} \times \mathbb{R} \times \mathbb{R}^{n_i^\mathrm{in}} \rightarrow \mathbb{R}^4$.
For this discretization, we not only keep the control $u_i$ constant over the integration step, but also the neighboring positions $q_{i-1}$ and $q_{i+1}$ when evaluating the system dynamics.
This simplification preserves the coupling structure, i.e., the discrete-time dynamics of each subsystem only depend on the left and right neighbors.
However, the trade-off associated with this simplification is that the integration order with respect to the neighboring states reduces to one. 
A more accurate alternative is the distributed multiple shooting scheme~\cite{Savorgnan2011} which expresses the state trajectories as linear combinations of basis functions, e.g., Legendre polynomials, and matches the basis function coefficients between neighboring subsystems.

\begin{figure}
	\centering
	\includegraphics[width=0.8\columnwidth]{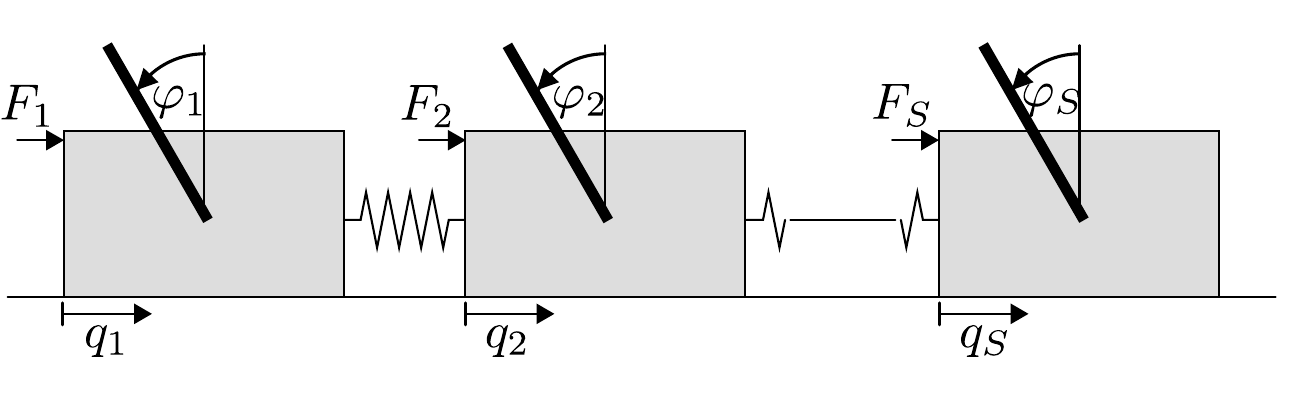}
	\caption{Coupled inverted pendulums on carts.}\label{fig:pendulum}
\end{figure}

\subsection{Optimal Control Problem Design}\label{sec:OCPdesign}

We design OCP~\eqref{docp} such that the value function is a Lyapunov function which satisfies inequalities~\eqref{eq:lyapReq}. 
For the sake of reducing the computational burden, our design does not enforce a terminal constraint in the OCP~\cite{Limon2006}. Thus, for the pendulum example, all inequality constraints are input constraints.
However, if desired, terminal constraints could also be considered. 
We choose separable quadratic stage costs $\ell_i(x_i,u_i) = x_i^\top Q_i x_i/2 +  u_i^\top R_i u_i/2,$ where we set $R_i = 0.001$ and take $Q_i = \text{diag}(1,10^{-4},10,10^{-4})$ from~\cite{Mills2009}.
We further design $V_{\text{f},i}(x_i) = {\beta_2}x_i^\top P_i x_i / 2$, where we find $\beta_2 \geq 1$ and $P_i \in \mathbb{R}^{4 \times 4}$ as follows.
First, we neglect coupling between neighbors, discretize the dynamics with RK4 at sampling interval $\delta = 40\,$ms, and linearize at the origin to obtain a model $(A_i,B_i)$ with $A_i \in \mathbb{R}^{4 \times 4}$ and $B_i \in \mathbb{R}^{4 \times 1}$ for all $i \in \mathcal{S}$. 
This allows to solve the algebraic Riccati equation \[
P_i = A_i^\top P_i A_i - (A_i^\top P_i B_i)(R_i + B_i^\top P_i B_i)^{-1} (B_i^\top P_i A_i) + Q_i
\] individually for each subsystem to obtain $P_i$ and and the terminal control law $u_i = K_i x_i$, where $K_i \doteq - (B_i^\top P_i B_i + R_i)^{-1} (B_i^\top P_i A_i)$.
Next, we discretize the centralized dynamics $f^\mathrm{c}$ including coupling with RK4 at $\delta = 40\,$ms and linearize at the origin to obtain the centralized system $(A,B)$ with $A \in \mathbb{R}^{4S \times 4S}$ and $B \in \mathbb{R}^{4S \times S}$.
For the example at hand, the terminal controller $K = \text{diag}(K_1,\dots,K_S)$ also stabilizes the coupled centralized system, i.e., the matrix $A_K = A + BK$ is Schur stable. 
Denote the centralized weight matrices as $Q \doteq \text{diag}(Q_1,\dots,Q_S)$, $R \doteq \text{diag}(R_1,\dots,R_S)$, and $P \doteq \text{diag}(P_1,\dots,P_S)$ and define  $Q_K \doteq Q + K^\top R K$.
To meet the sufficient decrease condition
\begin{equation*}
\Vf(f^\delta(x,Kx)) - \Vf(x) \leq - \ell(x,Kx),
\end{equation*} we increase $\beta_2$ until the matrix $\Delta Q \doteq \tilde{Q} - Q_K$ is positive definite, where $\tilde{Q} \doteq \beta_2 (P - A_K^\top P A_k)/\mu$ and where the paramter $\mu$ is in the notation of~\cite[Sec. 2.5.5]{Rawlings2019}.
We set $\mu = 1.01$ and obtain $\beta_2 = 1.1$.

\begin{rem}[Decentralized $V_\mathrm{f}$ design]
	The matrices $P_i$ and terminal controllers $K_i$ are computed by neglecting the coupling of the dynamics and by solving the Riccati equation for each subsystem individually.
	This simplification is not guaranteed to stabilize coupled linearized systems in general, but the design stabilizes the chain of pendulums considered here.
	Hence, the matrix $\Delta Q$ is positive definite for sufficiently large $\beta_2$.
	A similar design approach is suggested in~\cite[Remark 4]{Stewart2011}.
	More elaborate decentralized linear-quadratic control designs can be found in the classic textbook~\cite{Siljak1991}. \hfill $\square$
\end{rem} 

\subsection{Swing-up Simulation}

We consider the swing up of $S = 20$ pendulums to analyze the efficacy of the proposed RTI scheme.
Our Matlab implementation is available online and uses~CasADi to evaulate the derivatives in Step~\ref{stp:3} of dSQP~\cite{Andersson2019}.\footnote{\url{https://github.com/OptCon/real-time-dmpc}}
The quadratic subproblems in Step~\ref{admm-step:1} of ADMM are solved using~OSQP with tolerances $\epsilon_\text{abs} = \epsilon_\text{rel} = \epsilon_\text{prim} = \epsilon_\text{dual} = 10^{-8}$, where the tolerances are in the notation of~\cite{Stellato2020}.
After the control input is computed in each NMPC step, the centralized system is simulated using RK4 with integration step size equal to the control sampling interval $\delta$ to obtain the system state at the next sampling instant.
We select the penalty parameter $\rho$ of ADMM from the set $\{0.1, 1, 10, 100\}$ and choose $\rho = 1$.

We compare three test cases with varying initial conditions, OCP designs, and solver settings as summarized in Table~\ref{tab:testCases}. 
The pendulums initially rest in the lower equilibrium position $x_i(0) = (q_i(0),0,\pi,0)$, where the initial cart displacement is given in Table~\ref{tab:testCases}.
The goal is to steer all pendulums to the upright equilibrium position at $x_i = 0$.
The OCP is designed with quadratic weights as described in Subsection~\ref{sec:OCPdesign}.
The time horizon is chosen as $T = 0.4\,$s and the scaling factor in the objective is set to $\beta = 1$.
The further parameters in Table~\ref{tab:testCases} are the shooting interval $h$ in the OCP, the discrete-time horizon $N = T/h$, the chosen Hessian for QP~\eqref{eq:QP}, the maximum number of SQP iterations per NMPC step $k_\text{max}$, the maximum number of ADMM iterations per SQP iteration $l_\text{max}$, the total number of decision variables in the centralized OCP $n$, the total number of subsystem equality constraints $n_g$, the total number of subsystem inequality constraints $n_h$, and the number of consensus constraints $n_c$.
For all cases, the control sampling interval is set to $\delta = 40\,$ms.
For cases one and two, we choose the exact Hessian if $\nabla_{z_iz_i}^2 L_i$ is positive definite and otherwise we select the GN Hessian.
For case three, we always choose the GN Hessian approximation.
A quadratic penalty term with weight $10^{-5}$ is added to the objective~\eqref{docp:obj} for all state copies to meet Assumption~\ref{ass:kkt}~(ii).
We initialize dSQP in the first NMPC step at a solution found by IPOPT~\cite{Wachter2006}.
In all subsequent NMPC steps, we initialize dSQP with the solution produced in the previous control step.

Table~\ref{tab:sim_results} summarizes the simulation results.
We analyze the averaged closed-loop control performance
\begin{equation*}
J_\text{cl} \doteq \frac{1}{t_n+1}\sum_{t = 0}^{t_n} \sum_{i \in \mathcal{S}}  \ell_i(x_i(t),u_i(t)),
\end{equation*} where $t_n \doteq T_f/\delta$ and where $T_f = 10\,$s is the simulated time span.
Furthermore, we analyze the dSQP execution time per NMPC step on a desktop computer. 
We run each test case ten times to account for varying execution times in between runs and we take two different types of time measurements in each NMPC step.
In the first type, we measure the total execution time of one call to dSQP, which is summarized in the second and third columns from the left in Table~\ref{tab:sim_results}.
This includes running the specified number of SQP and ADMM iterations for all pendulums in series as well as the costly creation and destruction of intermediate data structures.
This is due to the prototypical nature of our Matlab implementation and would be avoided in embedded applications.
In the second type of time measurement, summarized in columns four to six of Table~\ref{tab:sim_results}, we measure only imperative code blocks that cannot be avoided in an efficient implementation, and we take measurements for each subsystem individually.
This includes calls to~CasADi for evaluating derivatives, calls to~OSQP for updating and solving QPs in ADMM, and computing Steps~\ref{admm-step2}--\ref{admm-step3} of ADMM.
Column six summarizes the percentage of per-subsystem solve times that were below the sampling interval.

The closed-loop system and optimizer trajectories are shown in Figures~\ref{fig:swingup_sys_1}--\ref{fig:swingup_sys_3}.
The top three plots in each figure display the cart positions, pendulum angles, and control inputs.
The optimizer evolutions in the bottom plots show the convergence of dSQP to the OCP solutions $p^\star$ found by~IPOPT.

In all test cases, the pendulums reach the upright equilibrium position while satisfying the input constraints.
The initial cart displacements in the first test case are less challenging than in the second case.
In fact, the increase in $k_\text{max}$ from the first to the second test case is necessary for a successful swing up due to the different initial conditions. 
That is, the optimizer settings of case one do not provide stability in simulation for case two.
While the per-subsystem computation time in the first test case is below the sampling interval $\delta$, the computation time of the second test case would not be real-time feasible.\footnote{Non-negligible computation times below the sampling interval can be compensated by solving the OCP for the subsequent control input~\cite{Findeisen2006}.}
Therefore, we increase the shooting interval in the third test case to reduce the OCP size and we select the GN Hessian to reduce the time spent evaluating derivatives such that case three requires less iterations per NMPC step for a successful swing up.
On the other hand, this reduces performance as can be seen from the prolonged settling time and the increase in~$J_\text{cl}$.

\begin{rem}[Stability, shooting interval, and GN Hessian]
Test case three selects a larger shooting interval and the GN Hessian to accelerate the computations. 
In general, choosing a shooting interval $h > \delta$ is a well-known technique for reducing RTI computation times~\cite{Diehl2002} and we found the GN Hessian to work well in hardware experiments~\cite{Stomberg2023,Stomberg2025a}.
However, while we observe stability for the pendulum simulation, the guarantees provided by Theorem~\ref{thm:drti} only hold if $h = \delta$ and if the exact Hessian $H \doteq \nabla_{zz}L$ is chosen.\hfill $\square$
\end{rem}

The time measurements, obtained for a prototypical Matlab implementation, show that the per-subsystem computation times are mostly below the control sampling interval.
This does not imply that an efficient decentralized implementation would necessarily be real-time feasible, in particular if communication latencies add to the computation time.\footnote{Latency measurements for a decentralized dSQP implementation indicate that a worst-case latency of approximately $10\,$ms can be expected for running $k_\text{max}=1$ SQP and $l_\text{max}=6$ ADMM iterations in a laboratory setting~\cite{Stomberg2023}.}
However, the obtained computation times indicate that control sampling intervals in the millisecond range are conceivable via the decentralized RTI scheme.

\begin{table}\caption{Test case specifications. All cases consider a sampling interval $\delta = 40\,$ms.}\label{tab:testCases}
	\centering
	\scalebox{0.95}{
		\begin{tabular}{ c c c c c c c c c c c }
			Case & $q_i(0)$ & $h\,$[ms] & $N$ & $H$ & $k_\text{max}$ & $l_\text{max}$ & $n$ & $n_g$ & $n_h$ & $n_c$\\
			\hline
			1 & $-1^{i}$ &  40 & 10 & exact & 1  & 6 & 1518 & 880 & 440 & 418\\
			2 & $i$ & 40 & 10 & exact & 3 & 6 & 1518 & 880 & 440 & 418\\
			3 & $i$ & 57 & 7 & GN & 2 & 3 & 1104 & 640 & 320  & 304
		\end{tabular}
	}
\end{table}

\begin{table}\caption{Computation times per NMPC step and closed-loop performance $J_\text{cl}$ for a prototypical Matlab implementation. The fast computation times in test cases one and three demonstrate the real-time feasibility of decentralized real-time iterations.}\label{tab:sim_results}
	\centering
	\scalebox{0.95}{
		\begin{tabular}{ c c c c c c c }
			Case & \multicolumn{2}{c}{all subsystems combined} &  \multicolumn{3}{c} {per subsystem} & $J_\text{cl}$\\
			& median [ms] & max. [ms] & median [ms] & max. [ms] & $\leq \delta$ & \\
			\hline
			1 & 239.07 & 448.19 & 12.03 & 29.24 & 100\,\% & 65.86\\
			2 & 697.04 & 2394.02 & 35.10 & 210.38 & 93.54\,\% & 156.05\\
			3 & 227.06 & 746.51  & 10.76 & 60.25 & 99.90\,\% & 180.66
		\end{tabular}
	}
\end{table}

\begin{figure}
	\includegraphics[width=\columnwidth]{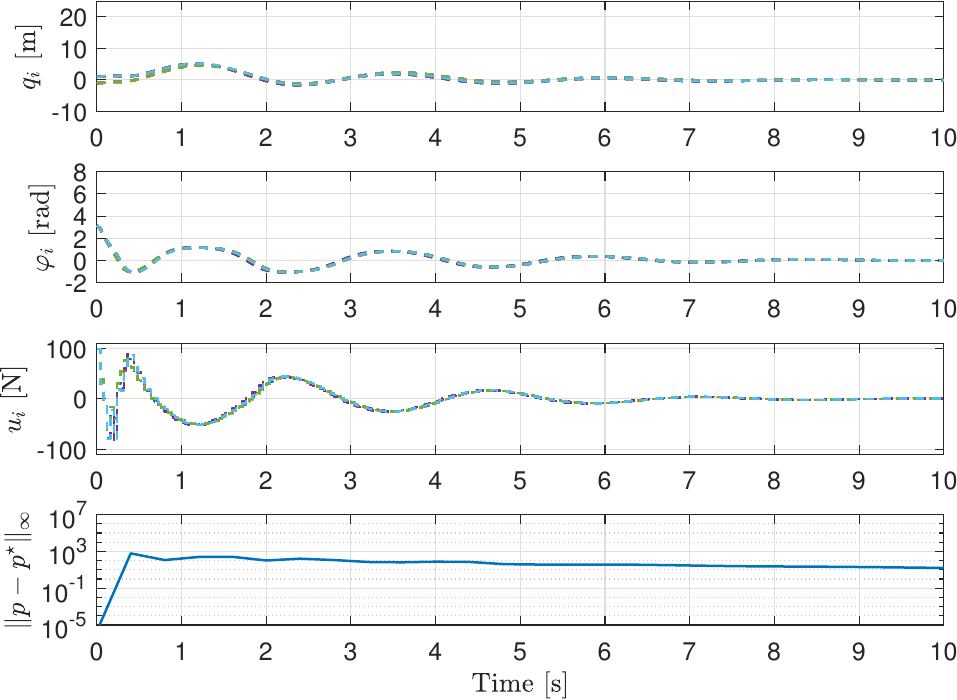}
	\caption{Test case 1: closed-loop system and optimizer convergence with 6 ADMM iterations per NMPC step.}\label{fig:swingup_sys_1}
\end{figure}

\begin{figure}
	\includegraphics[width=\columnwidth]{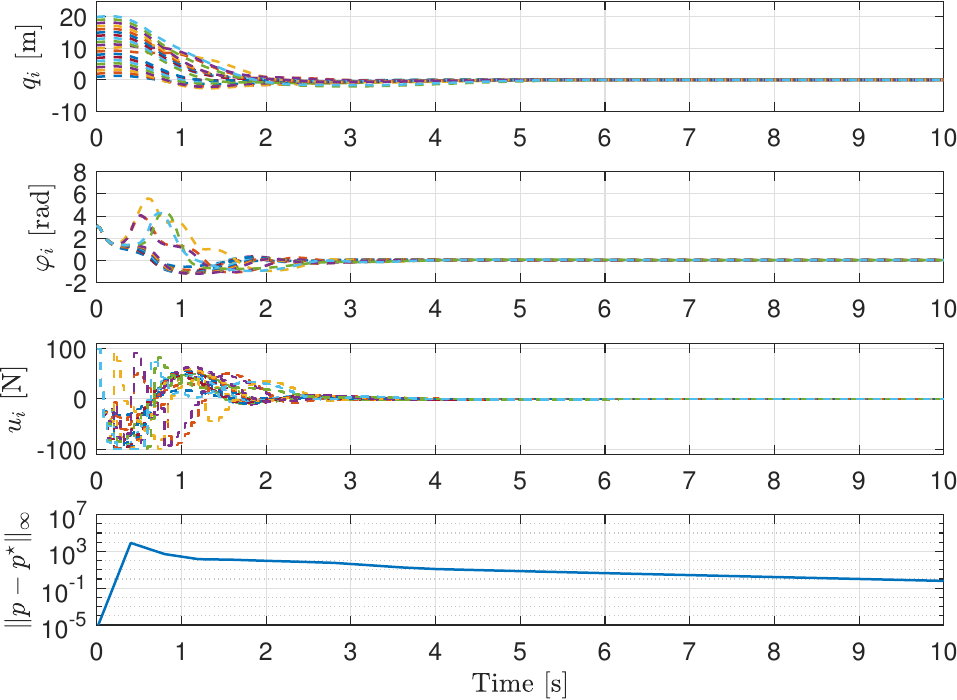}
	\caption{Test case 2: closed-loop system and optimizer convergence for a challenging initial condition with 18 ADMM iterations per NMPC step.}\label{fig:swingup_sys_2}
\end{figure}

\begin{figure}
	\includegraphics[width=\columnwidth]{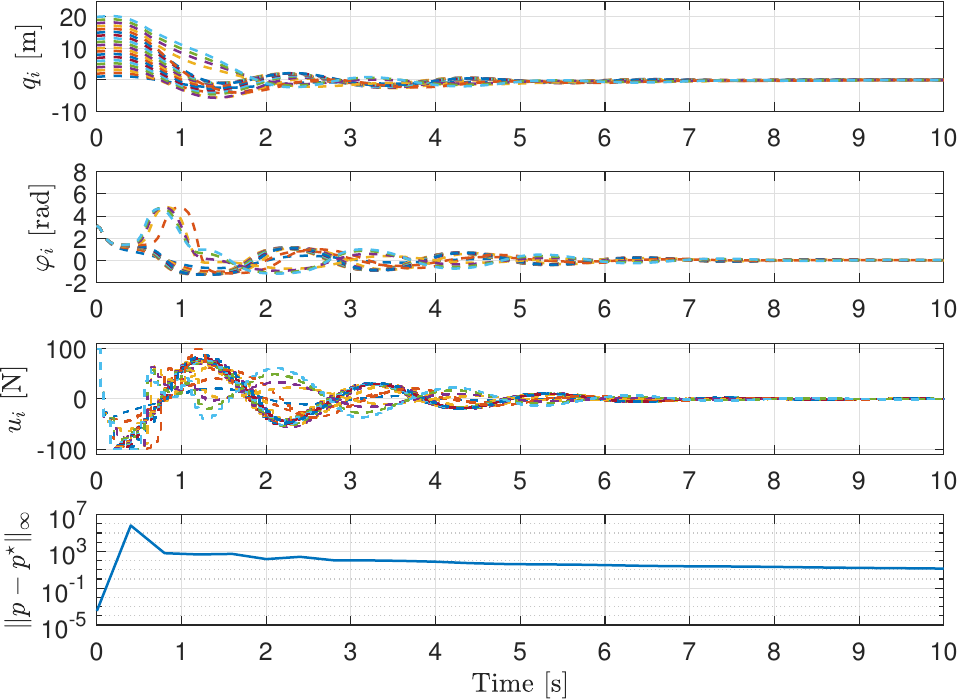}
	\caption{Test case 3: closed-loop system and optimizer convergence with 6 ADMM iterations per NMPC step.}
	\label{fig:swingup_sys_3}
\end{figure}

\subsection{Parameter Estimation and Validity of Assumptions}

The OCP design discussed in subsection~\ref{sec:OCPdesign} meets the value function requirements~\eqref{eq:lyapReq} in Assumption~\ref{ass:Lyapunov} and the pendulum dynamics satisfy Assumption~\ref{ass:Lipschitz_sys}.
Numerical a-posteriori analyses further show that the KKT points found by~IPOPT satisfy the regularity Assumption~\ref{ass:kkt} if the system is close to the setpoint.
The continuity conditions in Assumptions~\ref{ass:Lyapunov} and~\ref{ass:Lipschitz_z} thus hold for the identified local minima. 
Finally, Assumption~\ref{ass:c3} is satisfied, because the pendulum dynamics are sufficiently smooth.

The bounds~\eqref{eq:lmax} on the ADMM iterations $l_\mathrm{max}$ and~\eqref{eq:RTIsuff} on $\bar{\delta}$ offer a compromise between optimiality, i.e., large $l_\mathrm{max}$, and sampling frequency, i.e., small $\bar{\delta}$.
To obtain stability for a large sampling interval $\bar{\delta}$ via~\eqref{eq:RTIsuff}, the outer contraction factor $a_p$ must be small. 
By Theorem~\ref{thm:dsqp}, this requires $a$ to be small, which results in more ADMM iterations $l_\mathrm{max}$ via~\eqref{eq:lmax}.
That is, for low sampling frequencies, ADMM must solve the SQP subproblems to greater accuracy.
To estimate the number of ADMM iterations $l_\mathrm{max}$ sufficient to guarantee stability for the sampling interal $\delta = 40\,$ms chosen in the simulations, we proceed as follows:
We estimate the constants for calculating $\delta_5$ in Appendix~\ref{sec:app-rti} via simulations close to the setpoint with the ideal centralized NMPC feedback law~$\kappa_\mathrm{c}(x)$ using IPOPT, similarly to~\cite{Zanelli2021}.
These simulations yield constants $a_1 = 0.5326$, $a_2 = 7.1530$, $a_3 = 0.2113$, $L_{f,x}^\mathrm{c} = 86.2704$, $L_{f,u}^\mathrm{c} = 3.6685$, $L_{p,x} = 51.3647$, $L_{V,x} = 1.7908$, and $L_{V,p} = 207.1090$.
Then, the QP approximation of the OCP at the setpoint yields $c_1 = 1.7321$ and $c_2 = 251.5737$.
Finally, we sample the ADMM LTI dynamics~\eqref{eq:admmlti} in simulations for random $w$ and obtain $a_w = 0.9989$.
As a result, $l_\mathrm{max} = 24007$ ADMM iterations are sufficient to guarantee stability for~$\delta_5 = 40\,$ms.
Compared with the settings in Table~\ref{tab:testCases}, this estimate is quite conservative.
Here, the conservatism primarily stems from the Lipschitz constants in the system dynamics and the contraction factor~$a_w$.
The conservatism could thus be reduced by more accurate estimates for these constants, for instance through tighter ADMM convergence guarantees or elaborate simulation.
In hardware experiments and simulations, we observe good control performance already for 2--30 ADMM iterations per MPC step, depending on the application~\cite{Stomberg2023,Stomberg2025a,Stomberg2025b}.

\section{Conclusion}

This paper has presented a novel decentralized RTI scheme for distributed NMPC based on dSQP.
The proposed scheme applies finitely many optimizer iterations per control step and does not require subsystems to exchange information with a coordinator.
Stability guarantees are proven for the system-optimizer dynamics in closed loop by combining centralized RTI stability guarantees with novel dSQP convergence results.
Numerical simulations demonstrate the efficacy of the proposed scheme for a chain of coupled inverted pendulums.
Future work will consider further mechatronic systems and hardware experiments.

\appendices
\section{Centralized RTI Stability}\label{sec:app-rti}

This section summarizes the steps for computing the sufficent sampling interval $\bar{\delta}>0$ and optimizer initialization radius $\tilde{r}_p > 0$ which guarantee stability of centralized RTIs~\cite{Zanelli2021}.
\begin{lem}[Lipschitz discrete-time dynamics~\cite{Zanelli2021}]\label{lem:lipschitzd}
	Let Assumption~\ref{ass:Lipschitz_sys} hold.
	Then, there exists a positive finite constant $\delta_1$ such that, if $x \in \mathbb{X}_{\bar{V}}$, $ p \in \mathcal{B}(\bar{p}(x),r_p')$, and $\delta \leq \delta_1$, then
	\begin{equation*}
	\left \| f^\delta(x,M_{u,p}p) - x \right \| \leq \delta \cdot \left( L_{f,x}^{\delta_1} \|x\| + L_{f,u}^{\delta_1} \left\| M_{u,p} p \right\| \right),
	\end{equation*} where $L_{f,x}^{\delta_1} \doteq e^{L_{f,x}^\mathrm{c} \delta_1} L_{f,x}^\mathrm{c}$ and  $L_{f,u}^{\delta_1} \doteq e^{L_{f,x}^\mathrm{c} \delta_1} L_{f,u}^\mathrm{c}$.
	Moreover, if $\delta \leq \delta_1$, then
	\begin{equation} \label{eq:xplusbound}
	\left\| f^\delta(x,u') - f^\delta(x,u) \right\| \leq \delta L_{f,u}^{\delta_1} \left\|u' - u\right\|
	\end{equation}
	for all $x \in \mathbb{X}_{\bar{V}}$, all $u' = M_{u,p}p'$, $u = M_{u,p}p$ such that $p,p' \in \mathcal{B}(\bar{p}(x),r_p')$.	
	\hfill $\square$
\end{lem}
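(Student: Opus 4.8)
The plan is to use that $f^\delta(x,u)$ is the time-$\delta$ value of the flow of the continuous-time dynamics under the frozen input $u$. Writing $\xi(\cdot)$ for the solution of $\dot\xi(s)=f^\mathrm{c}(\xi(s),u)$ with $\xi(0)=x$, we have $f^\delta(x,u)=\xi(\delta)$ together with the integral identity $\xi(s)-x=\int_0^s f^\mathrm{c}(\xi(\tau),u)\,d\tau$. Both claimed inequalities then reduce to Grönwall estimates for this identity, provided Assumption~\ref{ass:Lipschitz_sys} is available along the whole trajectory; the role of $\delta_1$ is precisely to guarantee the latter.

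For the first bound I would combine $f^\mathrm{c}(0,0)=0$ with Assumption~\ref{ass:Lipschitz_sys} to bound $\|f^\mathrm{c}(\xi(\tau),u)\|\le L_{f,x}^\mathrm{c}\|\xi(\tau)\|+L_{f,u}^\mathrm{c}\|u\|$ and then split $\|\xi(\tau)\|\le\|\xi(\tau)-x\|+\|x\|$. Setting $g(s)\doteq\|\xi(s)-x\|$ and $c\doteq L_{f,x}^\mathrm{c}\|x\|+L_{f,u}^\mathrm{c}\|u\|$ yields $g(s)\le cs+L_{f,x}^\mathrm{c}\int_0^s g(\tau)\,d\tau$. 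Since the forcing $cs$ is nonnegative and nondecreasing, the integral form of the Grönwall--Bellman inequality gives $g(s)\le cs\,e^{L_{f,x}^\mathrm{c}s}$; evaluating at $s=\delta\le\delta_1$ and bounding $e^{L_{f,x}^\mathrm{c}\delta}\le e^{L_{f,x}^\mathrm{c}\delta_1}$ reproduces exactly the stated bound with $u=M_{u,p}p$ and the constants $L_{f,x}^{\delta_1}$, $L_{f,u}^{\delta_1}$.

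The second bound follows the same template applied to two flows $\xi'(\cdot),\xi(\cdot)$ issuing from the common state $x$ under inputs $u'$ and $u$. Subtracting their integral identities and invoking the Lipschitz estimate gives, for $e(s)\doteq\|\xi'(s)-\xi(s)\|$, the inequality $e(s)\le sL_{f,u}^\mathrm{c}\|u'-u\|+L_{f,x}^\mathrm{c}\int_0^s e(\tau)\,d\tau$. Grönwall again yields $e(s)\le sL_{f,u}^\mathrm{c}\|u'-u\|\,e^{L_{f,x}^\mathrm{c}s}$, and evaluation at $s=\delta\le\delta_1$ delivers~\eqref{eq:xplusbound}.

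The hard part is that Assumption~\ref{ass:Lipschitz_sys} only supplies the Lipschitz estimate while $\xi(s)$ stays in the tube $\mathbb{X}_{\bar V}\oplus\mathcal{B}(0,\varrho)$ (the input arguments remaining in $\mathcal{B}(\bar p(x),r_p')$ holds by hypothesis). The Grönwall computations above are therefore a priori valid only on the maximal subinterval of $[0,\delta]$ on which $\|\xi(s)-x\|\le\varrho$, so I would close the argument by a bootstrapping/continuity step. First I would note that $\mathbb{X}_{\bar V}$ is bounded — its quadratic lower bound in Assumption~\ref{ass:Lyapunov} gives $\|x\|\le\sqrt{\bar V/a_1}$ — and that $\bar p$ is bounded on it, so $c$ admits a uniform bound $\bar c$ over all admissible $(x,p)$. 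Choosing $\delta_1>0$ small enough that $\bar c\,\delta_1 e^{L_{f,x}^\mathrm{c}\delta_1}\le\varrho$, the provisional estimate $\|\xi(s)-x\|\le cs\,e^{L_{f,x}^\mathrm{c}s}$ shows the trajectory cannot reach the boundary of the tube before time $\delta_1$; hence the Lipschitz bound is valid on all of $[0,\delta]$ for every $\delta\le\delta_1$, and the two estimates hold as stated. This invariance argument, rather than the integral calculus, is the only genuinely delicate step.
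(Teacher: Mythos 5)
The paper states this lemma without proof, importing it from~\cite{Zanelli2021}, and your argument---the integral identity for the sampled flow, the Gr\"onwall--Bellman estimate with nondecreasing forcing for both bounds, and the choice of $\delta_1$ via a continuation argument so that the trajectory stays in the tube $\mathbb{X}_{\bar V}\oplus\mathcal{B}(0,\varrho)$ where Assumption~\ref{ass:Lipschitz_sys} applies---is exactly the standard proof used there, and it is correct. Your bootstrapping step quietly invokes Assumptions~\ref{ass:Lyapunov} and~\ref{ass:Lipschitz_z} (boundedness of $\mathbb{X}_{\bar V}$ via $a_1\|x\|^2\le V(x)\le\bar V$ and boundedness of $\bar p$ on it) beyond the lemma's nominally stated hypothesis, but these are in force everywhere the lemma is applied, so this is a presentational point rather than a gap.
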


\begin{lem}[Forward invariance under perturbed optimizer]\label{lem:fwdx}
	Let Assumptions~\ref{ass:Lyapunov} and~\ref{ass:Lipschitz_sys} hold.
	Then, there exists a positive constant $r_p''\leq r_p'$ such that $f^\delta(x,M_{u,p}p) \in \mathbb{X}_{\bar{V}}$ for all $x \in \mathbb{X}_{\bar{V}}$, all $p \in \mathcal{B}(\bar{p}(x),r_p'')$, and if $\delta \leq \delta_1$. \hfill $\square$
\end{lem}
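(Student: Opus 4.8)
The plan is to compare the perturbed successor state $x^+ \doteq f^\delta(x,M_{u,p}p)$ against the nominal successor $x^+_\mathrm{c} \doteq f^\delta(x,\kappa_\mathrm{c}(x)) = f^\delta(x,M_{u,p}\bar{p}(x))$ produced by the exact NMPC law, and to show that $V$ cannot exceed $\bar{V}$ at $x^+$ once the optimizer error $\|p-\bar{p}(x)\|$ is made small. First I would convert the value-function decrease in Assumption~\ref{ass:Lyapunov} into a contraction for $\sqrt{V}$: combining $V(x^+_\mathrm{c}) \le V(x) - \delta a_3\|x\|^2$ with $\|x\|^2 \ge V(x)/a_2$ gives $V(x^+_\mathrm{c}) \le V(x)(1-\delta a_3/a_2)$, and hence, using $\sqrt{1-y}\le 1-y/2$ (valid since $\delta a_3 \le a_2$ in the admissible sampling range),
\[ \sqrt{V(x^+_\mathrm{c})} \le \sqrt{V(x)}\left(1 - \frac{\delta a_3}{2a_2}\right) \le \sqrt{\bar{V}}\left(1 - \frac{\delta a_3}{2a_2}\right). \]
Because $x^+_\mathrm{c}$ is generated by the exact control law, the decrease condition also yields $x^+_\mathrm{c}\in\mathbb{X}_{\bar{V}}$, so this quantity is well defined.

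Next I would bound the gap between the two successors. Since $p\in\mathcal{B}(\bar{p}(x),r_p'')$ with $r_p''\le r_p'$, and trivially $\bar{p}(x)\in\mathcal{B}(\bar{p}(x),r_p')$, inequality~\eqref{eq:xplusbound} of Lemma~\ref{lem:lipschitzd} applies; together with $\|M_{u,p}\|=1$ it gives $\|x^+ - x^+_\mathrm{c}\| \le \delta L_{f,u}^{\delta_1}\|p-\bar{p}(x)\| \le \delta L_{f,u}^{\delta_1} r_p''$. Choosing $r_p'' \le \hat{r}_x/(\delta_1 L_{f,u}^{\delta_1})$ forces $\|x^+-x^+_\mathrm{c}\|\le\hat{r}_x$, so that $x^+$ — and indeed the whole segment joining it to $x^+_\mathrm{c}$ — lies in $\mathbb{X}_{\bar{V}}\oplus\mathcal{B}(0,\hat{r}_x)\subseteq\mathbb{X}_0$, where $V$ is defined and continuous.

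The delicate point, and the main obstacle, is that the Lipschitz estimate for $\sqrt{V}$ in Assumption~\ref{ass:Lyapunov} is only asserted on $\mathbb{X}_{\bar{V}}$, whereas membership $x^+\in\mathbb{X}_{\bar{V}}$ is precisely what we wish to establish; a direct application would be circular. I would break this circularity with an intermediate-value argument. Suppose, for contradiction, that $V(x^+)>\bar{V}$. Parametrizing $x(\theta)\doteq(1-\theta)x^+_\mathrm{c}+\theta x^+$ for $\theta\in[0,1]$, continuity of $V$ together with $V(x^+_\mathrm{c})\le\bar{V}<V(x^+)$ yields a $\theta^\star\in(0,1]$ with $V(x(\theta^\star))=\bar{V}$, whence $x(\theta^\star)\in\mathbb{X}_{\bar{V}}$ and $\|x(\theta^\star)-x^+_\mathrm{c}\|=\theta^\star\|x^+-x^+_\mathrm{c}\|\le\|x^+-x^+_\mathrm{c}\|$. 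Now both $x^+_\mathrm{c}$ and $x(\theta^\star)$ lie in $\mathbb{X}_{\bar{V}}$, so the $\sqrt{V}$-Lipschitz bound is legitimate and, combined with the two estimates above,
\[ \sqrt{\bar{V}} = \sqrt{V(x(\theta^\star))} \le \sqrt{V(x^+_\mathrm{c})} + L_{V,x}\|x^+-x^+_\mathrm{c}\| \le \sqrt{\bar{V}}\left(1-\frac{\delta a_3}{2a_2}\right) + L_{V,x}\delta L_{f,u}^{\delta_1} r_p''. \]
This rearranges to $r_p'' \ge a_3\sqrt{\bar{V}}/(2a_2 L_{V,x}L_{f,u}^{\delta_1})$. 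Fixing
\[ r_p'' \doteq \min\left\{\, r_p',\; \frac{\hat{r}_x}{\delta_1 L_{f,u}^{\delta_1}},\; \frac{a_3\sqrt{\bar{V}}}{4a_2 L_{V,x}L_{f,u}^{\delta_1}} \,\right\} \]
makes this inequality impossible, contradicting $V(x^+)>\bar{V}$; therefore $x^+\in\mathbb{X}_{\bar{V}}$, which is the claim. Once the $\sqrt{V}$-contraction and the successor-gap bound are in place, everything else is routine, and the only genuinely careful step is the boundary/IVT argument that licenses the Lipschitz estimate at the as-yet-unverified point $x^+$.
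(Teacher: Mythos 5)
Your proof is correct, and while it opens exactly as the paper does, it closes the argument by a genuinely different mechanism. Both proofs compare the perturbed successor $x_+ = f^\delta(x,M_{u,p}p)$ with the nominal one $x_+^\star = f^\delta(x,M_{u,p}\bar p(x))$, observe $V(x_+^\star) < \bar V$ from the Lyapunov decrease, and bound the gap $\|x_+ - x_+^\star\| \leq \delta L_{f,u}^{\delta_1} r_p''$ via~\eqref{eq:xplusbound}. The paper then finishes qualitatively: it extracts some margin $\epsilon_V > 0$ with $V(x_+^\star) + \epsilon_V \leq \bar V$ and invokes bare continuity of $V$ on $\mathbb{X}_0$ (citing~\cite[App.~A.11]{Rawlings2019}) to get $\epsilon_x>0$ with $|V(x')-V(x_+^\star)|<\epsilon_V$ for $\|x'-x_+^\star\|<\epsilon_x$, then shrinks $r_p''$ ``sufficiently small'' --- an existential argument whose constants are not exhibited and whose uniformity in $x$ (the paper's $\epsilon_V$ as written is $\delta a_3\|x\|^2$, which degenerates as $x\to 0$) is left implicit. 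You instead make the margin quantitative and uniform from the start: the contraction $\sqrt{V(x_+^\star)} \leq \sqrt{\bar V}\,(1-\delta a_3/(2a_2))$, obtained from $\|x\|^2 \geq V(x)/a_2$, is precisely the uniform version of the paper's $\epsilon_V$, and combining it with the $L_{V,x}$-Lipschitz bound on $\sqrt V$ yields an explicit radius $r_p''$ that is moreover independent of $\delta$, since $\delta$ cancels in the final inequality --- something the paper's proof does not deliver. Your intermediate-value detour to the level-crossing point $x(\theta^\star)$ is the genuinely careful step: Assumption~\ref{ass:Lyapunov} grants the $\sqrt V$-Lipschitz estimate only on $\mathbb{X}_{\bar V}$, so applying it directly at the unverified point $x_+$ would be circular, and your argument correctly secures continuity of $V$ along the segment by keeping it inside $\mathbb{X}_{\bar V} \oplus \mathcal{B}(0,\hat r_x) \subseteq \mathbb{X}_0$ through the choice $r_p'' \leq \hat r_x/(\delta_1 L_{f,u}^{\delta_1})$. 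The trade-off: your route uses $L_{V,x}$ (which the paper's proof of this lemma does not need, continuity alone sufficing), but since $L_{V,x}$ is already part of Assumption~\ref{ass:Lyapunov}, nothing beyond the stated hypotheses is consumed, and in exchange you obtain a constructive, uniform $r_p''$ of the kind that would be needed to instantiate the downstream bounds in~\eqref{eq:RTIsuff} explicitly.
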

\begin{proof}
	We define the shorthands $x_+^\star \doteq f^\delta(x,M_{u,p}\bar{p}(x))$ and $x_+ \doteq f^\delta(x,M_{u,p}{p})$.
	Let $r_p'' \leq r_p'$ with $r_p'$ from Assumption~\ref{ass:Lipschitz_sys}.
	By the Lyapunov decrease condition in Assumption~\ref{ass:Lyapunov}, we have that
	$V(x_+^\star) \leq V(x) - \delta a_3 \|x\|^2$.
	If $x = 0$, then $V(x) = 0$ and thus $V(x_+^\star) = 0 < \bar{V}$.
	If $x \neq 0$, then $V(x_+^\star) \leq \bar{V} - \delta a_3 \|x\|^2 < \bar{V}$.
	Thus, there exists a constant $\epsilon_V > 0$ such that $V(x_+^\star) + \epsilon_V \leq \bar{V}$.
	Recall that, by Assumption~\ref{ass:Lyapunov}, $\mathbb{X}_{\bar{V}}$ lies in the interior of $\mathbb{X}_0$ and that $V$ is continuous over $\mathbb{X}_0$.
	Thus, there exists a constant $\epsilon_x > 0$ such that $| V(x') - V(x_+^\star) | < \epsilon_V$ for all $\|x' - x_+^\star\| < \varepsilon_x$~\cite[App. A.11]{Rawlings2019}.
	By inserting $\| p - \bar{p}(x) \| \leq r_p''$ into~\eqref{eq:xplusbound}, we have that $\| x_+ - x_+^\star \| \leq \delta L_{f,u}^{\delta_1} r_p''$.
	For $r_p'' > 0$ sufficiently small, $\|x_+ - x_+^\star \| < \epsilon_x$ and so $V(x_+) < V(x_+^\star) + \epsilon_V \leq \bar{V}$.
	By the definition of the level set $\mathbb{X}_{\bar{V}}$, we obtain that $x_+ \in \mathbb{X}_{\bar{V}}$, which concludes the proof.
\end{proof}

Define the shorthands $x_t \doteq x(t)$, $p_{t} \doteq p^{k_\mathrm{max}}(t)$, and $p^\star_t \doteq \bar{p}(x(t))$, and similarly for $\cdot_{t+1}$.
We next adapt~\cite[Lem. 11]{Zanelli2021} to the q-linear optimizer convergence Assumption~\ref{ass:qlin}.
\begin{lem}[Contraction]\label{lem:optimizer-contraction}
	Let Assumptions~\ref{ass:Lyapunov}--\ref{ass:Lipschitz_sys} hold and consider the system-optimizer dynamics~\eqref{eq:sysoptdyn}.
	Then, there exist positive constants $r_p \leq \min\{\hat{r}_p,r_p''\}$ and $r_x 
	\leq \hat{r}_x$ such that, for all $x_t \in \mathbb{X}_{\bar{V}}$ and all $\|p_t - p^\star_t\| \leq r_p$, the following holds.
	If $\delta \leq \delta_1$ and if $\|x_{t+1} - x_t\| \leq r_x$, then
	\begin{align}\label{eq:optimizer-contraction}
	\| p_{t+1} - p^\star_{t+1} \| \leq a_p \| p_t - p^\star_t \| + a_p L_{p,x}  \| x_{t+1} - x_t \|.
	\end{align}~\hfill$\square$
\end{lem}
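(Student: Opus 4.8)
The plan is to exploit the warm-start structure of the system--optimizer dynamics~\eqref{eq:sysoptdyn}: at time $t+1$ the optimizer is initialized at $p^0(t+1) = p_t$ and then performs $k_\mathrm{max}$ iterations on the OCP associated with $x_{t+1}$, so that $p_{t+1} = p^{k_\mathrm{max}}(t+1)$. If this warm start lies inside the q-linear convergence ball $\mathcal{B}(p^\star_{t+1},\hat{r}_p)$ of the OCP at $x_{t+1}$, then Assumption~\ref{ass:qlin} applied $k_\mathrm{max}$ times gives $\|p_{t+1} - p^\star_{t+1}\| \leq a_p^{k_\mathrm{max}} \|p_t - p^\star_{t+1}\| \leq a_p \|p_t - p^\star_{t+1}\|$, where the final inequality uses $a_p < 1$ and $k_\mathrm{max} \geq 1$. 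Splitting $\|p_t - p^\star_{t+1}\| \leq \|p_t - p^\star_t\| + \|p^\star_t - p^\star_{t+1}\|$ and bounding the last term by the Lipschitz controller property then yields~\eqref{eq:optimizer-contraction}.

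First I would fix the radii. I would take $r_p \leq \min\{\hat{r}_p, r_p''\}$ and $r_x \leq \hat{r}_x$, chosen small enough that $r_p + L_{p,x} r_x \leq \hat{r}_p$ (e.g.\ $r_p \leq \min\{\hat{r}_p/2, r_p''\}$ and $r_x \leq \min\{\hat{r}_x, \hat{r}_p/(2 L_{p,x})\}$). Two feasibility checks are then required before Assumption~\ref{ass:qlin} can be invoked. The first is $x_{t+1} \in \mathbb{X}_{\bar{V}}$, which is needed so that $p^\star_{t+1} = \bar{p}(x_{t+1})$ is well defined and the convergence assumption applies. Since $x_t \in \mathbb{X}_{\bar{V}}$, $\|p_t - p^\star_t\| \leq r_p \leq r_p''$, $\delta \leq \delta_1$, and $x_{t+1} = f^\delta(x_t, M_{u,p} p_t)$, this is exactly the forward invariance established in Lemma~\ref{lem:fwdx}.

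The second check is that the warm start lies in the convergence ball, i.e.\ $\|p_t - p^\star_{t+1}\| \leq \hat{r}_p$. Here I would use $\|p_t - p^\star_{t+1}\| \leq \|p_t - p^\star_t\| + \|p^\star_t - p^\star_{t+1}\|$ and bound the second summand via Assumption~\ref{ass:Lipschitz_z}: because $x_t \in \mathbb{X}_{\bar{V}}$ and $\|x_{t+1} - x_t\| \leq r_x \leq \hat{r}_x$ so that $x_{t+1} \in \mathcal{B}(x_t, \hat{r}_x)$, we obtain $\|\bar{p}(x_{t+1}) - \bar{p}(x_t)\| \leq L_{p,x} \|x_{t+1} - x_t\|$. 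Hence $\|p_t - p^\star_{t+1}\| \leq r_p + L_{p,x} r_x \leq \hat{r}_p$ by the choice of radii, and the same bound $\|p_t - p^\star_{t+1}\| \leq \|p_t - p^\star_t\| + L_{p,x}\|x_{t+1} - x_t\|$ is reused in the final step.

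With both checks in place, applying Assumption~\ref{ass:qlin} and substituting the triangle-inequality bound gives $\|p_{t+1} - p^\star_{t+1}\| \leq a_p\bigl(\|p_t - p^\star_t\| + L_{p,x}\|x_{t+1} - x_t\|\bigr)$, which is~\eqref{eq:optimizer-contraction}. I expect the main obstacle to be not any individual estimate but the consistent coordination of the radii: the warm start $p_t$ is close to the \emph{previous} optimum $p^\star_t$, whereas convergence is measured against the \emph{shifted} optimum $p^\star_{t+1}$, and one must simultaneously keep $x_{t+1}$ inside the sublevel set $\mathbb{X}_{\bar{V}}$ (forward invariance) and ensure that the state-induced shift $\|p^\star_t - p^\star_{t+1}\|$ does not eject the warm start from $\mathcal{B}(p^\star_{t+1},\hat{r}_p)$.
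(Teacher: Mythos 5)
Your proposal is correct and follows essentially the same route as the paper's proof: forward invariance of $\mathbb{X}_{\bar{V}}$ via Lemma~\ref{lem:fwdx}, the Lipschitz bound $\|p^\star_{t+1}-p^\star_t\| \leq L_{p,x}\|x_{t+1}-x_t\|$ from Assumption~\ref{ass:Lipschitz_z}, a radii choice ensuring $r_p + L_{p,x} r_x \leq \hat{r}_p$ so the warm start stays in the q-linear convergence ball, and then Assumption~\ref{ass:qlin} with $(a_p)^{k_\mathrm{max}} \leq a_p$ combined with the triangle inequality. The only cosmetic difference is your explicit radii split ($\hat{r}_p/2$ each) versus the paper's choice $r_p \leq \min\{r_p'', \hat{r}_p - L_{p,x} r_x\}$ with $r_x < \min\{\hat{r}_x, \hat{r}_p/L_{p,x}\}$, which serve the same purpose.
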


\begin{proof}
		Since $r_p \leq r_p''$, Lemma~\ref{lem:fwdx} yields that $x_{t+1} = f^\delta(x_t,M_{u,p}p_t) \in \mathbb{X}_{\bar{V}}$.
		Assumption~\ref{ass:Lipschitz_z} on the Lipschitz continuity of the KKT point thus states that
		$$
		\| p^\star_{t+1} - p^\star_t\| \leq L_{p,x} \|x_{t+1} - x_t \| \leq L_{p,x}r_x.
		$$ 
		By further setting $r_p \leq \min\{r_p'',\hat{r}_p - L_{p,x}r_x\}$ with ${r_x < \min\{\hat{r}_x,\hat{r}_p/L_{p,x}\}}$, we obtain
		\begin{align*}
		\|p_t - p^\star_{t+1}\| &\leq \|p_t - p^\star_{t}\| + \| p^\star_{t+1} - p^\star_t\|\\
		&\leq r_p + L_{p,x}r_x \leq \hat{r}_p.
		\end{align*}
		Thus, $\|p_t - p^\star_{t+1}\|$ lies inside the q-linear convergence region of the optimizer.
		Assumptions~\ref{ass:Lipschitz_z} and~\ref{ass:qlin} hence yield
		\begin{align*}
		\| p_{t+1} - p^\star_{t+1} \| &\leq  (a_p)^{k_\mathrm{max}} \|p_t - p_{t+1}^\star\| \\
		&\leq a_p \|p_t - p_{t+1}^\star\| \\
		&\leq a_p \|p_t - p_{t}^\star\| + a_p \| p^\star_{t+1} - p^\star_t\|\\
		&\leq a_p \|p_t - p_{t}^\star\| + a_p L_{p,x} \| x_{t+1} - x_t\|,
		\end{align*} where we have used that $(a_p)^{k_\mathrm{max}} \leq a_p < 1$. This finishes the proof.
\end{proof}

Furthermore, define the constants $\eta \doteq L_{f,x}^{\delta_1} + L_{f,u}^{\delta_1} L_{p,x}$, $r_{\bar{V}} \doteq (\bar{V}/a_1)^{1/2}$, 
	\begin{equation*}
	\delta_3 \doteq \min \left\{\delta,\delta_1,\frac{r_x}{\eta r_{\bar{V}} + L_{f,u}^{\delta_1} r_p } , \frac{ r_p (1- a_p ) }{ L_{p,x} a_p \left( L_{f,u}^{\delta_1} r_p + \eta r_{\bar{V}} \right)  } \right\},
	\end{equation*} 
	$\kappa \hspace*{-0.5mm}\doteq\hspace*{-0.5mm} a_p \left( 1\hspace*{-0.5mm}+\hspace*{-0.5mm} \delta_3 L_{p,x} L_{f,u}^{\delta_1} \right)$,
	$L_V \doteq 2 \bar{V}^{1/2} L_{V,x}$,
	${\bar{a} \doteq a_3/a_2}$,
	$L_e \doteq L_V L_{f,u}^{\delta_1}$,
	$L_{V,p} \doteq L_{f,u}^\mathrm{c} e^{\delta_1 L_{f,x}^\mathrm{c}} L_{V,x}$, and
	${\beta' \doteq ( \bar{a} \sqrt{a_1} )/(4 L_{p,x} a_p \eta)}$.

The sampling interval $\bar{\delta}$ and optimizer initialization radius $\tilde{r}_p$ sufficient for closed-loop stability read~\cite{Zanelli2021}
\begin{equation}\label{eq:RTIsuff}
\bar{\delta} \doteq \min\{\delta_3,\delta_4',\delta_5\} \quad \text{and} \quad \tilde{r}_p \doteq \min\left\{ r_p, \bar{a} \bar{V} / L_e \right\},
\end{equation} where
$$
\delta_4' \doteq \frac{(1 - \kappa) \tilde{r}_p \sqrt{a_1}}{\bar{V}^{1/2} L_{p,x} a_p \eta}, \quad \text{and} \quad \delta_5 \doteq \frac{ \beta' (1-\kappa) }{L_{V,p}}.
$$

\section{NLP Formulation and Decentralized ADMM}\label{app:example}

This section presents an example to demonstrate the reformulation of OCP~\eqref{ocp} as a partially separable NLP~\eqref{eq:sepForm} and the decentralization of the ADMM averaging step.
\begin{example}[OCP as partially separable NLP~\cite{Stomberg2022}]\label{ex:coupling}
	Consider a set $\mathcal{S} = \{1,2\}$ of subsystems with $x_i,u_i\in \mathbb{R}$ for all $i \in \mathcal{S}$ governed by the dynamics
	\begin{align*}
	x_1(t+1) &= x_1(t) + u_1(t), \quad x_1(0) = x_{1,0}\\
	x_2(t+1) &= x_1(t) + x_2(t), \quad x_2(0) = x_{2,0}.
	\end{align*} 
	For a horizon $N = 1$, the decision variables in OCP~\eqref{ocp} then read $\bar{\boldsymbol{x}} \doteq (\bar{x}_1(0),\bar{x}_1(1) , \bar{x}_2(0),\bar{x}_2(1))$ and $\bar{\boldsymbol{u}} \doteq \bar{u}_1(0)$.
	To obtain a partially separable NLP, we define the state copy $\bar{v}_2 \doteq \bar{x}_1$ and the decision variables
	$z_1 \doteq (\bar{x}_1(0),\bar{x}_1(1),\bar{u}_1(0))$ and $z_2 \doteq (\bar{x}_2(0),\bar{x}_2(1),\bar{v}_2(0))$.
	The dynamics can be reformulated as subsystem constraints~\eqref{eq:sepProbGi},
	\begin{align*}
	g_1(z_1) &\doteq \begin{bmatrix} \bar{x}_1(0) + \bar{u}_1(0) - \bar{x}_1(1) \\ \bar{x}_1(0) - x_{1,0}\end{bmatrix}\phantom{.} \\ 
	g_2(z_2) &\doteq \begin{bmatrix} \bar{x}_2(0) + \bar{v}_2(0) - \bar{x}_2(1) \\ \bar{x}_2(0) - x_{2,0} \end{bmatrix}.
	\end{align*} The constraints~\eqref{eq:consConstr} couple original and copied states,
	\begin{align*}
		\underbrace{\begin{bmatrix} 1 & 0 & 0 \end{bmatrix}}_{\doteq E_1} z_1 + \underbrace{\begin{bmatrix} 0 & 0 & -1 \end{bmatrix}}_{\doteq E_2} z_2  = 0.
	\end{align*} 
	\end{example}\vspace*{-0.2cm}~\hfill~$\square$
	
	Example~\ref{ex:coupling} reformulates coupled dynamics. Coupled costs~\eqref{docp:obj} and constraints~\eqref{docp:xixj} can be treated similarly.	
	The reformulation allows to decentralize ADMM as follows. 
	The sparsity in $E_1$ and $E_2$ together with $c = 0$ allow to view NLP~\eqref{eq:sepForm} as a so-called consensus problem, where each subsystem optimizes over a selection of the centralized decision variables $(\bar{\boldsymbol{x}},\bar{\boldsymbol{u}})$.
	Specifically for Example~\ref{ex:coupling}, Subsystem~1 optimizes over $(\bar{x}_1(0),\bar{x}_1(1),\bar{u}_1(0))$ and Subsystem~2 optimizes over $(\bar{x}_1(0),\bar{x}_2(0),\bar{x}_2(1))$.
	A derivation of the decentralized implementation of Step~\eqref{eq:admm2} for general consensus problems is provided in~\cite[Ch. 7.2]{Boyd2011}.
	To make a closer connection to NLP~\eqref{eq:sepForm}, recall from the proof of Lemma~\ref{lem:admmConv} that the $z$-update in ADMM is given by
	$
	z^{l+1} = M_{\mathrm{avg}} (y^l + \gamma^l/\rho).
	$
	For Example~\ref{ex:coupling}, the ADMM averaging matrix $M_\mathrm{avg}$ reads
	$$
	M_\mathrm{avg} = I - E^\top (E E^\top)^{-1} E = \left[
	\begin{smallmatrix} 0.5 & 0 & 0 & 0 & 0 & 0.5 \\
	 0 & 1 & 0 & 0 & 0 & 0 \\
	 0 & 0 & 1 & 0 & 0 & 0\\
	 0 & 0 & 0 & 1 & 0 & 0\\
	 0 & 0 & 0 & 0 & 1 & 0\\
	 0.5 & 0 & 0 & 0 & 0 & 0.5  \end{smallmatrix} \right].
	$$
	That is, the update for the coupled variable $\bar{x}_1(0)$ is
	\begin{equation*}\label{eq:ex-avg}
	[z_1]_1^{l+1} = [z_2]_3^{l+1} = \frac{[y_1]_1^{l+1}  +  [y_2]_3^{l+1}  + ([\gamma_1]_1^l + [\gamma_2]_3^l) /\rho}{2}.
	\end{equation*}
	A common approach to decentralize ADMM in DMPC is to send the variables on the right hand side of the above average to the subsystem with the original state, i.e., Subsystem~1 in Example~\ref{ex:coupling}.
	Then, the average is evaluated and the result is sent to the subsystems with the copied states, i.e., Subsystem~2 in Example~\ref{ex:coupling}. For more details, see~\cite{Bestler2019,Stomberg2023,Stomberg2025a}.

\renewcommand*{\bibfont}{\footnotesize}
\printbibliography

\begin{IEEEbiography}
	[{\includegraphics[width=1in,height=1.25in,clip,keepaspectratio]{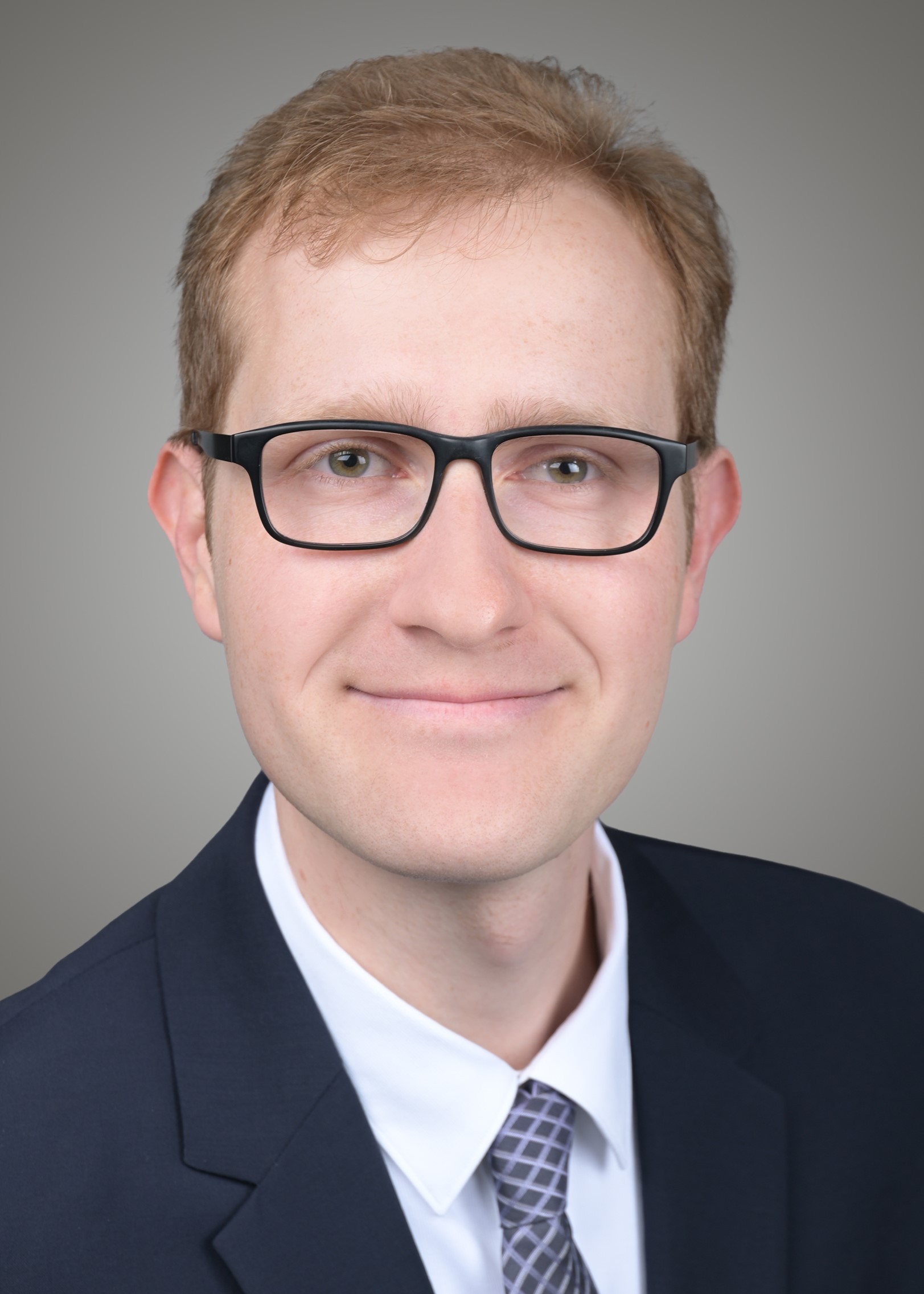}}]{Gösta Stomberg} (Member, IEEE) received the M.Sc. degree in mechanical and computational engineering from the Technical University of Darmstadt, Germany, in 2019. 
	Since 2020, he has been a PhD Student, first at TU Dortmund University, Germany, and, since 2024, at Hamburg University of Technology, Germany.
	His research interests include distributed optimization and model predictive control.
\end{IEEEbiography}

\begin{IEEEbiography}
	[{\includegraphics[width=1in,height=1.25in,clip,keepaspectratio]{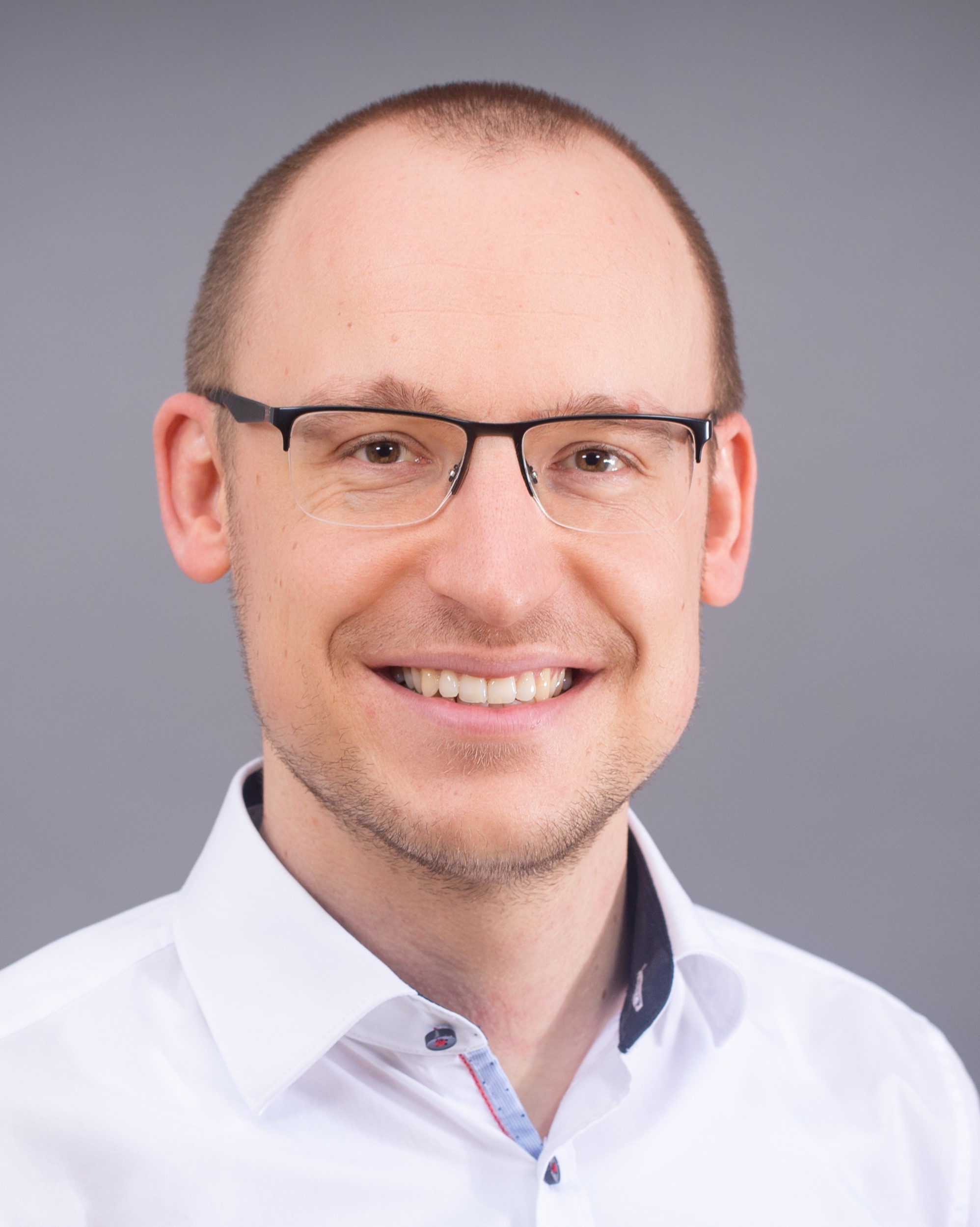}}]{Alexander Engelmann} (Member, IEEE) received the M.Sc. degree in electrical engineering and information technology and the Ph.D. degree in informatics from the Karlsruhe Institute of Technology, Germany, in 2016 and 2020, respectively. From 2020 to 2024, he was a postdoctoral researcher at the Institute of Energy Systems, Energy Efficiency and Energy Economics at TU Dortmund University, Germany. In 2024, he joined logarithmo GmbH, Dortmund, Germany, where he focuses on large scale optimization for power system operation in an industrial context. 
\end{IEEEbiography}

\begin{IEEEbiography}
	[{\includegraphics[width=1in,height=1.25in,clip,keepaspectratio]{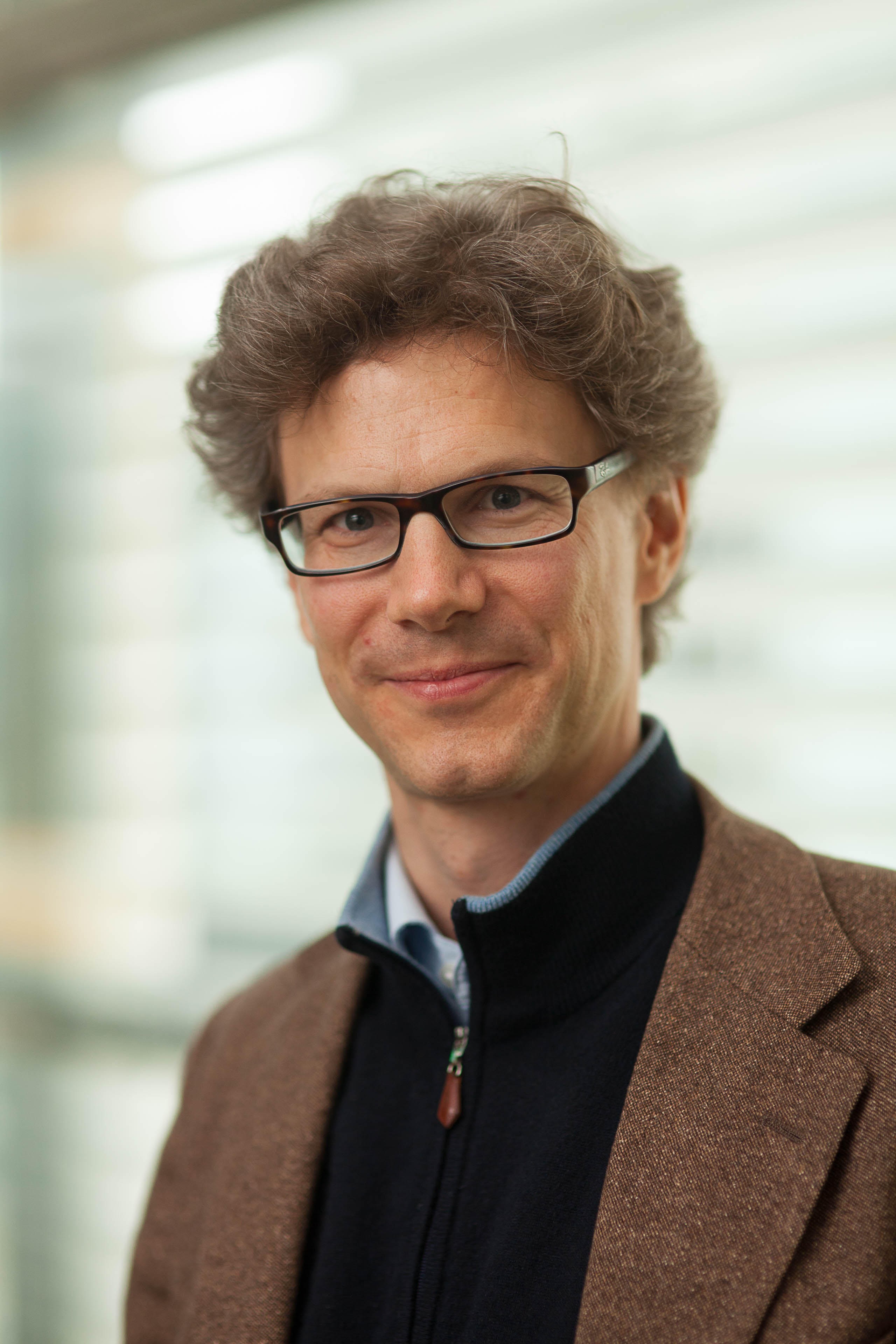}}]{Moritz Diehl} studied physics and mathematics at Heidelberg, Germany, and Cambridge, UK, from 1993-1999, and received his Ph.D. degree in Scientific Computing from Heidelberg University in 2001. From 2006 to 2013, he was a professor at KU Leuven University, Belgium, and served as the Principal Investigator of KU Leuven's Optimization in Engineering Center OPTEC.  Since 2013, he is full professor at the University of Freiburg, Germany, where he heads the  Systems Control and Optimization Laboratory, in the Department of Microsystems Engineering (IMTEK), and is also affiliated to the Department of Mathematics. Since 2023, he serves as managing director of Freiburg University’s Center for Renewable Energy (ZEE). His research interests are in optimization and control, spanning from numerical method and software development to applications in different branches of engineering, with a focus on embedded systems and on renewable energy systems.	
\end{IEEEbiography}

\begin{IEEEbiography}
	[{\includegraphics[width=1in,height=1.25in,clip,keepaspectratio]{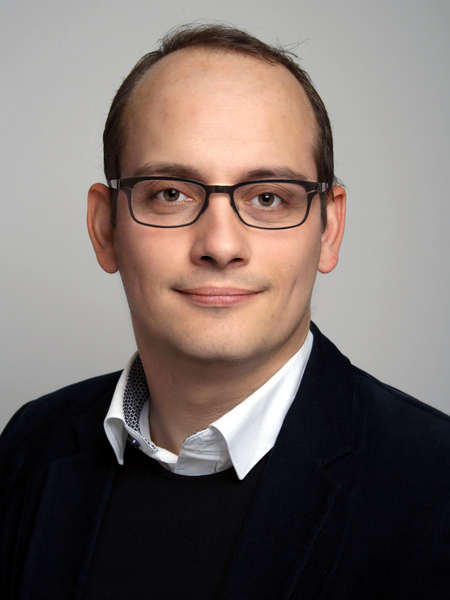}}]{Timm Faulwasser} (Senior Member, IEEE) is a full professor in the School of Electrical Engineering, Computer Science and Mathematics at Hamburg University of Technology, while before he held a professorship at TU Dortmund University. He has studied Engineering Cybernetics with minor in philosophy at the University of Stuttgart (2000-2006). After doctoral studies in the International Max Planck Research School for Analysis, Design and Optimization in Chemical and Biochemical Process Engineering Magdeburg he obtained his PhD from the Department of Electrical Engineering and Information Technology at Otto-von-Guericke-University Magdeburg, Germany in 2012. He has been postdoctoral researcher at École Polytechnique Fédérale de Lausanne (2013-2016) and senior researcher at Karlsruhe Institute of Technology (2015-2019). Previously, Timm was a member of the IEEE-CSS Conference Editorial Board and associate editor of the European Journal of Control. Currently, he serves as associate editor for the IEEE Transactions on Automatic Control, the IEEE Control System Letters, and Mathematics of Control Systems and Signals. He received the 2021-2023 Automatica Paper Prize and the European Control Award 2025. His current research interests are optimization-based and data-driven control of stochastic and nonlinear systems as well as systems and control approaches to learning. 
\end{IEEEbiography}

\end{document}